\newcommand{\R}{\textnormal{I\kern-0.21emR}}
\newcommand{\N}{\textnormal{I\kern-0.21emN}}
\renewcommand{\geq}{\geqslant}
\renewcommand{\leq}{\leqslant}
\def\H{{\mathcal H^{d-1}}}
\def\e{{\varepsilon}}
\def\Vol{{\operatorname{Vol}}}
\def\YYint#1#2#3{{\setbox0=\hbox{$#1{#2#3}{\iint}$}
    \vcenter{\hbox{$#2#3$}}\kern-.51\wd0}}
\newtheorem*{theorem*}{Theorem}
\newtheorem{theorem}{Theorem}
\newtheorem{material}{material}
\newtheorem{proposition}[material]{Proposition}
\newtheorem{definition}[material]{Definition}
\newtheorem{lemma}[material]{Lemma}
\newtheorem{remark}[material]{Remark}
\def\O{{\Omega}}
\def\b{{\beta^*}}
\def\n{{\nabla}}
\def\p{{\varphi}}
\def\ell{{\operatorname{ell}}}
 \newcommandx{\christian}[2][1=]{\todo[linecolor=red,backgroundcolor=red!25,bordercolor=red,#1]{#2}}
 \newcommandx{\laura}[2][1=]{\todo[linecolor=blue,backgroundcolor=blue!25,bordercolor=blue,#1]{#2}}
 \newcommandx{\info}[2][1=]{\todo[linecolor=green,backgroundcolor=green!25,bordercolor=green,#1]{#2}}
 \newcommandx{\improvement}[2][1=]{\todo[linecolor=yellow,backgroundcolor=yellow!25,bordercolor=yellow,#1]{#2}}
  \newcommandx{\biblio}[2][1=]{\todo[linecolor=blue,backgroundcolor=magenta!25,bordercolor=blue,#1]{#2}}
 \numberwithin{equation}{section}
\begin{document}
\title{Qualitative analysis of optimisation problems with respect to non-constant Robin coefficients}


\author{Idriss Mazari\footnote{CEREMADE, UMR CNRS 7534, Universit\'e Paris-Dauphine, Universit\'e PSL, Place du Mar\'echal De Lattre De Tassigny, 75775 Paris cedex 16, France, (\texttt{mazari@ceremade.dauphine.fr})}
\and Yannick Privat\footnote{IRMA, Universit\'e de Strasbourg, CNRS UMR 7501, Inria, 7 rue Ren\'e Descartes, 67084 Strasbourg, France ({\tt yannick.privat@unistra.fr}).}\textsuperscript{~~}\footnote{Institut Universitaire de France (IUF).} }
\date{\today}

\maketitle

\begin{abstract}
Following recent interest in the qualitative analysis of some optimal control and shape optimisation problems, we provide in this article a detailed study of the optimisation of Robin boundary conditions in PDE constrained calculus of variations. Our main model consists of an elliptic PDE of the form $-\Delta u_\beta=f(x,u_\beta)$ endowed with the Robin boundary conditions $\partial_\nu u_\beta+\beta(x)u_\beta=0$. The optimisation variable is the function $\beta$, which is assumed to take values between 0 and 1 and to have a fixed integral. Two types of criteria are under consideration: the first one is non-energetic criteria. In other words, we aim at optimising functionals of the form $\mathcal J(\beta)=\int_{\O\text{ or }\partial \O}j(u_\beta)$. We prove that, depending on the monotonicity of the function $j$, the optimisers may be of \emph{bang-bang} type (in other words, the optimisers write $\mathds 1_\Gamma$ for some measurable subset $\Gamma$ of $\partial \O$) or, on the contrary, that they may only take values strictly between 0 and 1. This has consequence for a related shape optimisation problem, in which one tries to find where on the boundary Neumann ($\partial_\nu u=0$ ) and  constant Robin conditions ($\partial_\nu u+u=0$) should be placed in order to optimise criteria.  The proofs for this first case rely on new fine oscillatory techniques, used in combination with optimality conditions. We then investigate the case of compliance-type functionals. For such energetic functionals, we give an in-depth analysis and even some explicit characterisation of optimal $\beta^*$.
\end{abstract}

\noindent\textbf{Keywords:} Elliptic boundary values problems, Robin boundary conditions, Calculus of variations, Shape optimisation, Bilinear optimal control problems, Qualitative analysis of optimisation problems

\medskip

\noindent\textbf{AMS classification:} 49J15, 49Q10.

\paragraph{Acknowledgment.} This work was  partially funded by the French ANR Project ANR-18-CE40-0013 - SHAPO on Shape Optimization and by the Project "Analysis and simulation of optimal shapes - Application to lifesciences" of the Paris City Hall.


\section{Introduction}
\subsection{Scope of the article, informal presentation of the problem}

\subsubsection{Informal presentation of the problem}
The goal of this article is to provide a theoretical analysis of a class of PDE constrained optimisation problems which arise in many fields (for instance, in automation, in physics or in mathematical biology), and in which  the aim is to minimise or maximise a certain criteria by acting on the coefficients of the Robin boundary conditions. 

More specifically, we are working with \emph{heterogeneous Robin boundary conditions}, in the following sense: the state equation of the phenomenon is supplemented, on the boundary of the domain, with a condition of the form 
$$\frac{\partial u}{\partial \nu}(x)+\beta(x)u(x)=0,$$ where $\beta$ is a non-negative function on the boundary. Our goal is to optimise certain criteria with respect to $\beta$, under some natural constraints.

\paragraph{Context} We provide bibliographical references in section \ref{Se:Bib} of this introduction, but let us point out that such problems have been the topic of a wide research activity.  For instance, in \cite{BUCUR2017451,Bucur2017}, several aspects of the optimisation of the natural energy of the underlying PDE or of some eigenvalues were tackled. Similarly, this type of question is very natural in the context of thermal insulation. In this case, a relevant query is to find the best way to coat a domain with an insulant in order to optimise certain criteria. This is the point of view chosen, for example, in \cite{BUCUR2017451,Bucur2017,Pietra2020}. Other authors have studied this problem in parabolic models, with applications to fluid dynamics \cite{MR3009728}, or in hyperbolic problems \cite{MR1623313}. Let us finally mention that, in many of the aforementioned cases, the functionals to be optimised either derive from the natural energy of the PDE or are of "tracking-type" (\emph{i.e.} the aim is to minimise the distance of the state to a reference state). However, many relevant optimisation problems do not fall in either category. This is for example the case in spatial ecology. One may consider, following \cite{BaiHeLi,LouNagaharaYanagida,MNP2021,MNP,MRBSIAP,NagaharaYanagida}, the problem of maximising the population size in logistic models: how should one design the boundary of a domain in order to optimise the total population inside the domain? Although most of our analysis is, in the main proofs of this article, detailed in the case of linear models, we also provide in section \ref{Se:NL} some extension to non-linear models.

We consider and analyse fairly general functionals, with a strong emphasis on the \emph{qualitative properties} of optimisers. As we shall see, these properties are closely related to \emph{existence results for shape optimisation problems}. To carry out our proofs, we introduce an oscillatory method which, although reminiscent of the one we introduced recently in \cite{MNP2021}, requires fine tuning to obtain our results. 

One of our main contributions is the analysis of the influence of the type of functionals we wish to optimise (for instance,  the \emph{monotonicity of the functional} is crucial in the forthcoming analysis) on qualitative results. Let us mention that we consider two main types of functionals: energetic ones (in other words, functionals that are equal, up to a multiplicative constant, to the natural energy of the state equation), in which we may achieve an explicit characterisation of optimisers, and non-energetic ones. The latter case exhibits very different qualitative features; this is the main topic under consideration here.

Furthermore, let us underline that, from a mathematical perspective, our contributions can be read through the lens of \emph{bilinear optimal control problems} set on the boundary. In this setting, we see $\beta$ as the control. Likewise, such bilinear optimal control problems have been an active topic of research in the past years, and are not yet fully understood. We refer once again to section \ref{Se:Bib} of the introduction for further discussion of recent works in this field.



 \paragraph{Paradigmatic formulation of our problem and motivation}
The most general version of our problem problem reads as follows: let $\O$ be a regular enough domain in $\R^d$ and let, for any measurable subset $\Gamma\subset \partial \O$, $u^\Gamma_\alpha$ be the unique solution of 
\begin{equation}\label{Eq:MainIntro}\begin{cases}
 -\nabla \cdot(A(x)\nabla u^\Gamma_\alpha(x))=g_0(x,u^\Gamma_\alpha(x))& x\in\O 
 \\\partial_\nu u^\Gamma_\alpha(x)+\alpha \mathds 1_\Gamma(x) u^\Gamma_\alpha(x)=0& x\in\partial\O. 
 \end{cases}
 \end{equation}
 In this formulation, $\alpha>0$ is a fixed parameter, $A$ is a matrix assumed to be uniformly elliptic and $g_0$ is a given non-linearity.
 
   A possible interpretation of this equation is that it is an approximation of a mixed problem of the type
 \begin{equation}\label{Eq:MainIntro2}\begin{cases}
 -\nabla \cdot(A(x)\nabla v^\Gamma(x))=g_0(x,v^\Gamma(x))& x \in \O, 
 \\\partial_\nu v^\Gamma(x)=0& x \in \partial \O\backslash \Gamma, 
 \\ v^\Gamma(x)=0&x \in \Gamma.
 \end{cases}
 \end{equation}
 Indeed, under adequate assumptions, the solution to \eqref{Eq:MainIntro} converges in some sense to the one of \eqref{Eq:MainIntro2} as $\alpha\to +\infty$. We refer to Appendix~\ref{append_vGamma} for additional explanation in the case where $g_0=g_0(x)$. 
 
As a consequence,  optimising criteria involving $u^\Gamma_\alpha$ with respect to $\Gamma$ is closely related to optimising criteria  involving $v^\Gamma$ with respect to $\Gamma$. 
\begin{remark}[Comment on the methods used] Let us stress the following fact: while it is plausible that the optimisers of the problems involving $u_\alpha^\Gamma$ and of the problems involving $v^\Gamma$ have the same qualitative features, we believe that the tools necessary in order to analyse them are fundamentally different, as the proper convergence for the mixed Neumann-Dirichlet problem should be the $\Gamma$-convergence of sets while, for the problems involving $u_\alpha^\Gamma$, the relevant topology is rather the weak $L^\infty-*$ one on the compactification of $\{\mathds 1_\Gamma\,, \H(\Gamma)=V_0\}$.  We  refer to section \ref{Se:Tech} for additional comments.\end{remark}

The typical problems we consider in this paper are
 \begin{equation}\label{Eq:PvIntro}
 \underset{\substack{\Gamma\subset \partial\Omega\\ \H\left(\Gamma\right)=V_0}}{\sup/\inf}\int_\O j\left(u^\Gamma_\alpha\right)\quad \text{ or }\quad \underset{\substack{\Gamma\subset \partial\Omega\\ \H\left(\Gamma\right)=V_0}}{\sup/\inf}\int_{\partial \O} j\left(u^\Gamma_\alpha\right)
 \end{equation}
 where $\H(\cdot)$ stands for the $(d-1)$-Hausdorff dimensional measure of $\Gamma$ and $j$ is a smooth non-linearity. Stated as such, these problems are shape optimisation problems. We consider a relaxed version of this problem, where the term $\alpha \mathds 1_\Gamma$ in \eqref{Eq:MainIntro} is replaced with a function $\beta\in L^\infty(\partial \O)$ satisfying certain constraints; we explain later in this introduction how qualitative properties for the optimisation with respect to $\beta$ are translated to (non-)existence results for the initial shape optimisation problem. 
 
 We also consider, for the sake of completeness, compliance-type problems: namely, the goal for this class of problems is to solve
 \begin{equation}\label{Eq:PvIntro2}
 \underset{\substack{\Gamma\subset \partial\Omega\\ \H\left(\Gamma\right)=V_0}}{\sup/\inf}\int_\O fu_\alpha^\Gamma.\end{equation}
  
Since all the results we establish hereafter are valid whatever the value of $\alpha>0$, we may without loss of generality fix $\alpha=1$ in this formulation. We also underline that we first work with linear models and cover non-linear models in section \ref{Se:NL}.

\subsubsection{Plan of the introduction} The introduction of this paper is long, and we thus take the liberty to give a plan to ease the reading.
Subsection \ref{Su:Cons} contains the presentation of the state equation, as well as the definitions of the two types of functionals considered. In particular, it concludes with a presentation of the focal point of our analysis for non-energetic functionals, the bang-bang property. In subsection \ref{Se:Relaxation}, we motivate the analysis of this property by linking it to existence properties for some shape optimisation problems. in sections \ref{Se:Results}-\ref{Se:Results2}, we give our main theorems in the linear case, first stating the ones dealing with non-energetic criteria, second presenting the ones relevant for energetic criteria. Section \ref{Se:NL} contains the results for non-linear models. Section \ref{Se:Tech} is devoted to the technical context of our proofs. The introduction concludes with section \ref{Se:Bib}, which contains a discussion of the relevant references. 

 \subsubsection{Problem under consideration in this article}\label{Su:Cons}
 \paragraph{Relaxation of the problem and admissible class of coefficients $\boldsymbol{\beta}$.} We have mentioned we would consider a relaxed version of \eqref{Eq:PvIntro}-\eqref{Eq:PvIntro2}. In order to make the above statement about the relaxation of the problem precise,
 we define, for a fixed $V_0\in (0;\mathcal H^{d-1}(\partial \O))$, the set $\mathcal B(\partial \O)$ as
 \begin{equation}\label{Eq:AdmRobin}
 \mathcal B(\partial \O):=\left\{\beta\in L^\infty(\partial \O):\, 0\leq \beta\leq 1, \int_{\partial \O}\beta=V_0\right\}.\end{equation} 
 This set corresponds corresponds to the closure of the set $\{\mathds 1_\Gamma, \H\left(\Gamma\right)=V_0\}$ for the weak-star topology of $L^\infty(\partial\Omega)$ \cite[Proposition~7.2.17]{henrot-pierre}. The set $\mathcal B(\partial \O)$ is the admissible class we consider throughout this paper. The link between existence properties for the shape optimisation problems of type \eqref{Eq:PvIntro} and the so-called\emph{ bang-bang property} for optimisation problems set in $\mathcal B(\partial \O)$ is investigated in section \ref{Se:Relaxation}.
 
 \paragraph{State equation} For the sake of simplicity, we first focus in this paper on a simpler version of \eqref{Eq:MainIntro2}.  This allows us to not dwell on existence and regularity issues, and we thus hope to provide a clear description of the type of arguments involved in the proofs of our results. We refer to section \ref{Se:NL} for non-linear models.
 
 Henceforth, $\O$ is a fixed open bounded connected subset of $\R^d$ with a $\mathscr C^2$ boundary, and $f\in L^\infty(\O)$ is a fixed source term. We further assume that
 \begin{equation}\label{Eq:Hypf}\tag{$\bold H_f$}
 f\text{ does not vanish identically and }f\geq 0\text{ a.e. in }\O.
 \end{equation}
For any $\beta\in \mathcal B(\partial \O)$, we denote by $u_\beta$ the unique solution of the equation
\begin{equation}\label{Eq:MainRobin}\tag{$\bold E_\beta$}\begin{cases}
-\Delta u_\beta=f&\text{ in }\O, 
\\ \frac{\partial u_\beta}{\partial \nu}+\beta u_\beta=0&\text{ on }\partial\O.
\end{cases}
\end{equation}
Alternatively, $u_\beta$ is the unique minimiser in $W^{1,2}(\O)$ of the energy functional 
\begin{equation}\label{Eq:Energy}
\mathcal E_\beta:W^{1,2}(\O)\ni u\mapsto \frac12\int_\O |\n u|^2+\frac12\int_{\partial \O}\beta u^2-\int_\O fu.\end{equation} 
As a consequence of the strong maximum principle, for any $\beta\in \mathcal B(\partial \O)$ and any $f\in L^\infty(\O)$ satisfying \eqref{Eq:Hypf}, one has
$$
\inf_{\overline \O}u_\beta>0.
$$
\paragraph{First type of functional: energetic functionals}
Two natural optimisation problems that stem from \eqref{Eq:MainRobin} are the problems of maximising and minimising the compliance. In other words, we shall tackle the problems
\begin{equation*}
\underset{\beta \in \mathcal B(\partial \O)}{\max/\min}\int_\O fu_\beta=\mathcal F(\beta) \textnormal{ \textsf{(energetic criteria)}}.
\end{equation*}
What is notable here is that the functional $\mathcal F$ rewrites naturally using the energy defined in \eqref{Eq:Energy}, whence the wording "energetic" to describe such criteria. Straightforward computations indeed lead to 
\begin{equation*}
\mathcal F(\beta)=-2\mathcal E_\beta(u_\beta)=-2\min_{u\in W^{1,2}(\O)}\mathcal E_\beta(u).\end{equation*}This alternative formulation enables us to obtain a finer description of optimisers. Since this is not the central point of this paper, we state the relevant results last, in Theorem \ref{Theo:minmaxNRJ}.

\paragraph{Second type of functional: non-energetic criteria}
We want to consider boundary and interior cost functionals. For both these criteria, we consider a fixed non-linearity $j:\R\to \R$. As we will see, \emph{the monotonicity of $j$ plays a crucial role in the qualitative analysis of optimisers}. Thus, we choose $j$ to be monotonous. Since we are dealing both with minimisation and maximisation problems, we may take $j$ to be increasing. Overall, we  assume that 
\begin{equation}\label{Eq:Hypj}\tag{$\bold H_j$}
j\in \mathscr C^2(\R_+)\text{ and } j'>0\text{ on }\R_+^*.
\end{equation} Using this non-linearity we define two functionals:
\begin{equation}\label{Eq:PvDef}
\begin{array}{ll}
\displaystyle \mathcal J_{\partial \O}:\mathcal B(\partial \O)\ni\beta\mapsto \int_{\partial \O}j(u_\beta)& \textnormal{ \textsf{(boundary criterion)}},\\ \\
\displaystyle \mathcal J_{ \O}:\mathcal B(\partial \O)\ni\beta\mapsto \int_{ \O}j(u_\beta) & \textnormal{ \textsf{(distributed criterion)}}, 
\end{array}
\end{equation}
Our focus is on the optimisation problems
\begin{equation}\label{Eq:2}
\underset{\beta \in \mathcal B(\partial \O)}{\sup/\inf}\int_{\partial \O}j(u_\beta),
\end{equation}
where $u_\beta$ solves \eqref{Eq:MainRobin}. 
We refer again to section \ref{Se:Bib} of this introduction for a discussion of the history of these problems. Several features of the optimisers are relevant in such queries, among them the so-called bang-bang property: \emph{do optimisers $\beta^*$ write $\mathds 1_\Gamma$ for some subset $\Gamma$ of $\partial \O$}? To justify why this is a relevant question, let us now discuss briefly the relationship between this bang-bang property and the existence of optimal shapes for the related shape optimisation problem.

\subsubsection{Relationship between existence properties for the shape optimisation problem and the bang-bang property for the relaxed formulation.}\label{Se:Relaxation} Let us now explain a bit more in details the link between the initial shape optimisation problems 
\begin{equation}\label{Eq:1}
\underset{\substack{\Gamma\subset \partial\Omega\\ \H\left(\Gamma\right)=V_0}}{\sup/\inf} \int_{\partial \O}j\left(w^\Gamma\right),
\end{equation} 
where $w^\Gamma$ solves 
\begin{equation}\begin{cases}
-\Delta w^{\Gamma}=f&\text{ in }\O,\\ 
\frac{\partial w^\Gamma}{\partial \nu}+\mathds 1_\Gamma w^\Gamma=0&\text{ on }\partial \O,\end{cases}\end{equation} and the relaxed problem \eqref{Eq:2}.

The first thing that should be noted is that \eqref{Eq:2} has a solution $\beta^*$. We refer to Lemma \ref{Le:Exist} below and indicate that this follows from the direct method in the calculus of variations: the weak $L^\infty-*$ compactness of $\mathcal B(\partial \O)$ and the continuity for the $L^\infty-*$ topology of the functionals suffice to obtain this result. Obtaining such an existence property for \eqref{Eq:1} is much harder.

 However, since $\mathcal B(\partial \O)$ is the compactification of the set $\{\mathds 1_\Gamma \,, \H\left(\Gamma\right)=V_0\}$ it follows that for every $\beta\in \mathcal B(\partial \O)$ there exists a sequence $\{\Gamma_k\}_{k\in \N}$ of measurable subsets of $\partial \O$ with Hausdorff measure $V_0$ such that 
 $$\mathds 1_{\Gamma_k}\underset{k\to \infty}\to \beta\text{ in the weak $L^\infty-*$ topology}.$$ Since $\mathcal J_{\partial \O}$ is continuous for this topology, we obtain
$$
\mathcal J_{\partial \O}\left(\mathds 1_{\Gamma_k}\right)\xrightarrow[k\to +\infty]{} \mathcal J_{\partial\O}( \beta).
$$ 
The set $\{\mathds 1_\Gamma, \mathcal H^{d-1}(\Gamma)=V_0\}$ corresponds exactly the set of extreme points of the admissible set $\mathcal B(\partial \O)$. We call its elements \emph{bang-bang functions}.

With these informations it is easy to obtain the following proposition/definition describing the relationships between the shape optimisation problem~\eqref{Eq:1} and the bilinear optimal control problem \eqref{Eq:2}.
\begin{definition}\label{De:Link}
\begin{enumerate}
\item Problem~\eqref{Eq:1} has a solution if, and only if there exists a bang-bang solution $\beta^*$ of Problem~\eqref{Eq:2}. In this case, we say that Problem~\eqref{Eq:2} satisfies the bang-bang property.
\item Alternatively, Problem~\eqref{Eq:1} does not have a solution if, and only if any solution $\beta^*$ of \eqref{Eq:2} satisfies $\Vol\left(\{0<\beta^*<1\}\right)>0$. In this case, we say that Problem~\eqref{Eq:1} enjoys a relaxation property.
\end{enumerate}
\end{definition}

 \subsection{First case: boundary and distributed criteria}\label{Se:Results}
 \subsubsection{Existence results and bang-bang property for maximisation problems }

\begin{center}
\fbox{\textsf{Boundary criteria}}
\end{center}
We first tackle the maximisation problem
\begin{equation}\tag{$\bold P_{\max,\partial \O,\mathcal B}$}\label{Eq:MaxBound}
\max_{\beta\in \mathcal B(\partial \O)}\int_{\partial \O} j(u_\beta),
\end{equation} 
where $u_\beta$ denotes the unique solution to \eqref{Eq:MainRobin}.
We refer to Lemma~\ref{Le:Exist}  below for the existence of optimal profiles. We also state the related shape optimisation problem
\begin{equation}\label{Eq:MaxBoundSo}\tag{$\bold P_{\max,\partial \O,\Sigma}$}
\sup_{\substack{\Gamma\subset \partial \O\\ \mathcal H^{d-1}(\Gamma )=V_0}}\int_{\partial \O} j\left(w^\Gamma\right).
\end{equation} 
Our first result states that maximisers of \eqref{Eq:MaxBound} satisfy the bang-bang property. Following the discussion of Section \ref{Se:Relaxation}, the shape optimisation problem \eqref{Eq:MaxBoundSo} has a solution.

\begin{theorem}\label{Th:BgbgBound}
Let $\O$ be a bounded open set of $\R^d$ such that $\partial\O$ is $\mathscr{C}^2$. 
Assume $f$ satisfies \eqref{Eq:Hypf} and $j$ satisfies \eqref{Eq:Hypj}.
Any solution $\beta^*$ of the optimisation problem \eqref{Eq:MaxBound} is bang-bang: there exists $\Gamma^*\subset \partial \O$ such that $ \beta^*=\mathds 1_{\Gamma^*}$. As a consequence, the shape optimisation problem \eqref{Eq:MaxBoundSo} has a solution.
\end{theorem}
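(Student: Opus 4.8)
The plan is to combine a first-order optimality analysis with a second-order \emph{oscillatory} argument in the spirit of \cite{MNP2021}. First I would record that a maximiser $\beta^*$ exists (Lemma~\ref{Le:Exist}) and that $\beta\mapsto u_\beta$ is differentiable from $\mathcal B(\partial\O)$ into $W^{1,2}(\O)$. For an admissible perturbation $h$ (meaning $\beta^*+th\in\mathcal B(\partial\O)$ for small $t>0$), the derivative $\dot u:=\dot u_{\beta^*}[h]$ solves $-\Delta\dot u=0$ in $\O$ with $\partial_\nu\dot u+\beta^*\dot u=-h\,u_{\beta^*}$ on $\partial\O$. Introducing the adjoint state $p$ solving $-\Delta p=0$ in $\O$, $\partial_\nu p+\beta^* p=j'(u_{\beta^*})$ on $\partial\O$, and pairing the two weak formulations, the first variation reduces to
\[ \mathrm d\mathcal J_{\partial\O}(\beta^*)[h]=\int_{\partial\O}j'(u_{\beta^*})\dot u=-\int_{\partial\O}h\,u_{\beta^*}\,p. \]
Since $f\geq0$ satisfies \eqref{Eq:Hypf} and $j'>0$ by \eqref{Eq:Hypj}, the maximum principle gives $u_{\beta^*}>0$ and $p>0$ on $\overline\O$, so the switching function $\psi:=u_{\beta^*}p$ is strictly positive.

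Next I would argue by contradiction, assuming $\beta^*$ is not bang-bang, i.e.\ $\omega:=\{0<\beta^*<1\}$ has positive $\H$-measure. A standard exchange (bathtub) argument on $\omega$ — where perturbations of both signs and zero average are admissible — forces $\psi$ to be constant there: $\psi=u_{\beta^*}p\equiv\kappa$ a.e.\ on $\omega$, with $\kappa>0$. The heart of the matter is then to contradict the \emph{second}-order optimality condition. Differentiating twice and eliminating $\ddot u$ through the same adjoint $p$, one obtains
\[ \mathcal J_{\partial\O}''(\beta^*)[h,h]=\int_{\partial\O}j''(u_{\beta^*})\dot u^2-2\int_{\partial\O}h\,\dot u\,p. \]

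I would then feed into this quadratic form a family of highly oscillating perturbations $h_n$ supported on a subset $\omega_\delta=\{\delta\leq\beta^*\leq1-\delta\}\subset\omega$ of positive measure, of the form $h_n=\phi\cos(n s)$ in a boundary chart ($s$ the tangential variable, $\phi$ a fixed smooth profile), renormalised to have zero average so as to remain admissible. The associated $\dot u_n$ develops a boundary layer: solving the Robin problem with oscillatory Neumann-type data of size $O(1)$ yields $\dot u_n\simeq-\tfrac1n\,\phi\,u_{\beta^*}\cos(n s)\,e^{-n\,\mathrm{dist}(\cdot,\partial\O)}$, whence $\|\dot u_n\|_{L^2(\partial\O)}=O(n^{-1})$. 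Consequently the first term is $O(n^{-2})$, while the crossed term is resonant and of definite sign: $-2\int_{\partial\O}h_n\dot u_n p\simeq\tfrac1n\int\phi^2 u_{\beta^*}p=\tfrac\kappa n\int\phi^2>0$, using $u_{\beta^*}p=\kappa$ on the support of $\phi$. Since the first variation vanishes along $h_n$ (as $\psi$ is constant on $\omega$ and $\int_{\partial\O}h_n=0$), setting $g(t)=\mathcal J_{\partial\O}(\beta^*+t h_n)$ gives $g'(0)=0$ and $g''(0)>0$ for $n$ large, so $g(t)>g(0)$ for small $t\neq0$, contradicting the maximality of $\beta^*$. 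Hence $\H(\omega)=0$ and $\beta^*=\mathds 1_{\Gamma^*}$. The existence of an optimal shape for \eqref{Eq:MaxBoundSo} then follows immediately from item~1 of Definition~\ref{De:Link}.

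The step I expect to be the main obstacle is the rigorous justification of the boundary-layer asymptotics for $\dot u_n$: one must build the oscillatory corrector precisely, control the remainder in $W^{1,2}(\O)$ and its trace uniformly enough that the $O(n^{-1})$ resonant term genuinely dominates the $O(n^{-2})$ error, and absorb the curvature and lower-order $\beta^*\dot u_n$ contributions coming from the chart. A secondary technical point is enforcing the constraints exactly — keeping $\beta^*+t h_n$ inside $[0,1]$ on $\omega_\delta$ and imposing $\int_{\partial\O}h_n=0$ without spoiling the leading-order computation — which is precisely why the perturbation must be localised in $\omega_\delta$ and carefully normalised.
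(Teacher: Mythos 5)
Your overall strategy---vanishing first variation on $\omega:=\{0<\beta^*<1\}$, then a high-frequency perturbation making the second variation positive, with the sign ultimately coming from $u_{\beta^*}p>0$ (i.e.\ from \eqref{Eq:Hypf} and \eqref{Eq:Hypj})---is exactly the spirit of the paper's proof, and your idealised order count ($O(n^{-1})$ resonant term versus $O(n^{-2})$ remainder) is consistent with what the paper obtains. However, there is a genuine gap in the implementation, and it is not the one you flag (justifying the boundary layer) but a more basic one: the set $\omega_\delta=\{\delta\leq\beta^*\leq 1-\delta\}$ is only a \emph{measurable} subset of $\partial\O$ of positive $\mathcal H^{d-1}$-measure. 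It may have empty interior (a fat-Cantor-type set), so there is no boundary chart in which you can place a smooth profile $\phi$ with $\operatorname{supp}\phi\subset\omega_\delta$ and oscillate it. The only way to keep $h_n$ supported in $\omega_\delta$ is to truncate, $h_n=\mathds 1_{\omega_\delta}\phi\cos(ns)$, and then the ansatz $\dot u_n\simeq -\tfrac1n h_n u_{\beta^*}e^{-n\,\mathrm{dist}(\cdot,\partial\O)}$ breaks down: the truncated datum has non-negligible low-frequency content, and while $h_nu_{\beta^*}\rightharpoonup 0$ does give $\Vert\dot u_n\Vert_{L^2(\partial\O)}\to 0$ (by compactness of the solution operator), this convergence carries no rate---Riemann--Lebesgue gives $o(1)$, not $O(1/n)$. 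Without a rate you cannot conclude that the resonant cross term dominates the $j''$ term, nor even that the cross term is positive at leading order, since the solution operator weighted by $\Psi_{\beta^*}=p_{\beta^*}/u_{\beta^*}$ is not a positive operator.

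The paper circumvents precisely this difficulty by replacing explicit oscillations with a spectral construction that works on an arbitrary measurable set: it expands the datum $-hu_{\beta^*}$ in the Robin--Steklov eigenbasis $\{\phi_k\}$ of \eqref{def:phik} and chooses $h\in L^2(\omega^*)$ in the kernel of the finitely many continuous linear forms $h\mapsto\int_{\omega^*}h$ and $h\mapsto\int_{\omega^*}hu_{\beta^*}\phi_k$, $k<K$---possible because $L^2(\omega^*)$ is infinite dimensional---so that $\dot u_{\beta^*}[h]$ contains \emph{only} modes $k\geq K$. This gives the exact, quantified analogue of your boundary estimate, namely $\Vert\dot u_{\beta^*}[h]\Vert_{L^2(\partial\O)}^2\leq(\sigma_K-1)^{-1}\Vert\nabla\dot u_{\beta^*}[h]\Vert^2_{L^2(\O)}$ (condition \eqref{Eq:C2}); the interior smallness \eqref{Eq:C1} is then obtained by a separate Rellich--Kondrachov contradiction argument, and the positivity of the dominant gradient term is extracted beforehand in Proposition \ref{Pr:Rayleigh} by integrating by parts against $\Psi_{\beta^*}$, using $\partial_\nu\Psi_{\beta^*}=j'(u_{\beta^*})/u_{\beta^*}>0$. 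To salvage your route you would essentially have to prove these orthogonality and positivity statements anyway, at which point you have reproduced the paper's argument.
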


The proof of this Theorem is one of the central points of this paper. It is carried out using a high frequency analysis of the second order derivative of the functional. While this type of results is usually proved using convexity or concavity arguments, the fact that the problem is not energetic and that we are considering a bilinear control problems \emph{a priori} prohibits obtaining a convexity property for the functional $\mathcal J_{\partial \O}$. We refer to section \ref{Se:Tech} for a discussion of the method.

\begin{center}
\fbox{\textsf{Distributed criteria}}
\end{center}
Although we decided to start with boundary criteria as, to the best of our knowledge, they have received less attention in the literature, our methods naturally extend to the case of distributed criteria.  In this case, the optimisation problem is
\begin{equation}\tag{$\bold P_{\max, \O,\mathcal B}$}\label{Eq:MaxDist}
\max_{\beta\in \mathcal B(\partial \O)}\int_{ \O} j(u_\beta),
\end{equation} 
where $u_\beta$ denotes the unique solution to \eqref{Eq:MainRobin}, and the related shape optimisation problem reads
\begin{equation}\label{Eq:MaxDistSo}\tag{$\bold P_{\max, \O,\Sigma}$}
\sup_{\substack{\Gamma\subset \partial \O\,, \mathcal H^{d-1}(\Gamma ) =V_0}}\int_{ \O} j\left(w^\Gamma\right).
\end{equation} 
The main result is the following Theorem:

\begin{theorem}\label{Th:BgbgDist}
Let $\O$ be a bounded open set of $\R^d$ such that $\partial\O$ is $\mathscr{C}^2$. 
Assume $f$ satisfies \eqref{Eq:Hypf} and $j$ satisfies \eqref{Eq:Hypj}.
Any solution $\beta^*$ of the optimisation problem \eqref{Eq:MaxDist} is bang-bang: there exists $\Gamma^*\subset \partial \O$ such that $ \beta^*=\mathds 1_{\Gamma^*}$. As a consequence, the shape optimisation problem \eqref{Eq:MaxDistSo} has a solution.
\end{theorem}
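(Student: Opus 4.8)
The plan is to adapt, almost verbatim, the high-frequency oscillatory strategy already carried out for Theorem \ref{Th:BgbgBound}, the only genuine change being the adjoint state. Writing $u_\beta$ for the solution of \eqref{Eq:MainRobin}, I first introduce the adjoint $p_\beta\in W^{1,2}(\O)$ solving $-\Delta p_\beta=j'(u_\beta)$ in $\O$ with $\partial_\nu p_\beta+\beta p_\beta=0$ on $\partial\O$. Since $u_\beta>0$ on $\overline\O$ and $j'>0$ by \eqref{Eq:Hypj}, the source $j'(u_\beta)$ is nonnegative and not identically zero, so the strong maximum principle yields $\inf_{\overline\O}p_\beta>0$. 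Differentiating \eqref{Eq:MainRobin} with respect to $\beta$ in an admissible direction $h$, the state derivative $\dot u$ solves $-\Delta\dot u=0$ in $\O$, $\partial_\nu\dot u+\beta\dot u=-h\,u_\beta$ on $\partial\O$; integrating by parts twice against $p_\beta$ gives the first-order formula $\mathcal J_\O'(\beta)[h]=-\int_{\partial\O}h\,u_\beta p_\beta$. Differentiating once more and using that the second state derivative satisfies $\partial_\nu\ddot u+\beta\ddot u=-2h\dot u$, I obtain
$$\mathcal J_\O''(\beta)[h,h]=\int_\O j''(u_\beta)\dot u^2-2\int_{\partial\O}h\,\dot u\,p_\beta.$$

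Next I exploit optimality. From the first-order formula and the Lagrange multiplier $c$ associated with the constraint $\int_{\partial\O}\beta=V_0$, the switching function $\psi:=u_{\b}p_{\b}$ must equal $c$ a.e.\ on $\omega:=\{0<\b<1\}$. Arguing by contradiction, suppose $\b$ is not bang-bang, i.e.\ $\H(\omega)>0$. Then for $\delta>0$ small the set $\omega_\delta:=\{\delta\le\b\le1-\delta\}\subset\omega$ still has positive $\H$-measure, and on $\omega_\delta$ perturbations of $\b$ are admissible in both directions.

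The core is the oscillatory second-order analysis. Fixing a boundary chart meeting $\omega_\delta$, I build perturbations $h_n$ supported in $\omega_\delta$, of local form $\rho\cos(n\,\xi\cdot y)$ with $\rho$ a fixed cut-off and frequency $n\to\infty$, normalised to have zero mean (a harmless $O(1/n)$ correction). Because $\psi\equiv c$ on $\omega\supset\omega_\delta$, the first-order term vanishes \emph{exactly}, $\mathcal J_\O'(\b)[h_n]=-c\int_{\partial\O}h_n=0$; hence $\b\pm t h_n$ stays admissible for small $t$ and optimality forces $\mathcal J_\O''(\b)[h_n,h_n]\le0$. It thus suffices to show this second derivative is positive for large $n$. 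A boundary-layer expansion for $\dot u=v_n$ (harmonic, with rapidly oscillating Robin data $-h_n u_\b$) shows that $v_n$ concentrates near $\partial\O$: its trace obeys $v_n|_{\partial\O}=-\tfrac{h_n u_\b}{n}+o(1/n)$ while $\|v_n\|_{L^2(\O)}=O(n^{-3/2})$. Consequently the interior term $\int_\O j''(u_\b)v_n^2=O(n^{-3})$ is negligible regardless of the sign of $j''$, whereas
$$-2\int_{\partial\O}h_n v_n\,p_\b=\frac{2}{n}\int_{\partial\O}h_n^2\,u_\b p_\b+o(1/n).$$
Since $h_n^2=\rho^2\cos^2(n\,\xi\cdot y)$ has weak-$*$ limit $\tfrac12\rho^2$, multiplying by $n$ the leading term converges to $\int_{\partial\O}\rho^2\,u_\b p_\b>0$, the positivity being exactly where $u_\b p_\b>0$ on $\overline\O$ enters. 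Hence $\mathcal J_\O''(\b)[h_n,h_n]>0$ for $n$ large, contradicting optimality; thus $\H(\omega)=0$ and $\b=\mathds 1_{\Gamma^*}$ with $\Gamma^*:=\{\b=1\}$. By the discussion of Section \ref{Se:Relaxation}, the shape problem \eqref{Eq:MaxDistSo} then admits a solution.

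The main obstacle is this last, quantitative step: both contributions to $\mathcal J_\O''$ vanish as $n\to\infty$, so a merely qualitative limit is useless and one needs the precise $1/n$ asymptotics of the boundary quadratic form matched against the smallness of the interior term. Making this rigorous amounts to controlling the Robin-solution operator on high-frequency data over the curved $\mathscr C^2$ boundary — passing to local coordinates, justifying the exponential boundary-layer ansatz, and absorbing curvature and the lower-order $\beta\dot u$ term into $o(1/n)$ remainders. This is precisely the ``fine tuning'' alluded to after Theorem \ref{Th:BgbgBound}; once established there for the boundary functional, the present distributed case follows identically, the sole modification being the interior adjoint equation above, whose solution remains positive and so preserves the decisive sign $u_\b p_\b>0$.
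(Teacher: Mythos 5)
Your setup (the interior adjoint $p_\beta$ solving $-\Delta p_\beta=j'(u_\beta)$ with homogeneous Robin condition, its positivity, and the formulas $\dot{\mathcal J}(\beta)[h]=-\int_{\partial\O}hu_\beta p_\beta$ and $\ddot{\mathcal J}(\beta)[h,h]=\int_\O j''(u_\beta)\dot u^2-2\int_{\partial\O}h\dot u\,p_\beta$) coincides exactly with the paper's computations, and the first-order optimality discussion is fine. The genuine gap is the core quantitative step, which you assert rather than prove and which you yourself flag as ``the main obstacle'': the boundary-layer expansion $\dot u|_{\partial\O}=-h_n u_\b/n+o(1/n)$, $\Vert\dot u\Vert_{L^2(\O)}=O(n^{-3/2})$, and the $o(1/n)$ remainder in $-2\int_{\partial\O}h_n\dot u\,p_\b$. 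A proof cannot outsource its decisive estimate, and deferring it to ``the boundary case'' does not help, since no such asymptotic analysis exists there either. Worse, the claims are doubtful as stated: $\omega_\delta\subset\{0<\b<1\}$ is merely a measurable set, so your perturbation must contain an indicator factor, $h_n=\mathds 1_{\omega_\delta}\rho\cos(n\,\xi\cdot y)$. The Fourier transform of an indicator decays only like $|\xi'|^{-1}$, so the Robin datum $-h_nu_\b$ carries a low-frequency component of $L^2$-size of order $n^{-1/2}$; the trace of $\dot u$ is then \emph{not} $-h_nu_\b/n+o(1/n)$, and the pairing of this low-frequency part of $\dot u$ against $h_np_\b$ can contribute terms of order $1/n$ --- exactly the order of your claimed positive main term --- with no sign control. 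Curvature of the $\mathscr C^2$ boundary adds further uncontrolled remainders. So the contradiction $\ddot{\mathcal J}(\b)[h_n,h_n]>0$ is not established.

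For comparison, the paper reaches the same high-frequency mechanism without any asymptotic expansion, by two devices you are missing. First, a coercivity estimate (the adaptation of Proposition \ref{Pr:Rayleigh}): using the boundary condition to write $-2\int_{\partial\O}h\dot u\,p_\b=\int_{\partial\O}\Psi_\b\left(\partial_\nu(\dot u^2)+2\b\dot u^2\right)$ with $\Psi_\b=p_\b/u_\b$, then Green's identity applied to $z=\dot u^2$ (noting $\Delta(\dot u^2)=2|\n\dot u|^2$ since $\dot u$ is harmonic, and $\partial_\nu\Psi_\b=0$ in the distributed case) gives $\ddot{\mathcal J}(\b)[h,h]\geq A\int_\O|\n\dot u|^2-B\Vert\dot u\Vert_{W^{1,2}(\O)}\Vert\dot u\Vert_{L^2(\O)}-C\int_{\partial\O}\dot u^2$ with $A>0$. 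Second, instead of explicit oscillations, it uses the Robin--Steklov basis \eqref{def:phik} associated with $\b$ itself: choosing $h$ supported in $\omega^*$ in the kernel of finitely many continuous linear forms kills the first $K$ modes of $\dot u$ and yields the \emph{exact} inequality $\Vert\dot u\Vert_{L^2(\partial\O)}^2\leq(\sigma_K-1)^{-1}\Vert\n\dot u\Vert_{L^2(\O)}^2$, valid for rough $\omega^*$ with no remainder terms; a Rellich--Kondrachov compactness argument then provides $\Vert\dot u\Vert_{L^2(\O)}\leq\e\Vert\dot u\Vert_{W^{1,2}(\O)}$, and these two bounds inserted in the coercivity estimate give $\ddot{\mathcal J}(\b)[h,h]>0$, as required by the argument in the proof of Theorem \ref{Th:BgbgBound}. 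To salvage your route you would have to replace your $o(1/n)$ ansatz by estimates of this exact, non-asymptotic kind.
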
Since the proof is very  similar to that of Theorem \ref{Th:BgbgBound}, we omit it in the main text of the article and only give it in Appendix \ref{Ap:Dist}.

\subsubsection{Non-existence and relaxation phenomenon for minimisation problems}
\begin{center}
\fbox{\textsf{Boundary criteria}}
\end{center}
Let us now consider the minimisation problem
\begin{equation}\tag{$\bold P_{\min,\partial \O,\mathcal B}$}\label{Eq:MinBound}
\min_{\beta\in \mathcal B(\partial \O)}\int_{\partial \O} j(u_\beta).
\end{equation} 
Once again, we refer to Lemma~\ref{Le:Exist} for the existence of optimal profiles. As stated hereafter, we shall show that the related shape optimisation problem
\begin{equation}\label{Eq:MinBoundSo}\tag{$\bold P_{\min,\partial \O,\Sigma}$}
\inf_{\substack{\Gamma\subset \partial \O\ \mathcal H^{d-1}(\Gamma )=V_0}}\int_{\partial \O} j\left(w^\Gamma\right)
\end{equation} exhibits a relaxation phenomenon. It is interesting to notice that the main argument for showing the second part of the following result rests upon a low frequency analysis of the second order optimality conditions.
\begin{theorem}\label{Th:RelaxBound}
Let $\O$ be a bounded open set of $\R^n$ such that $\partial\O$ is $\mathscr{C}^2$. 
Assume $f$ satisfies \eqref{Eq:Hypf} and $j$ satisfies \eqref{Eq:Hypj}.
\begin{itemize}
\item[(i)] Any solution $\beta^*$ of \eqref{Eq:MinBound} satisfies 
$$
\H\left(\{0<\beta^*<1\}\right)>0,
$$ 
so that \eqref{Eq:MinBoundSo} does not have a solution and enjoys a relaxation phenomenon.
\item[(ii)] Furthermore, let us introduce
$$
U_0(f):=\sup_{\beta\in \mathcal{B}(\partial\O)}\sup_{x\in \overline{\O}}u_\beta (x)\in (0,+\infty).
$$
There exists $C>0$ such that, if 
 $$
j''(u)\leq -Cj'(0)\quad \text{on}\quad [0,U_0(f)]\quad \text{and}\quad j'(U_0(f))>0,
$$then for any solution $\b$ of \eqref{Eq:MinBound} we have
$$\H\left(\{\b=1\}\right)>0.$$
\end{itemize}
\end{theorem}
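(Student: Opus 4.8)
The plan is to treat both parts through the first- and second-order optimality conditions, computed via an adjoint state. First I would fix $\beta\in\mathcal B(\partial\O)$ and introduce the adjoint $p_\beta$ solving $-\Delta p_\beta=0$ in $\O$ with $\partial_\nu p_\beta+\beta p_\beta=j'(u_\beta)$ on $\partial\O$. Differentiating $u_\beta$ in an admissible direction $h$, so that $\dot u$ solves $-\Delta\dot u=0$, $\partial_\nu\dot u+\beta\dot u=-hu_\beta$, and integrating by parts against $p_\beta$, one obtains the first variation
$$\mathrm d\mathcal J_{\partial\O}(\beta)[h]=-\int_{\partial\O}h\,u_\beta p_\beta$$
and, after a second differentiation (with $\ddot u$ satisfying $\partial_\nu\ddot u+\beta\ddot u=-2h\dot u$), the second variation
$$\mathrm d^2\mathcal J_{\partial\O}(\beta)[h,h]=\int_{\partial\O}j''(u_\beta)\dot u^2-2\int_{\partial\O}h\,\dot u\,p_\beta.$$
The sign facts I would establish first are $u_\beta>0$ on $\overline\O$ (already stated) and $p_\beta>0$, by Hopf's lemma together with $j'(u_\beta)>0$; the latter holds on $[0,U_0(f)]$ by $(\mathbf H_j)$ and, in part (ii), is preserved despite the strong concavity precisely because of the assumption $j'(U_0(f))>0$. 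Thus the switching function $\psi_\beta:=u_\beta p_\beta$ is positive, and the bathtub/Pontryagin analysis produces a multiplier $\mu$ with $\{\psi_{\b}>\mu\}\subseteq\{\b=1\}$, $\{\psi_{\b}<\mu\}\subseteq\{\b=0\}$ and $\{0<\b<1\}\subseteq\{\psi_{\b}=\mu\}$.

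For part (i) I would argue by contradiction, assuming $\b=\mathds 1_{\Gamma^*}$ is bang-bang, so that the first-order conditions reduce to $\psi_{\b}\geq\mu$ on $\Gamma^*$ and $\psi_{\b}\leq\mu$ on $\partial\O\setminus\Gamma^*$. The key computation is the boundary identity $\partial_\nu\psi_{\b}=u_{\b}j'(u_{\b})-2\b\,\psi_{\b}$, read off from the boundary conditions of $u_{\b}$ and $p_{\b}$; on the Neumann part $\{\b=0\}$ it gives $\partial_\nu\psi_{\b}=u_{\b}j'(u_{\b})>0$. I would then try to combine this sign information with the constraint $\psi_{\b}\leq\mu$ on $\{\b=0\}$ and the free-boundary behaviour of $\psi_{\b}$ (which must attain the value $\mu$ along the interface while having opposite-signed contributions to its normal derivative on the two sides) to force $\H(\{\psi_{\b}=\mu\})>0$, and then to force $\b$ to be genuinely intermediate on a subset of positive measure, contradicting bang-bang. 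I expect this exclusion to be the main obstacle: the second variation need not be negative along the only directions available at a bang-bang point (one-sided exchange directions) when $j$ is merely increasing, so relaxation cannot come from a convexity sign and must be extracted from this finer first-order/free-boundary analysis, which is delicate on a general $\mathscr C^2$ boundary and is where I would concentrate the effort.

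For part (ii) I would instead run a low-frequency second-order test on the plateau. By part (i) the set $\omega:=\{0<\b<1\}$ has positive measure, and if moreover $\H(\{\b=1\})=0$ then, since $\int_{\partial\O}\b=V_0$ and $\b\leq 1$, all the mass sits on $\omega$, whence $\H(\omega)>V_0$: the plateau is macroscopic. On $\omega$ the first-order condition gives $\psi_{\b}\equiv\mu$, so any $h$ supported in $\omega$ with $\int h=0$ is first-order flat and admissible for small $t$ (since $0<\b<1$ there), and minimality forces $\mathrm d^2\mathcal J_{\partial\O}(\b)[h,h]\geq 0$. I would then parametrise the perturbation by a fixed low-frequency profile, prescribing a harmonic $\dot u$ and setting $h=-(\partial_\nu\dot u+\b\dot u)/u_{\b}$ (adjusted so that $\int h=0$); because $\omega$ is macroscopic such a profile exists with $\int_{\partial\O}\dot u^2>0$ and with the cross term $-2\int h\,\dot u\,p_{\b}$ bounded by a geometric constant $K$ independent of the concavity. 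Since $j''\leq -Cj'(0)$ yields $\int j''(u_{\b})\dot u^2\leq -Cj'(0)\int\dot u^2$, taking $C$ large enough — depending only on $\O$, $V_0$, $U_0(f)$ and the uniform bounds on $u_{\b},p_{\b}$ over $\mathcal B(\partial\O)$ — makes $\mathrm d^2\mathcal J_{\partial\O}(\b)[h,h]<0$, a contradiction; hence $\H(\{\b=1\})>0$. The main technical point here is to make the construction and the bound $K$ uniform over all minimizers, i.e. to guarantee that a macroscopic plateau always supports a genuinely low-frequency admissible direction with controlled Dirichlet-to-Neumann cost, so that a single threshold $C$ serves for every solution $\b$.
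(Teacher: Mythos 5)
You have assembled the correct objects (adjoint state, switching function $\psi_{\b}=u_{\b}p_{\b}$, first-order conditions with multiplier $\mu$, and the boundary identity $\partial_\nu\psi_{\b}+2\b\psi_{\b}=u_{\b}j'(u_{\b})$), but part (i) is not proved: the step you defer to --- a ``free-boundary'' argument forcing $\H(\{\psi_{\b}=\mu\})>0$ and then intermediacy of $\b$ --- is the entire content of the theorem, and as sketched it cannot close, for two reasons. First, the first-order inclusions only go one way, $\{0<\b<1\}\subset\{\psi_{\b}=\mu\}$, so even if you proved $\H(\{\psi_{\b}=\mu\})>0$ this would say nothing about $\b$ being intermediate there ($\b$ may equal $0$ or $1$ on that level set). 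Second, boundary information alone cannot produce a contradiction; what is missing is an \emph{interior} argument. The paper's key step is to compute that $\psi_{\b}$ solves in $\O$ the equation $-\Delta\psi_{\b}=V\psi_{\b}+\langle B,\nabla\psi_{\b}\rangle$ with drift $B=-2\nabla p_{\b}/p_{\b}$ and potential $V=f/u_{\b}+2|\nabla p_{\b}|^2/p_{\b}^2\geq 0$; since $\psi_{\b}\geq 0$, the strong maximum principle forces the minimum of $\psi_{\b}$ over $\overline\O$ to be attained only on $\partial\O$, and Hopf's lemma gives $\partial_\nu\psi_{\b}(x^*)<0$ at any boundary minimum point $x^*$. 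If $\H(\{\b=0\})>0$ (true in particular for a bang-bang minimiser, where this set has measure $\H(\partial\O)-V_0>0$), the first-order conditions ($\psi_{\b}\leq\mu$ on $\{\b=0\}$, $\psi_{\b}\geq\mu$ on $\{\b>0\}$) let one place the minimum point inside $\{\b=0\}$, where your own identity gives $\partial_\nu\psi_{\b}=u_{\b}j'(u_{\b})>0$: contradiction. This in fact proves the stronger statement $\H(\{\b=0\})=0$ for \emph{every} minimiser, which yields (i) via the volume constraint, and which is also exactly what is needed in (ii).

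Part (ii) inherits this gap. Using only the statement of (i), you obtain a plateau $\omega=\{0<\b<1\}$ of measure $>V_0$ but possibly not of full measure, and you are then forced to look for perturbations \emph{supported} in $\omega$; your construction --- prescribe a harmonic $\dot u$ and recover $h=-(\partial_\nu\dot u+\b\dot u)/u_{\b}$ --- is incompatible with a support constraint on $h$ (one cannot prescribe both $\dot u$ and the support of $h$), and the uniformity of the constant $K$ and of the threshold $C$, which you yourself flag as the main technical point, is left unresolved; as written there is no proof. The paper's route avoids all of this: since the proof of (i) gives $\H(\{\b=0\})=0$, the contradiction hypothesis $\H(\{\b=1\})=0$ makes $\omega$ of \emph{full} measure, so every mean-zero $h\in L^\infty(\partial\O)$ is admissible, with no support restriction. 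One then takes the explicit choice $h=(\alpha_1\phi_1+\alpha_2\phi_2)/u_{\b}$ built from the Robin--Steklov eigenfunctions of \eqref{def:phik}, with $\alpha_1,\alpha_2$ chosen so that $\int_{\partial\O}h=0$; then $\dot u_{\b}$ is explicit (a combination of $\phi_1/\sigma_1$ and $\phi_2/\sigma_2$), Cauchy--Schwarz bounds the cross term by $K\sup_{[0,U_0(f)]}j'\left(\alpha_1^2+\alpha_2^2\right)^{1/2}\Vert\dot u_{\b}\Vert_{L^2(\partial\O)}$ with $K$ uniform over $\mathcal B(\partial\O)$ (Lemma \ref{lem:0823}), the concavity hypothesis gives $\int_{\partial\O}j''(u_{\b})\dot u_{\b}^2\leq -C\sup_{[0,U_0(f)]}j'\,\Vert\dot u_{\b}\Vert_{L^2(\partial\O)}^2$, and the two terms are compared through $\Vert\dot u_{\b}\Vert_{L^2(\partial\O)}\geq\left(\alpha_1^2+\alpha_2^2\right)^{1/2}/\sigma_2$ together with the Courant--Fischer bound $\sigma_2\leq\Lambda_2(\O)$; any $C>\Lambda_2(\O)K$ then makes $\ddot{\mathcal J}(\b)[h,h]<0$, contradicting the second-order necessary condition for minimality. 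Your low-frequency intuition is the right one, but without the full-measure plateau neither the construction nor the uniform choice of $C$ goes through.
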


Let us provide an example of function $j$ satisfying the assumptions of $(ii)$. Given $C>0$ and $U_0(f)>0$, the function $j$ given by
$$
j(u)=-\frac12 u^2+\frac12 \left(U_0(f)+\frac{1}{C}\right)u
$$
fulfills these conditions provided that $CU_0(f)< 1$. Since $U_0(f)$ does not depend on $C$, it suffices to chose $C$ small enough.

\begin{remark}
One has $U_0(f)<+\infty$ since $f\in L^\infty(\O)$. More precisely, one has
$$
\sup_{\beta\in \mathcal{B}(\partial\O)}\Vert u_\beta \Vert_{W^{1,p}(\O)}<+\infty
$$ 
for any $p$, by using standard elliptic regularity estimates which are detailed in Lemma \ref{Le:RegRobin}, and we conclude by using that the embedding $W^{1,p}(\O)\hookrightarrow \mathscr{C}^0(\overline{\O})$ is compact whenever $p$ is large enough.
\end{remark}
We refer to section \ref{Se:Tech} for comments on the proof.

%
%
%
%
\begin{center}
\fbox{\textsf{Distributed criteria}}
\end{center}

Here again, some of our methods naturally extend to the case of distributed criteria. The proof of the following result is very similar to that of Theorem \ref{Th:RelaxBound}, and we provide it in Appendix \ref{Ap:RelaxDist}. The minimisation problem under consideration is 
\begin{equation}\tag{$\bold P_{\min, \O,\mathcal B}$}\label{Eq:MinDist}
\min_{\beta\in \mathcal B(\partial \O)}\int_{ \O} j(u_\beta).
\end{equation} 

\begin{theorem}\label{Th:RelaxDist}
Let $\O$ be a bounded open set of $\R^n$ such that $\partial\O$ is $\mathscr{C}^2$. 
Assume $f$ satisfies \eqref{Eq:Hypf} and $j$ satisfies \eqref{Eq:Hypj}.
Then, any solution $\beta^*$ of \eqref{Eq:MinDist} satisfies 
$$
\Vol\left(\{0<\beta^*<1\}\right)>0.
$$ \end{theorem}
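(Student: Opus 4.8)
The plan is to prove Theorem \ref{Th:RelaxDist} by contradiction, following the same philosophy announced for Theorem \ref{Th:RelaxBound}(i): I would assume that an optimiser $\b$ of \eqref{Eq:MinDist} is \emph{not} bang-bang, so that $\Vol(\{0<\b<1\})=0$, and derive a contradiction with the optimality of $\b$ by constructing admissible perturbations that strictly decrease the distributed cost $\mathcal J_\O$. The natural device here is an \emph{oscillatory perturbation}: on the set where $\b$ takes interior values (or, in the bang-bang case one is trying to exclude, on a suitable portion of $\partial\O$ of positive $\mathcal H^{d-1}$-measure) one superimposes a highly oscillating, mean-zero admissible variation $h_n$, keeping the integral constraint $\int_{\partial\O}\b=V_0$ satisfied and preserving the pointwise bounds $0\le\b+th_n\le1$ for small admissible $t$.

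The key computational step is to expand $\mathcal J_\O(\b+th_n)$ to second order in $t$ and track the behaviour as the oscillation frequency $n\to\infty$. First I would compute the first derivative $\dot{\mathcal J}_\O(\b)[h]$ via the adjoint state: differentiating \eqref{Eq:MainRobin} gives that the shape derivative $\dot u$ of $u_\beta$ solves a linear Robin problem with the boundary datum $-h\,u_\beta$, and introducing the adjoint $p_\beta$ solving $-\Delta p_\beta=j'(u_\beta)$ with $\partial_\nu p_\beta+\beta p_\beta=0$ lets me write $\dot{\mathcal J}_\O(\b)[h]=-\int_{\partial\O}h\,u_\beta p_\beta$. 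Since $h$ has zero average and (by the strong maximum principle, as recorded after \eqref{Eq:Energy}) $u_\beta>0$ and $p_\beta>0$ on $\overline\O$, the first-order optimality condition forces the switching function $u_\beta p_\beta$ to be constant on $\{0<\b<1\}$; this is the first-order information I would extract. Then I would compute the second derivative $\ddot{\mathcal J}_\O(\b)[h,h]$ and perform the high-/low-frequency analysis: the point is that for the oscillating test functions $h_n$, the weak-$L^\infty{-}*$ limit of $h_n$ is $0$ but $h_n^2$ has a nontrivial weak limit, so the sign of the second variation in the high-frequency regime is governed by a term whose coefficient one can evaluate explicitly. The contradiction comes from showing this limiting second-order term has a definite sign incompatible with $\b$ being a minimiser (e.g. the quadratic form becomes strictly negative along the oscillation), which rules out interior values and is exactly the mechanism producing the relaxation phenomenon.

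Concretely, the steps in order are: (1) set up the state/adjoint pair and the first and second Gâteaux derivatives of $\mathcal J_\O$ with respect to $\beta$; (2) derive the first-order condition and identify the switching function $u_\beta p_\beta$; (3) reduce, on the putative set $\{0<\b<1\}$, to the frequency analysis of $\ddot{\mathcal J}_\O$ along mean-zero oscillations $h_n$ supported there; (4) compute the limiting quadratic form and exhibit the sign that contradicts optimality; (5) conclude that $\Vol(\{0<\b^*<1\})>0$ cannot be avoided, i.e. no bang-bang minimiser exists, which is precisely the stated relaxation. Throughout I would lean on the elliptic regularity bounds of Lemma \ref{Le:RegRobin} to justify that $u_\beta,p_\beta\in\mathscr C^0(\overline\O)$ and that the second-order expansion is legitimate.

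The main obstacle I anticipate is \textbf{step (3)--(4)}: carrying out the second-order frequency analysis on the boundary correctly. Unlike Theorem \ref{Th:BgbgBound}, where high frequencies are used to force a bang-bang structure, here the goal is the opposite sign, so one must show that the \emph{low}-frequency part of the second variation (as flagged in the remark preceding Theorem \ref{Th:RelaxBound}) dominates and makes a bang-bang profile strictly non-optimal. The delicate technical issue is that the perturbation $h_n$ lives on $\partial\O$ while the cost is distributed over $\O$, so I must control how a boundary oscillation propagates into $u_\beta$ and $j(u_\beta)$ inside the domain; this requires careful estimates on the boundary-to-interior map (the Robin-to-state solution operator) and on the weak limits of the products $u_{\beta+th_n}$-related quantities, ensuring the cross terms vanish and isolating the single term whose sign drives the conclusion.
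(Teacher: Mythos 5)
Your steps (1)--(2) coincide with the paper's starting point: the adjoint $p_\beta$ solving $-\Delta p_\beta=j'(u_\beta)$ in $\O$ with $\partial_\nu p_\beta+\beta p_\beta=0$ on $\partial\O$, the formula $\dot{\mathcal J}_{\O}(\beta)[h]=-\int_{\partial\O}h\,u_\beta p_\beta$, and the optimality conditions for the switching function $\Phi_{\beta^*}=u_{\beta^*}p_{\beta^*}$. The genuine gap is in steps (3)--(4). First, under \eqref{Eq:Hypj} you know only $j'>0$; nothing at all is assumed on $j''$. The second variation is
\begin{equation*}
\ddot{\mathcal J}_{\O}(\beta)[h,h]=-2\int_{\partial \O}h\dot u_\beta p_\beta+\int_{\O}j''(u_\beta)\left(\dot u_\beta\right)^2,
\end{equation*}
and after the integration by parts of Appendix \ref{Ap:Dist} the first term equals $2\int_\O \Psi_\beta|\n\dot u_\beta|^2-\int_\O\dot u_\beta^2\,\Delta\Psi_\beta+2\int_{\partial\O}\beta\Psi_\beta\dot u_\beta^2$ with $\Psi_\beta=p_\beta/u_\beta>0$: its leading part is \emph{positive}, which is precisely the mechanism that yields the bang-bang property for the maximisation problem. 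Hence, under \eqref{Eq:Hypj} alone, there is no way to force $\ddot{\mathcal J}_{\O}<0$ along any oscillation, high- or low-frequency. The ``low frequency'' mechanism you invoke from the remark preceding Theorem \ref{Th:RelaxBound} is used only for part (ii) of that theorem, which requires the additional strong concavity hypothesis $j''\le -Cj'(0)$ --- not available in Theorem \ref{Th:RelaxDist}. Second, even granting a sign, at a bang-bang profile the admissible perturbations are one-sided ($h\ge 0$ on $\{\beta^*=0\}$, $h\le 0$ on $\{\beta^*=1\}$), so the first variation need not vanish along your $h_n$, and a second-order contradiction cannot even be set up; moreover the first-order information you extract ($\Phi_{\beta^*}$ constant on $\{0<\beta^*<1\}$) is vacuous in the configuration you are trying to exclude, since that set is assumed null there. (Also note a terminological inversion: ``not bang-bang'' means $\Vol(\{0<\beta^*<1\})>0$; what you assume for contradiction is that $\beta^*$ \emph{is} bang-bang.)

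The paper's proof (Appendix \ref{Ap:RelaxDist}) is purely first order and uses no oscillation. One computes that $\Phi_{\beta^*}$ solves $-\Delta\Phi_{\beta^*}=V\Phi_{\beta^*}+\langle\n\Phi_{\beta^*},B\rangle$ with $V=f/u_{\beta^*}+j'(u_{\beta^*})/p_{\beta^*}+2|\n p_{\beta^*}|^2/p_{\beta^*}^2\ge 0$ and $B=-2\n p_{\beta^*}/p_{\beta^*}$, so the strong maximum principle forces the minimum of $\Phi_{\beta^*}$ over $\overline\O$ to be attained only on $\partial\O$, and Hopf's lemma gives $\partial_\nu\Phi_{\beta^*}(x^*)<0$ at any boundary minimum point $x^*$. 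On the other hand, the Robin conditions satisfied by $u_{\beta^*}$ and $p_{\beta^*}$ give $\partial_\nu\Phi_{\beta^*}+2\beta^*\Phi_{\beta^*}=0$ on $\partial\O$. If $\H\left(\{\beta^*=0\}\right)>0$, the first-order optimality conditions allow one to choose a minimum point $x^*$ in $\{\beta^*=0\}$, where then $\partial_\nu\Phi_{\beta^*}(x^*)=0$, a contradiction; hence $\H\left(\{\beta^*=0\}\right)=0$, and the constraint $\int_{\partial\O}\beta^*=V_0<\H(\partial\O)$ then forces $\H\left(\{0<\beta^*<1\}\right)>0$. This argument applies to every minimiser, bang-bang or not, and needs no second-order expansion at all. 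To repair your proposal, keep steps (1)--(2) and replace steps (3)--(5) by this maximum-principle/Hopf argument on the switching function.
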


\subsection{Second case: energetic criteria}\label{Se:Results2}
Let us now tackle the two energetic optimisation problems

\begin{equation}\tag{$\bold{Q}_{\min}$}\label{Eq:MinNRJ}
\min_{\beta\in \mathcal B(\partial \O)}\int_{\O} fu_\beta\end{equation}
and
\begin{equation}\tag{$\bold{Q}_{\max}$}\label{Eq:MaxNRJ} \max_{\beta\in \mathcal B(\partial \O)}\int_{\O} fu_\beta.
\end{equation}
The existence of optimisers for \eqref{Eq:MinNRJ}-\eqref{Eq:MaxNRJ} can be obtained by adapting the arguments of Lemma \ref{Le:Exist} below.

As we have noted earlier, a salient feature of these problems is that they can be rewritten in terms of the energy of the equation \eqref{Eq:MainRobin}:\begin{equation}
\int_{\O} fu_\beta=-2\min_{u\in W^{1,2}(\O)}\frac12 \int_{\O}|\nabla u|^2+\frac12\int_{\O}\beta u^2-\int_{\O}fu.
\end{equation} 
Let us mention two important consequences of this fact: first, these problems are self-adjoint (\emph{i.e.} the adjoint state used to express the gradient of the criterion coincides with $u_\beta$); second, as an infimum of linear functionals is concave, the criterion is convex. This is why we can expect a more precise description of the optimisers of this problem. 

Let us mention that two very related contributions to the study of this problem are \cite{BUCUR2017451,Bucur2017}, in which several problems  of minimising some energetic criteria are studied. The main difference with our case is that the authors of \cite{BUCUR2017451,Bucur2017} rather study the problem of optimising such criteria with respect to $\beta$ for the boundary conditions $\beta\partial_\nu u_\beta+u_\beta=0$, which significantly changes the behaviour of the functionals.

We sum up our results in the following Theorem:

\begin{theorem}\label{Theo:minmaxNRJ}
Let $\O$ be a bounded open set of $\R^n$ such that $\partial\O$ is $\mathscr{C}^2$. 
Assume $f$ satisfies \eqref{Eq:Hypf} and $j$ satisfies \eqref{Eq:Hypj}.
\begin{itemize}
\item[(i)]  Every solution $\b$ of the maximization problem~\eqref{Eq:MaxNRJ} is bang-bang: there exists $\Gamma^*\subset \partial \O$ such that $ \beta^*=\mathds 1_{\Gamma^*}$.
\item[(ii)] Let $v_\O$ denote the solution of the Dirichlet problem
\begin{equation}\begin{cases}
-\Delta v_\O=f&\text{ in }\O,\\ 
v_\O=0&\text{ on }\partial \O,
\end{cases}\end{equation} 
and let $V_0^\O$ given by
$$
V_0^\O=-\frac{1}{\Vert \partial_\nu v_\O\Vert_{L^\infty(\partial \O)}}\int_{\partial\O}\partial_\nu v_\O\in (0,\mathcal H^{d-1}(\partial \O)].
$$
For every $V_0\in (0,V_0^\O)$, the minimization problem~\eqref{Eq:MinNRJ} enjoys a relaxation property and has a unique solution $\b$ given by
\begin{equation}\label{betaStarCasPart}
\b=V_0\frac{-\partial_\nu v_\O}{\int_{\partial\O}-\partial_\nu v_\O}.
\end{equation}
\item[(iii)] \textbf{Case where $\boldsymbol{f(\cdot)=1}$.} Let us assume that $f\equiv 1$. The constant admissible profile $\b_{V_0}\equiv\frac{V_0}{\H\left(\partial\O\right)}$ solves the minimization problem~\eqref{Eq:MinNRJ} if, and only if $\O$ is a ball.
\end{itemize}
\end{theorem}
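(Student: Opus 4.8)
The plan is to build all three items on the \emph{convexity} of the compliance $\mathcal F(\beta)=\int_\O f u_\beta$, which, as observed in the text, holds because $\mathcal F(\beta)=-2\min_{u\in W^{1,2}(\O)}\mathcal E_\beta(u)$ is $-2$ times an infimum of maps affine in $\beta$, hence concave. Everything then hinges on a first- and second-order analysis of $\mathcal F$ on $\mathcal B(\partial\O)$.

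First I would compute the derivatives. For an admissible direction $h$ (with $\int_{\partial\O}h=0$), differentiating \eqref{Eq:MainRobin} shows that $\dot u$ solves $-\Delta\dot u=0$ in $\O$ and $\partial_\nu\dot u+\beta\dot u=-hu_\beta$ on $\partial\O$; using that the problem is self-adjoint, an integration by parts gives the clean expression $\dot{\mathcal F}(\beta)[h]=-\int_{\partial\O}h\,u_\beta^2$. Differentiating once more and substituting $h u_\beta=-(\partial_\nu\dot u+\beta\dot u)$ yields
$$\ddot{\mathcal F}(\beta)[h,h]=2\int_\O|\n\dot u|^2+2\int_{\partial\O}\beta\,\dot u^2.$$
This reproves convexity, and it vanishes only if $\dot u$ is a constant $c$ with $c^2\int_{\partial\O}\beta=c^2V_0=0$; since $V_0>0$ this forces $c=0$, whence $\dot u\equiv0$, then $hu_\beta\equiv0$, and finally $h=0$ because $u_\beta>0$. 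Thus $\mathcal F$ is \emph{strictly} convex, which in particular yields uniqueness of the minimiser for every admissible $V_0$.

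Item (i) is then immediate: a strictly convex functional attains its maximum over the convex set $\mathcal B(\partial\O)$ only at extreme points, for if a maximiser $\beta^*$ were written $\tfrac12(\beta_1+\beta_2)$ with $\beta_1\ne\beta_2$ then $\mathcal F(\beta^*)<\tfrac12\mathcal F(\beta_1)+\tfrac12\mathcal F(\beta_2)\le\mathcal F(\beta^*)$, a contradiction; since the extreme points of $\mathcal B(\partial\O)$ are exactly the functions $\mathds 1_\Gamma$ with $\H(\Gamma)=V_0$, this is the bang-bang property. For item (ii), strict convexity makes the first-order condition sufficient, so it suffices to verify that \eqref{betaStarCasPart} is admissible and stationary. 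Hopf's lemma gives $-\partial_\nu v_\O>0$, the divergence theorem gives $\int_{\partial\O}(-\partial_\nu v_\O)=\int_\O f>0$, and the bound $\b<1$ is exactly the requirement $V_0<V_0^\O$; a direct check shows $u_{\b}=v_\O+c$ with $c=\tfrac1{V_0}\int_{\partial\O}(-\partial_\nu v_\O)$, so $u_{\b}\equiv c$ on $\partial\O$ and the gradient $-u_{\b}^2\equiv-c^2$ is constant on the whole boundary $\{0<\b<1\}$, which is precisely the Euler--Lagrange (bathtub) condition for an interior profile. Relaxation follows since $\{0<\b<1\}$ is all of $\partial\O$.

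Finally, item (iii) reduces in both directions to an overdetermined boundary problem. If the constant profile $\bar\beta=V_0/\H(\partial\O)\in(0,1)$ minimises, then, being interior, it satisfies the same stationarity condition, forcing $u_{\bar\beta}^2$ and hence $u_{\bar\beta}$ (which is positive) to be a constant $c$ on $\partial\O$; then $w:=u_{\bar\beta}-c$ solves $-\Delta w=1$ in $\O$ with $w=0$ and $\partial_\nu w=-\bar\beta c$ constant on $\partial\O$, so $\O$ is a ball by Serrin's symmetry theorem. Conversely, if $\O$ is a ball, the rotational invariance of the problem together with the uniqueness of the minimiser forces that minimiser to be rotation-invariant, i.e. constant, hence equal to $\bar\beta$. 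The main obstacle is exactly this backbone: upgrading convexity to \emph{strict} convexity through the sign analysis of $\ddot{\mathcal F}$ (the only place where $V_0>0$ and $u_\beta>0$ are genuinely used), and then appealing to Serrin's theorem, whose moving-planes / $P$-function proof is the one deep external input behind the ball characterisation.
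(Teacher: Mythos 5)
Your proposal is correct, and its backbone is the same as the paper's: the self-adjoint/energetic structure making $\mathcal F$ convex, first-order (bathtub) optimality conditions that convexity renders both necessary and sufficient, verification that \eqref{betaStarCasPart} satisfies them, and Serrin's theorem for the ball characterisation. You diverge in three sub-arguments, all legitimately. First, you establish strict convexity in integral form: substituting $hu_\beta=-(\partial_\nu\dot u+\beta\dot u)$ and integrating by parts gives $\ddot{\mathcal F}(\beta)[h,h]=2\int_\O|\n\dot u|^2+2\int_{\partial\O}\beta\,\dot u^2$, whose vanishing forces $h=0$ using only $V_0>0$ and $u_\beta>0$; the paper instead expands $-hu_\beta$ in the Robin--Steklov basis \eqref{def:phik} and obtains $\ddot{\mathcal F}(\beta)[h,h]=2\sum_k\alpha_k(h)^2/\sigma_k$ --- the same quadratic form, just diagonalised. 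Your route is more elementary (it does not need the spectral appendix for this theorem); the paper's spectral form is the one it reuses elsewhere (e.g.\ in the proof of Theorem~\ref{Th:RelaxBound}(ii)). Second, for (i) you invoke the abstract fact that a strictly convex functional attains its maximum over a convex set only at extreme points, together with the identification (stated in the paper) of the extreme points of $\mathcal B(\partial \O)$ with the bang-bang profiles; the paper instead argues locally, perturbing a putative non-bang-bang maximiser by $h=\tilde\beta-\b$ supported on $\{0<\b<1\}$, where $\dot{\mathcal F}=0$ and $\ddot{\mathcal F}>0$ give a contradiction. Same idea, yours global, theirs local. Third, for the ``if'' direction of (iii) you use rotational invariance of the problem on a ball plus uniqueness of the minimiser to force the minimiser to be constant, whereas the paper relies on the converse implication in Serrin's statement (i.e.\ existence of the radial solution of the overdetermined problem in a ball, so that the constant profile verifies the optimality system). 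Your symmetry argument is an attractive alternative that avoids exhibiting the radial solution, but note it genuinely requires the uniqueness statement, so strict convexity must be in place before item (iii) --- which your ordering respects.
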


\subsection{Non-linear models: distributed criteria}\label{Se:NL}
Since one of our initial motivation is also to understand some optimal control problems that arise in mathematical biology, where the state equation is typically non-linear, we now state the relevant results in this context. It should be noted that this case is, at the notational level, heavier than the linear one, but that the methods are similar to that of the linear case. For this reason, we only give the proofs in Appendix \ref{Ap:NL} and only state our theorem for distributed criteria.

\paragraph{Analytic set-up} We fix a non-linearity $g=g(x,y)$, and we first assume 
\begin{equation}\label{Eq:Hg}\tag{$\bold H_{NL}$}\text{ $g$ is measurable in both variables and $\mathscr C^2$ in the second variable.}\end{equation}

Keeping in mind the mathematical biology motivation, we are looking for non-negative solutions $y_\beta\in W^{1,2}(\O)$ of the equation 
\begin{equation}\label{Eq:MainNL}
\begin{cases}
-\Delta y_\beta=g(x,y_\beta)\,, &\text{ in }\O\,, 
\\\frac{\partial y_\beta}{\partial \nu}+\beta y_\beta=0&\text{ on }\partial \O\,, 
\\ y_\beta\geq 0\,, y_\beta\neq 0.\end{cases}\end{equation}
We of course assume:
\begin{equation}\label{Eq:HExist}\tag{$\bold H_{WP}$}\text{ For any $\beta \in \mathcal B(\partial \O)$, there exists a unique solution $y_\beta\in W^{1,2}(\O)$ to \eqref{Eq:MainNL}.}\end{equation}

Since we are working with optimality conditions, we need to be allowed to differentiate the map $\beta\mapsto y_\beta$. This is possible, granted the steady-states $y_\beta$ are linearly stable: in other words, letting $\mu_\beta$ be the first eigenvalue of the linearised operator, \emph{i.e.} 
\begin{equation}\label{Eq:DefMu}\mu_\beta:=\inf_{\p\in W^{1,2}(\O)\,, \int_\O \p^2=1}\left(\int_\O |\n \p|^2-\int_\O \frac{\partial g}{\partial y}(\cdot,y_\beta)\p^2+\int_{\partial \O}\beta \p^2\right)\end{equation} we must have 
\begin{equation}\label{Eq:HStab}\tag{$\bold H_{\operatorname{stab}}$}\forall \beta \in \mathcal B(\partial \O)\,, \mu_\beta>0.\end{equation}

Finally, $W^{1,p}$-estimates on $y_\beta$ are crucial. We hence need to ensure that 
\begin{equation}\label{Eq:HReg}\tag{$\bold H_{\operatorname{reg}}$}\forall \beta\in \mathcal B(\partial \O),\quad  \forall p\in [1;+\infty),\quad  y_\beta\in W^{1,p}(\O).\end{equation}

These are the only assumptions we need on $g$. After stating the theorem, we explain why a large class of monostable non-linearities satisfies these conditions.

\paragraph{Optimisation problem} We assume $g$ satisfies \eqref{Eq:Hg}-\eqref{Eq:HExist}-\eqref{Eq:HStab}-\eqref{Eq:HReg}.
We still work with a function $j$ satisfying \eqref{Eq:Hypj} and define 
$$\mathcal R_\O(\beta):=\int_\O j(y_\beta).$$
We consider the optimisation problems

\begin{equation}\label{Eq:PvNL}\tag{$\bold R_{\max}$}\max_{\beta\in \mathcal B(\partial \O)}
\mathcal R_\O(\beta)
\end{equation} as well as

\begin{equation}\label{Eq:PvNLMin}\tag{$\bold R_{\min}$}
\min_{\beta\in \mathcal B(\partial \O)}\mathcal R_\O(\beta).
\end{equation} 

 Under the assumption that $g$ satisfies  \eqref{Eq:Hg}-\eqref{Eq:HExist}-\eqref{Eq:HStab}-\eqref{Eq:HReg}, the existence of solutions to \eqref{Eq:PvNL}-\eqref{Eq:PvNLMin} is proved following the arguments of Lemma \ref{Le:Exist} below. Our main theorem is 
\begin{theorem}\label{Theo:NL}
Assume $j$ satisfies \eqref{Eq:Hypj} and $g$ satisfies  \eqref{Eq:Hg}-\eqref{Eq:HExist}-\eqref{Eq:HStab}-\eqref{Eq:HReg}. Then:
\begin{enumerate}
\item Any solution of \eqref{Eq:PvNL} is of bang-bang type. In other words, for every maximiser $\beta^*$ of \eqref{Eq:PvNL}, there exists a measurable subset $\Gamma^*$ of $\partial \O$ such that $\beta^*=\mathds 1_{\Gamma^*}$.
\item Any solution $\beta^*$ of \eqref{Eq:PvNLMin} satisfies 
$$
\H\left(\{0<\beta^*<1\}\right)>0.
$$ \end{enumerate}
\end{theorem}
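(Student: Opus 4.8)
The plan is to mirror the linear arguments of Theorems~\ref{Th:BgbgBound} and~\ref{Th:RelaxBound}, the only genuinely new ingredient being the bookkeeping forced by the nonlinearity $g$. First I would record the differentiability of $\beta\mapsto y_\beta$: thanks to the stability assumption~\eqref{Eq:HStab}, the linearised operator $\phi\mapsto-\Delta\phi-\frac{\partial g}{\partial y}(\cdot,y_\beta)\phi$ with Robin conditions is invertible, so the implicit function theorem yields that $\beta\mapsto y_\beta$ is twice differentiable from $L^\infty(\partial\O)$ into $W^{1,p}(\O)$, the relevant bounds being uniform by~\eqref{Eq:HReg}. Existence of optimisers for~\eqref{Eq:PvNL}--\eqref{Eq:PvNLMin} follows from the arguments of Lemma~\ref{Le:Exist}. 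Given an admissible perturbation $h$ (that is, $\int_{\partial\O}h=0$ with $h\geq0$ on $\{\beta^*=0\}$ and $h\leq0$ on $\{\beta^*=1\}$), the derivative $\dot y$ of $y_\beta$ in direction $h$ solves $-\Delta\dot y-\frac{\partial g}{\partial y}(\cdot,y_\beta)\dot y=0$ in $\O$ with $\partial_\nu\dot y+\beta\dot y=-h\,y_\beta$ on $\partial\O$. Introducing the adjoint state $p_\beta$ solving $-\Delta p_\beta-\frac{\partial g}{\partial y}(\cdot,y_\beta)p_\beta=j'(y_\beta)$ in $\O$ with $\partial_\nu p_\beta+\beta p_\beta=0$, a double integration by parts gives
\[
\dot{\mathcal R}_\O(\beta)[h]=-\int_{\partial\O}h\,y_\beta\,p_\beta.
\]
Since $j'(y_\beta)>0$ by~\eqref{Eq:Hypj} while $y_\beta>0$, the maximum principle together with~\eqref{Eq:HStab} forces $p_\beta>0$, hence $y_\beta p_\beta>0$ on $\partial\O$.

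Next I would differentiate once more. Writing $\ddot y$ for the second derivative of $\beta\mapsto y_\beta$ in direction $h$, it solves $-\Delta\ddot y-\frac{\partial g}{\partial y}(\cdot,y_\beta)\ddot y=\frac{\partial^2 g}{\partial y^2}(\cdot,y_\beta)\dot y^2$ with $\partial_\nu\ddot y+\beta\ddot y=-2h\dot y$ on $\partial\O$. Expanding $\ddot{\mathcal R}_\O(\beta)[h,h]=\int_\O j''(y_\beta)\dot y^2+\int_\O j'(y_\beta)\ddot y$ and eliminating $\ddot y$ against $p_\beta$ exactly as above yields
\[
\ddot{\mathcal R}_\O(\beta)[h,h]=\int_\O\Big(j''(y_\beta)+\tfrac{\partial^2 g}{\partial y^2}(\cdot,y_\beta)\,p_\beta\Big)\dot y^2-2\int_{\partial\O}h\,\dot y\,p_\beta,
\]
the nonlinear counterpart of the quadratic form used in the linear case. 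Everything then rests on a frequency analysis of this expression.

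For the bang-bang statement~(1), I would argue by contradiction, assuming $\omega:=\{0<\beta^*<1\}$ has positive $\H$-measure. Restricting to $\omega_\delta=\{\delta<\beta^*<1-\delta\}$, which has positive measure for $\delta$ small, first-order optimality along zero-mean perturbations supported in $\omega_\delta$ forces the switching function $y_{\beta^*}p_{\beta^*}$ to be constant on $\omega$. I would then feed in a sequence of highly oscillating, zero-mean perturbations $h_n$ supported in $\omega_\delta$, as in~\cite{MNP2021}: as the frequency grows, the boundary datum $-h_n y_{\beta^*}$ oscillates rapidly, so the linearised state $\dot y_n$ concentrates near $\partial\O$ with $\dot y_n|_{\partial\O}\sim -h_n y_{\beta^*}/n$ and $\|\dot y_n\|_{L^2(\O)}\to0$. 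Consequently the interior integral in the second-order formula is $O(n^{-3})$ while the boundary integral is $-2\int_{\partial\O}h_n\dot y_n p_{\beta^*}\sim\frac2n\int_{\partial\O}h_n^2\,y_{\beta^*}p_{\beta^*}$. As $y_{\beta^*}p_{\beta^*}>0$, this is strictly positive for $n$ large, and since $h_n$ is first-order neutral, $\ddot{\mathcal R}_\O(\beta^*)[h_n,h_n]>0$ contradicts maximality; hence $\H(\omega)=0$ and $\beta^*=\mathds 1_{\Gamma^*}$. The rigorous justification of the concentration asymptotics for $\dot y_n$ is the technical heart of this half.

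For the relaxation statement~(2) the same machinery must be read in reverse, and this is where I expect the main obstacle to lie. The high-frequency computation above produces a strictly positive second variation, which is entirely compatible with minimality and thus gives no information; one must instead probe the quadratic form at low frequency. Mirroring the proof of Theorem~\ref{Th:RelaxBound}, I would suppose $\beta^*$ were bang-bang and seek an admissible, first-order-neutral perturbation $h$ along which $\ddot{\mathcal R}_\O(\beta^*)[h,h]<0$: for slowly varying $h$ the interior term is $O(1)$ and the boundary term $-2\int_{\partial\O}h\,\dot y\,p_{\beta^*}$ is no longer sign-definite, since $p_{\beta^*}$ and $y_{\beta^*}$ do not coincide as they would in the energetic case. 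The crux is to show that the free-boundary configuration $\beta^*=\mathds 1_{\{y_{\beta^*}p_{\beta^*}>\ell\}}$ dictated by the first-order conditions is incompatible with the second-order conditions, thereby forcing $\H(\{0<\beta^*<1\})>0$. Controlling the low-frequency dependence of $\dot y$ on $h$, while keeping track of the coupling between $y_{\beta^*}$ and $p_{\beta^*}$ introduced by the term $\frac{\partial^2 g}{\partial y^2}(\cdot,y_{\beta^*})p_{\beta^*}$, is the delicate step, and is the one for which I would defer the full estimates to Appendix~\ref{Ap:NL}.
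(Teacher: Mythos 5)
Your setup (the linearised state equation, the adjoint $p_\beta$, and the formulas for $\dot{\mathcal R}_\O$ and $\ddot{\mathcal R}_\O$) coincides with the paper's, but both halves of your argument have genuine gaps. For part~(1), the positivity of the second variation cannot be extracted from the heuristic $\dot y_n|_{\partial\O}\sim -h_n y_{\beta^*}/n$, $\|\dot y_n\|_{L^2(\O)}\to 0$ with the claimed $O(n^{-3})$ versus $1/n$ rates. The paper's mechanism is different: it rewrites the boundary term $-2\int_{\partial\O}h\dot y_\beta p_\beta$ using $\Psi_\beta=p_\beta/y_\beta$ and Green's identity, producing the coercive term $2\int_\O\Psi_\beta|\nabla \dot y_\beta|^2$ plus controllable errors (the analogue of Proposition \ref{Pr:Rayleigh}), and then requires \emph{two independent} smallness conditions on $\dot y_\beta$: the boundary estimate $\Vert\dot y_\beta\Vert_{L^2(\partial\O)}\leq\sqrt{\delta}\,\Vert\nabla\dot y_\beta\Vert_{L^2(\O)}$, obtained by annihilating the first $K$ modes of an eigenbasis, and the interior estimate $\Vert\dot y_\beta\Vert_{L^2(\O)}\leq\e\Vert\dot y_\beta\Vert_{W^{1,2}(\O)}$, which does \emph{not} follow from high-frequency oscillation (the eigenbasis controls boundary norms, not interior ones) and is proved by a separate compactness argument: the space of linearised states generated by perturbations in $E_\delta$ is infinite dimensional, and a uniform reverse inequality would contradict Rellich--Kondrachov. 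Your bookkeeping silently assumes both estimates; this is exactly the ``technical heart'' you acknowledge but do not supply. Moreover, in the nonlinear setting the relevant eigenbasis is not the Steklov--Robin family \eqref{def:phik} but the family \eqref{NLEq:Phik} adapted to the linearised operator $-\Delta-\frac{\partial g}{\partial u}(\cdot,y_\beta)$, and its very existence (well-posedness, compactness and positivity of the resolvent) rests on the stability assumption \eqref{Eq:HStab}; your proposal never addresses this point, which is the genuinely new ingredient of the nonlinear proof.

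For part~(2) you take the wrong route. The paper does not use a low-frequency second-order analysis here: that tool is reserved for Theorem \ref{Th:RelaxBound}(ii), a different statement requiring extra hypotheses on $j''$ which has no counterpart in Theorem \ref{Theo:NL}. The proof of part~(2) (as in Theorem \ref{Th:RelaxBound}(i) and Appendix \ref{Ap:RelaxDist}) is purely first order: the optimality conditions give a level-set structure for the switching function $\Phi_{\beta^*}=y_{\beta^*}p_{\beta^*}$; one computes that $\Phi_{\beta^*}$ solves an elliptic equation of the form $-\Delta\Phi=V\Phi+\langle\nabla\Phi,B\rangle$, so the strong maximum principle and Hopf's lemma at a boundary minimum point chosen in $\{\beta^*=0\}$ contradict the Robin-type boundary condition satisfied by $\Phi_{\beta^*}$ there; hence $\H\left(\{\beta^*=0\}\right)=0$, and the volume constraint $\int_{\partial\O}\beta^*=V_0<\H\left(\partial\O\right)$ then forces $\H\left(\{0<\beta^*<1\}\right)>0$. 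Your plan --- assume $\beta^*$ bang-bang and exhibit a first-order-neutral $h$ with $\ddot{\mathcal R}_\O(\beta^*)[h,h]<0$ --- runs into an obstruction you do not mention: at a bang-bang minimiser, admissible perturbations must satisfy $h\geq0$ on $\{\beta^*=0\}$ and $h\leq0$ on $\{\beta^*=1\}$, so the two-signed low-frequency perturbations of the type $h_{\alpha_1,\alpha_2}$ used in the paper's low-frequency argument are not admissible (that argument is deployed precisely when $\H\left(\{\beta^*=0\}\right)=\H\left(\{\beta^*=1\}\right)=0$). Since you defer the entire ``crux'' of this half to an unwritten appendix, part~(2) is not proved.
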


\paragraph{An application to the optimal design of barriers in logistic models}
Let us give an example of a class of non-linearities $g$ satisfying \eqref{Eq:Hg}-\eqref{Eq:HExist}-\eqref{Eq:HStab}-\eqref{Eq:HReg}. We consider any function $m\in L^\infty(\O)$ such that 
\begin{equation}\label{Eq:IB} \int_\O m>\int_{\partial \O}\beta=V_0.\end{equation} Then the non-linearity 
$$g_\beta:(x,y)\mapsto y\left(m(x)-y\right)$$ satisfies \eqref{Eq:Hg}-\eqref{Eq:HExist}-\eqref{Eq:HStab}-\eqref{Eq:HReg}. To see why this is the case, we first observe that we are exactly in the context of monostable non-linearities, which, adapting the arguments of \cite{BHR}, yields the existence and uniqueness of a solution $y_\beta$ of \eqref{Eq:MainNL}, that further satisfies $y_\beta\leq \Vert m\Vert_{L^\infty}$. This readily gives the regularity estimates of Assumption \eqref{Eq:HReg}. Finally, we observe that, as $y_\beta\neq 0$ is a non-negative eigenfunction, associated with eigenvalue $0$, the monotonicity of the eigenvalue ensures that the first eigenvalue $\mu_\beta$ of $-\Delta -(m-2y_\beta)$ is positive for any $\beta$.

This equation models a population density that can access resources, accounted for in this scenario by the function $m$. For more references on the modelling on such phenomena we refer to \cite{MazariThese}, as well as to section \ref{Se:Bib} below. If we take $j(x)=x$, the optimisation problem \eqref{Eq:PvNL} (resp. \eqref{Eq:PvNLMin}) can be interpreted as: how should we design the features of the fence around the domain in order to maximise (resp. minimise) the population size? In this context, the bang-bang property has been deemed to be a relevant aspect of the qualitative analysis of optimisers \cite{MNP2021,NagaharaYanagida}. 

\subsection{Comments on the proof of the bang-bang property and technical context}\label{Se:Tech}
In this section, we comment upon our methods of proofs in order to provide some context regarding the tools we introduce to analyse the bang-bang property for the bilinear optimal control problems under consideration.
\paragraph{Oscillatory techniques used here}

 The bang-bang property is often proved \emph{via} the following reasoning: one determines the so-called "switch function" of the control problem. This function, say $\phi$, encodes the optimality conditions. If an optimiser is not bang-bang, this  usually implies that $\phi$ has a level-set of positive measure. To conclude, unique continuation theorems are invoked, that prove that this can not be the case since $\phi$ often satisfies a certain well-behaved optimality system.

However, this strategy is mostly useful for energetic problems. For non-energetic problems, such as the ones considered here, the equation solved by the switching function is usually not tractable. We refer for instance to \cite{MNP}. Instead, the method we introduce for Robin boundary conditions hinges on the one we introduced in \cite{MNP2021}. In it, we show that, for certain distributed bilinear optimal control problems, the second order derivative of the functional to optimise writes as something very similar to a Rayleigh quotient of a certain operator $\mathcal L$, in terms of $\dot u_\beta[h]$. Here, $\dot u_\beta[h]$ denotes the derivative of $\beta\mapsto u_\beta$ in a direction $h$. Concretely, this means that, the functionals under consideration in Theorems \ref{Th:BgbgBound}, \ref{Th:BgbgDist} and \ref{Theo:NL} being denoted generically $\mathcal J$ and double dots standing for second-order derivatives in a direction $h$, we derive an estimate of the form 
$$\ddot{\mathcal J}\gtrsim A\Vert \dot u_\beta[h]\Vert_{W^{1,2}(\O)}^2-B\Vert \dot u_\beta[h]\Vert_{L^2(\O)}^2$$ for some constants $A$ and $B$. The monotonicity of the functional enables to choose a positive $A$. We refer to Proposition \ref{Pr:Rayleigh} for a precise statement. 
Such an estimate allows to prove the bang-bang property: by assuming that a maximiser $\beta^*$ is not bang-bang, we show that there exists a perturbation $h$ that has a suitable support and such that $\dot u_\beta[h]$, has, in the spectral basis of $\mathcal L$, only high modes.  Thus, the second order derivative of the functional can be made arbitrarily high, and we can reach a contradiction. This idea is one of the key points of the proof of Theorems \ref{Th:BgbgBound}, \ref{Th:BgbgDist} and \ref{Theo:NL}. Of course, several points need to be refined in order to make this scheme suitable for the present context.

Second, and this is also a novelty of the article, we show that the same expression of the second order derivative in terms of Rayleigh quotients allows to prove relaxation phenomena for minimisation problem. Contrary to the bang-bang property, the method rests upon the use of low-eigenmodes, and it is at the center of Theorem \ref{Th:RelaxBound} and \ref{Th:RelaxDist}.

\paragraph{Relationship with other existence theorems in shape optimisation}
A particularly crucial step in all our proofs is the monotonicity of the functionals we optimise. Concretely, consider the problem \eqref{Eq:MaxBound}. Then one of the central points of the proof of Theorem \ref{Th:BgbgBound} is that the functional $\mathcal J_{\partial \O}(\beta)=\int_{\partial \O} j(u_\beta)$ is increasing. This implies that, from a shape optimization perspective, the functional $\Gamma\mapsto \int_{\partial \O} j(w^{\Gamma})$ is increasing. In this context, it is tempting to use the seminal theorem of Buttazzo-DalMaso \cite{BDM} to conclude that there exists a solution of \eqref{Eq:MaxBoundSo}. However, the topology on sets required by \cite{BDM} is not suited to our type of problems: in \cite{BDM}, the convergence on sets is the $\Gamma$-convergence; unfortunately, it is not clear that this topology makes the functional we consider here continuous, which prohibits using this result. Furthermore, the strategies of proof are very different. Nevertheless, we wish to highlight the fact that, as in \cite{BDM}, the crux of the problem is the monotonicity of the functional.

\subsection{Related works and bibliographical references}\label{Se:Bib}

\paragraph{Optimisation of criteria involving the Robin coefficients} Two lines of research coexist when it comes to optimisation in combination with Robin boundary conditions, one dedicated to the type of problems we consider here, where the domain $\O$ is fixed and $\beta$ is the variable, and another one focused on optimising the domain $\O$ itself. Some contributions combine these two approaches.

Among the vast literature relevant in such queries, let us first single out \cite{MR3009728}  and \cite{MR1623313}. Both papers deal with time-dependent equation, and focus on "tracking-type" functionals. In other words, in the framework of our paper, this would mean that the functional to optimise would involve a term of the form $\Vert u_\beta-u_{\operatorname{ref}}\Vert_X^2$ for some norm $X$ and some reference $u_{\operatorname{ref}}$. Such criteria are known to be very relevant in many applications. Our paper, on the other hand, focuses on "free" functionals, and the methods used to analyse each problem are necessarily very different.

The aforementioned \cite{BUCUR2017451,Bucur2017} investigate the  properties of the optimisers $\beta$ that minimise the natural energy of the model or some related eigenvalues. Although Theorem \ref{Theo:minmaxNRJ} is closely related to these contributions, let us highlight the fact that they consider boundary conditions of the form $\beta \partial_\nu u+u=0$, which changes the features of the problems. This contribution falls into the first category described above (energetic functionals), but it is notable that the motivation in \cite{BUCUR2017451,Bucur2017} is an optimal insulation problem which was then deeply explored from the point of view of both optimal control and shape optimisation in the recent {\cite{DellaPietra2021}}. Notable in the latter is the fact that one of their main results, \cite[Theorem 5.1]{DellaPietra2021} uses Talenti-like inequalities for Robin boundary conditions, which has been another very active line of research following \cite{Alvino2019ATC}.

\paragraph{Bilinear control problems}

Let us first underline that the study of bilinear controllability of systems (\emph{i.e.} trying to reach an exact state using a bilinear control) is a very active field. We refer, for instance, to \cite{Alabau,Beauchard2010,Floridia}. 

On the other hand, bilinear optimal control problems, in which one rather aims at optimising a certain criterion, as is the case in the present paper, have received less attention. We have already mentioned several contributions related to the optimisation of Robin coefficients in order to minimise tracking-type functionals; in this broader context of bilinear optimisation, let us also refer to \cite{Fister,GuillnGonzlez2020}, where bilinear optimal control problems for chemotaxis or chemorepulsion models are studied.  The functional the authors wish to minimise is also of tracking type, but the control acts on the interior of the domain rather than on the boundary.  Most of the emphasis is placed on deriving existence results and optimality conditions. Related to these contributions, we point, for another distributed bilinear optimal control problem, to \cite{Borzi},  a contribution that focuses on a numerical multigrid analysis of the optimisation system.  

However, the literature is scarcer when it comes to the qualitative analysis of bilinear optimal control problems when the functionals is not of tracking type. A paradigmatic example is that of the optimisation of the total population size in monostable models. In this problem, which originated in \cite{LouInfluence} the goal is to spread resources so as to maximise the integral of the solution of a reaction-diffusion equations. In the elliptic case, proving the bang-bang property for optimal resources distributions proves surprisingly difficult, and the problem exhibits a very intricate qualitative behaviour. We refer to \cite{BaiHeLi,BINTZ2020,LouNagaharaYanagida,MNP,MRBSIAP,NagaharaYanagida} and the references therein for an overview of this problem. It should be noted that the bang-bang property was only recently proved by the two authors in collaboration with a third one in \cite{MNP2021}, using a method that, as was explained, is linked to the one we develop here for Robin controls.

\section{Preliminary material}\label{Se:Prelim}

We first give some existence results and regularity estimates for the solutions of \eqref{Eq:MainRobin}. Since the proofs are standard, we only give them below in Appendix \ref{Ap:Prelim}.
We begin with a uniform estimate:
\begin{lemma}\label{Le:Est}Let $\O$ be a bounded domain in $\R^d$ with a $\mathscr C^2$ boundary. There exists $C>0$ such that
$$
\forall \beta\in \mathcal B(\partial \O),\quad \forall v\in W^{1,2}(\O),\quad  C\Vert v\Vert_{W^{1,2}(\O)}^2\leq \int_\O |\nabla v|^2 +\int_{\partial\O}\beta v^2.
$$
\end{lemma}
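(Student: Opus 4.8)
The plan is to argue by contradiction, using a normalisation followed by a compactness argument in which the fixed mass constraint $\int_{\partial\O}\beta=V_0>0$ plays the decisive role. Suppose the claimed estimate fails. Then for every $n\in\N$ one can find $\beta_n\in\mathcal B(\partial\O)$ and $v_n\in W^{1,2}(\O)$ such that
$$\int_\O|\n v_n|^2+\int_{\partial\O}\beta_n v_n^2<\frac1n\Vert v_n\Vert_{W^{1,2}(\O)}^2.$$
After dividing by $\Vert v_n\Vert_{W^{1,2}(\O)}^2$ I may assume $\Vert v_n\Vert_{W^{1,2}(\O)}=1$, so that the two nonnegative quantities $\int_\O|\n v_n|^2$ and $\int_{\partial\O}\beta_n v_n^2$ both tend to $0$.

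Next I would extract limits. Since $(v_n)$ is bounded in $W^{1,2}(\O)$, up to a subsequence it converges weakly in $W^{1,2}(\O)$ to some $v$, strongly in $L^2(\O)$ by the Rellich--Kondrachov theorem, and strongly in $L^2(\partial\O)$ by compactness of the trace operator $W^{1,2}(\O)\hookrightarrow L^2(\partial\O)$ (valid since $\partial\O$ is $\mathscr C^2$). By weak lower semicontinuity of the Dirichlet energy, $\int_\O|\n v|^2\leq\liminf_n\int_\O|\n v_n|^2=0$, hence $\n v\equiv0$; as $\O$ is connected, this forces $v\equiv c$ for some constant $c$. Moreover, $\int_\O|\n v_n|^2\to0$ together with $\Vert v_n\Vert_{W^{1,2}(\O)}=1$ gives $\int_\O v_n^2\to1$, and the strong $L^2(\O)$ convergence then yields $c^2|\O|=1$, so in particular $c\neq0$.

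Finally I would reach the contradiction from the boundary term. Writing
$$\int_{\partial\O}\beta_n v_n^2=\int_{\partial\O}\beta_n\bigl(v_n^2-c^2\bigr)+c^2\int_{\partial\O}\beta_n,$$
the first summand is bounded by $\Vert v_n^2-c^2\Vert_{L^1(\partial\O)}$ because $0\leq\beta_n\leq1$, and this tends to $0$ since $v_n\to c$ in $L^2(\partial\O)$; the second summand equals $c^2V_0$ because $\int_{\partial\O}\beta_n=V_0$ for every $n$. Hence $\int_{\partial\O}\beta_n v_n^2\to c^2V_0$, which must equal $0$ by construction, forcing $c=0$ and contradicting $c\neq0$.

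The two places where the hypotheses are genuinely used are exactly where any care is needed: the constraint $V_0>0$ (without it, $\beta\equiv0$ would defeat any such uniform bound, since constants have vanishing gradient and zero boundary penalty), and the connectedness of $\O$ (so that a gradient-free limit is constant). The compactness of the trace map is the essential analytic input, and everything else is routine. The one subtlety is keeping the bound \emph{uniform} in $\beta$, which the contradiction scheme handles automatically: it never requires identifying the limit of $(\beta_n)$ in any topology, only the fixed value $V_0$ of its integral.
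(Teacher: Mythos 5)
Your proof is correct and follows essentially the same route as the paper's: a contradiction argument with a normalised sequence, Rellich--Kondrachov plus compactness of the trace operator, identification of the weak limit as a nonzero constant via connectedness, and the mass constraint $\int_{\partial\Omega}\beta_n=V_0$ to conclude. The only minor difference is that the paper extracts a weak-$*$ limit $\beta$ of the $\beta_n$ (using compactness of $\mathcal B(\partial\Omega)$) and passes to the limit in the boundary term, whereas your decomposition $\int_{\partial\Omega}\beta_n v_n^2=\int_{\partial\Omega}\beta_n\bigl(v_n^2-c^2\bigr)+c^2V_0$ bypasses that extraction entirely --- a small but genuine simplification.
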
\begin{lemma}\label{Le:RegRobin}Assume $f\in L^\infty(\O)$. Then the equation \eqref{Eq:MainRobin} has a unique solution $u_\beta\in W^{1,2}(\O)$. Furthermore, for any $p\in [1;+\infty)$, $u_\beta\in W^{1,p}(\O)$, 
$$\sup_{\beta\in \mathcal B(\partial \O)}\Vert u_\beta\Vert_{W^{1,p}(\O)}<\infty$$ and
$$\forall \beta \in \mathcal B(\partial \O)\,, \inf_{\overline \O} u_\beta>0.$$ 

In particular, as a consequence of Sobolev embeddings,  we have the uniform estimates
$$0<\inf_{\beta\in \mathcal B(\partial \O)}\inf_\O u_\beta\leq \sup_{\beta\in \mathcal B(\partial \O)}\sup_\O u_\beta<\infty.$$

\end{lemma}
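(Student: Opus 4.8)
The plan is to establish the four assertions of the statement—well-posedness in $W^{1,2}(\O)$, $W^{1,p}$-regularity, the uniform-in-$\beta$ bounds, and the (uniform) strict positivity—in that order, each step feeding the next. First I would obtain existence and uniqueness by the Lax--Milgram theorem applied to the bilinear form $a_\beta(u,v)=\int_\O \n u\cdot\n v+\int_{\partial\O}\beta uv$: its continuity follows from $\beta\in L^\infty$ together with the continuity of the trace $W^{1,2}(\O)\to L^2(\partial\O)$, while its coercivity is exactly the content of Lemma~\ref{Le:Est}, with a constant $C$ independent of $\beta$ (equivalently, one minimises the strictly convex coercive functional $\mathcal E_\beta$ of \eqref{Eq:Energy} by the direct method). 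Testing the weak formulation of \eqref{Eq:MainRobin} against $u_\beta$ gives $\int_\O|\n u_\beta|^2+\int_{\partial\O}\beta u_\beta^2=\int_\O f u_\beta$; bounding the left-hand side below by $C\|u_\beta\|_{W^{1,2}}^2$ through Lemma~\ref{Le:Est} and the right-hand side above by $\|f\|_{L^2}\|u_\beta\|_{W^{1,2}}$ yields a $W^{1,2}$-bound depending only on $\|f\|_{L^2}$ and $\O$, hence uniform in $\beta\in\mathcal B(\partial\O)$.

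Next I would promote this to a uniform $L^\infty$-bound by Stampacchia's truncation method. Testing against $(u_\beta-k)_+$ for $k\ge0$ and using that on $\{u_\beta>k\}\cap\partial\O$ one has $\beta\ge0$ and $u_\beta>0$, so that the boundary contribution $\int_{\partial\O}\beta u_\beta(u_\beta-k)_+$ is nonnegative, I get $\int_\O|\n(u_\beta-k)_+|^2\le\|f\|_{L^\infty}\int_{\{u_\beta>k\}}(u_\beta-k)_+$. For $k$ large the superlevel set occupies at most half of $\O$ (by the $W^{1,2}$-bound and Chebyshev), so a Poincar\'e--Sobolev inequality for functions vanishing on a set of measure $\ge|\O|/2$ applies, with constant depending only on $\O$. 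The resulting decay inequality for $|\{u_\beta>h\}|$ carries an exponent strictly larger than $1$, so Stampacchia's lemma forces $u_\beta\le M$ with $M$ depending only on $\|f\|_{L^\infty}$, the $W^{1,2}$-bound and $\O$; in particular $M$ is independent of $\beta$, which already gives $\sup_\beta\sup_\O u_\beta<\infty$.

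With the $L^\infty$-bound in hand, the Neumann-type datum $g_\beta:=-\beta u_\beta$ satisfies $\|g_\beta\|_{L^\infty(\partial\O)}\le\|u_\beta\|_{L^\infty(\O)}\le M$ since $0\le\beta\le1$. Viewing \eqref{Eq:MainRobin} as the Neumann problem $-\Delta u_\beta=f\in L^\infty(\O)$, $\partial_\nu u_\beta=g_\beta\in L^\infty(\partial\O)$ on the $\mathscr C^2$ domain $\O$, standard $L^p$ elliptic regularity (a bootstrap on the integrability of the trace) yields $u_\beta\in W^{1,p}(\O)$ for every $p<\infty$, with norm controlled by $\|f\|_{L^\infty}$ and $M$ alone, hence $\sup_\beta\|u_\beta\|_{W^{1,p}}<\infty$. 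I expect this regularity step to be the main technical obstacle: the coefficient $\beta$ is merely bounded, so multiplying the trace by $\beta$ destroys fractional regularity and leaves only $L^q$ of the flux, preventing any iteration to $W^{2,p}$; some care is needed to phrase the $L^p$-estimate for the Robin/Neumann problem with constants depending only on the data and on $\O$, not on $\beta$.

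Finally, for positivity, each $u_\beta$ is superharmonic ($-\Delta u_\beta=f\ge0$) and nonnegative (its nonnegativity follows by testing against its negative part and using $f\ge0$, $\beta\ge0$), so the strong maximum principle gives $u_\beta>0$ in $\O$; a boundary minimum at $x_0\in\partial\O$ with $u_\beta(x_0)=0$ would force $\partial_\nu u_\beta(x_0)<0$ by Hopf's lemma, contradicting the Robin condition $\partial_\nu u_\beta(x_0)=-\beta(x_0)u_\beta(x_0)=0$; hence $\inf_{\overline\O}u_\beta>0$ for each fixed $\beta$. To make this uniform I would argue by contradiction and compactness: if $\inf_{\overline\O}u_{\beta_n}\to0$ along some sequence, then by the uniform $W^{1,p}$-bound with $p>d$ and the embedding $W^{1,p}\hookrightarrow\mathscr C^0(\overline\O)$ one extracts $u_{\beta_n}\to u_*$ in $\mathscr C^0(\overline\O)$ and $\beta_n\rightharpoonup\beta_*\in\mathcal B(\partial\O)$ weakly-$*$; passing to the limit in the weak formulation (the boundary term converges because $u_{\beta_n}\to u_*$ uniformly on $\partial\O$ while $\beta_n\rightharpoonup\beta_*$) identifies $u_*=u_{\beta_*}$ by uniqueness, so that $\min_{\overline\O}u_*=0$ contradicts the pointwise positivity just established. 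This yields $\inf_\beta\inf_\O u_\beta>0$ and completes the chain of uniform estimates.
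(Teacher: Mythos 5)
Your proof is correct, and it reaches the same conclusions as the paper but through a genuinely different middle section. The paper's proof goes: energy minimisation for existence and uniqueness (essentially your Lax--Milgram step), then a \emph{bootstrap} on the Neumann $L^p$ regularity theorem of Simader --- the trace of $u_\beta\in W^{1,2}(\O)$ lies in $L^{2^*_{\partial\O}}(\partial\O)$, so the flux $-\beta u_\beta$ is admissible Neumann data, giving $u_\beta\in W^{1,q_1}(\O)$, hence a better trace, and so on along an explicit sequence of exponents $q_k\to\infty$ --- and only \emph{afterwards} deduces boundedness (indeed $\mathscr C^{0,\alpha}$ bounds) from Sobolev/Morrey embeddings; the uniform sup and inf over $\mathcal B(\partial\O)$ are then obtained by exactly the weak-$*$ compactness argument you use at the end. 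You reverse the order: you first get the uniform $L^\infty$ bound by Stampacchia truncation (using that the boundary term $\int_{\partial\O}\beta u_\beta(u_\beta-k)_+$ has a sign and that superlevel sets are small by Chebyshev), and then a \emph{single} application of the Neumann regularity estimate with flux $-\beta u_\beta\in L^\infty(\partial\O)\hookrightarrow W^{-1/q,q}(\partial\O)$ yields all the uniform $W^{1,p}$ bounds at once --- so the step you flag as the ``main technical obstacle'' is actually easier on your route than on the paper's, since no iteration on exponents is needed. What your route buys is a self-contained, variational $L^\infty$ bound and no bootstrap bookkeeping; what the paper's route buys is that it never needs truncation machinery and works directly from $W^{1,2}$ with the linear regularity theorem alone. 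Two minor points: your bound $\Vert\beta u_\beta\Vert_{L^\infty(\partial\O)}\leq M$ implicitly uses $u_\beta\geq 0$ (Stampacchia as you set it up only gives the upper bound), so the nonnegativity argument from your final paragraph should be invoked before the regularity step --- it is independent of it, so this is only a reordering; and your pointwise use of Hopf's lemma against the Robin condition at a boundary minimum is at the same, slightly formal, level of rigour as the paper itself, which asserts the per-$\beta$ positivity via the strong maximum principle and argues identically in the proof of Theorem \ref{Th:RelaxBound}, even though for $\beta\in L^\infty(\partial\O)$ the normal derivative need not exist pointwise (a fully rigorous alternative is to compare $u_\beta$ from below with the solution corresponding to $\beta\equiv 1$).
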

This lemma is a standard consequence of Robin regularity results. 

We finally state our existence result.
\begin{lemma}\label{Le:Exist}
Let us assume that $\partial\O$ is $\mathscr{C}^2$. Each of the problems \eqref{Eq:MaxBound}-\eqref{Eq:MinBound}-\eqref{Eq:MaxDist}-\eqref{Eq:MinDist} has a solution.
\end{lemma}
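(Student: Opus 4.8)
The plan is to apply the direct method in the calculus of variations, exploiting the weak-$*$ compactness of the admissible set together with the continuity of the functionals for this topology. First I would record that $\mathcal B(\partial \O)$ is sequentially compact for the weak-$*$ topology of $L^\infty(\partial \O)$: it is bounded (by the constraint $0\le \beta\le 1$) and convex, and the affine constraint $\int_{\partial \O}\beta=V_0$ passes to weak-$*$ limits, so $\mathcal B(\partial \O)$ is a weak-$*$ closed subset of a ball of $L^\infty(\partial \O)=(L^1(\partial \O))'$. Since $L^1(\partial \O)$ is separable, the weak-$*$ topology is metrisable on bounded sets, and Banach--Alaoglu yields sequential compactness. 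This is the fact already alluded to after \eqref{Eq:AdmRobin}.

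The heart of the argument is the continuity of the map $\beta\mapsto u_\beta$ in the following sense: if $\beta_n\rightharpoonup\beta$ weak-$*$ in $L^\infty(\partial\O)$, then $u_{\beta_n}\to u_\beta$ strongly in $L^2(\O)$ and, crucially, in $L^2(\partial\O)$. To establish this I would start from the uniform bounds of Lemma~\ref{Le:RegRobin}, which give $\sup_n\|u_{\beta_n}\|_{W^{1,2}(\O)}<\infty$; up to a subsequence $u_{\beta_n}\rightharpoonup u^*$ weakly in $W^{1,2}(\O)$, and by the compact embedding of $W^{1,2}(\O)$ into $L^2(\O)$ and into $L^2(\partial\O)$ the convergence is strong in both spaces. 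I would then pass to the limit in the weak formulation
\[
\int_\O \n u_{\beta_n}\cdot \n\p + \int_{\partial\O}\beta_n u_{\beta_n}\p = \int_\O f\p, \qquad \p\in W^{1,2}(\O).
\]

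The delicate term is the boundary integral $\int_{\partial\O}\beta_n u_{\beta_n}\p$, and this is the main obstacle of the proof. Here I would use that $\beta_n\rightharpoonup\beta$ weak-$*$ while $u_{\beta_n}\p\to u^*\p$ strongly in $L^1(\partial\O)$ (the strong $L^2(\partial\O)$-convergence of the traces, obtained from the compactness of the trace operator, is exactly what makes this product converge against the weak-$*$ limit), so that $\int_{\partial\O}\beta_n u_{\beta_n}\p\to \int_{\partial\O}\beta u^*\p$. Consequently $u^*$ solves the weak formulation associated with $\beta$, and the uniqueness part of Lemma~\ref{Le:RegRobin} forces $u^*=u_\beta$; since the limit is independent of the subsequence, the whole sequence converges.

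Finally I would close the argument. By the uniform estimate $0<\inf_\beta\inf_\O u_\beta\le\sup_\beta\sup_\O u_\beta<\infty$ of Lemma~\ref{Le:RegRobin}, the values $u_{\beta_n}$ and $u_\beta$ all lie in a fixed compact interval on which the $\mathscr C^2$ function $j$ is Lipschitz; hence the strong $L^2$-convergences upgrade to $j(u_{\beta_n})\to j(u_\beta)$ in $L^1(\O)$ and in $L^1(\partial\O)$, so that both $\mathcal J_\O$ and $\mathcal J_{\partial\O}$ are continuous for the weak-$*$ topology. Taking a maximising (respectively minimising) sequence, extracting a weak-$*$ convergent subsequence with limit $\beta^*\in\mathcal B(\partial\O)$, and invoking continuity then shows that $\beta^*$ realises the supremum (respectively infimum). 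This proves existence for each of \eqref{Eq:MaxBound}, \eqref{Eq:MinBound}, \eqref{Eq:MaxDist} and \eqref{Eq:MinDist}.
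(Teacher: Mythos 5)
Your proposal is correct and follows essentially the same route as the paper: the direct method, weak-$*$ sequential compactness of $\mathcal B(\partial \O)$, uniform $W^{1,2}$ bounds plus Rellich--Kondrachov and trace compactness to pass to the limit in the weak formulation (the product of the weak-$*$ convergent $\beta_n$ against the strongly convergent traces being the key point), and identification of the limit by uniqueness. The only cosmetic differences are that the paper derives the uniform bound from the energy identity via Lemma~\ref{Le:Est} and invokes dominated convergence where you use the Lipschitz bound on $j$ over the uniform range of $u_\beta$; both are equivalent here.
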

\section{Proofs of Theorems \ref{Th:BgbgBound} and \ref{Th:RelaxBound}}
Throughout this section, $f$ and $j$ are assumed to satisfy \eqref{Eq:Hypf} and \eqref{Eq:Hypj} respectively. We work only on boundary criteria;  thus, to alleviate notation, we drop the subscript in $\mathcal J_{\partial \O}$ and simply write 
$$\mathcal J(\beta):=\int_{\partial \O} j(u_\beta).$$ We begin by computing the derivatives of $\mathcal J$, as these derivatives are key in proving Theorems \ref{Th:BgbgBound} and \ref{Th:RelaxBound}.

\subsection{Computation of the derivatives of $\mathcal J_{\partial \O}$}\label{Se:OptBound}
The differentiability of the map $\mathcal B(\partial \O)\ni \beta\mapsto u_\beta\in W^{1,2}(\O)$ is standard. Let us consider $\beta\in \mathcal B(\partial \O)$ and an admissible perturbation\footnote{\label{footnotehAdm}The wording ``admissible perturbation'' means that $h$ belongs to the tangent cone to the set $\mathcal B(\partial \O)$ at $\beta$.
It corresponds to the set of functions $h\in L^\infty(\O)$ such that, for any sequence of positive real numbers $\varepsilon_n$ decreasing to $0$, there exists a sequence of functions $h_n\in L^\infty(\O)$ converging in $L^2(\O)$ to $h$ as $n\rightarrow +\infty$, and $\beta+\varepsilon_nh_n\in\mathcal B(\partial \O)$ for every $n\in\N$.} $h$ at $\beta$ 
We denote with a single (resp. double) dot the first (resp. second) order Gateaux derivative of relevant quantities at $\beta$ in the direction $h$. By differentiating \eqref{Eq:MainRobin}, we see that $\dot u_\beta$ solves
\begin{equation}\label{Eq:DotuBound}\begin{cases}
-\Delta \dot u_\beta=0&\text{ in }\O, 
\\ \frac{\partial \dot u_\beta}{\partial \nu}+\beta \dot u_\beta=-hu_\beta &\text{ on }\partial \O,
\end{cases}
\end{equation}
while $\ddot u_\beta$ satisfies
\begin{equation}\label{Eq:DdotuBound}\begin{cases}
-\Delta \ddot u_\beta=0&\text{ in }\O, 
\\ \frac{\partial \ddot u_\beta}{\partial \nu}+\beta \ddot u_\beta=-2h\dot u_\beta &\text{ on }\partial \O.
\end{cases}
\end{equation}
Existence and uniqueness of $W^{1,2}(\O)$ solutions to these equations are immediate.
The derivatives of the criterion $\mathcal J$ are, similarly, given by 
\begin{equation}\label{Eq:SV}\dot{\mathcal J}(\beta)[h]=\int_\O \dot u_\beta j'(u_\beta)\text{ and } \ddot{\mathcal J}(\beta)[h,h]=\int_{\partial \O}\ddot u_\beta j'(u_\beta)+\int_{\partial \O}\left(\dot u_\beta\right)^2 j''(u_\beta).
\end{equation}
In order to make these derivatives more tractable, we introduce the adjoint state $p_\beta$ as the unique  solution in $W^{1,2}(\O)$ of 
\begin{equation}\label{Eq:Adjoint}
\begin{cases}
-\Delta p_\beta=0&\text{ in }\O, 
\\ \frac{\partial p_\beta}{\partial \nu}+\beta p_\beta=j'(u_\beta)&\text{ on }\partial \O.\end{cases}
\end{equation}
Since $\inf_\O u_\beta>0$ and since $j'>0$ on $\R_+^*$, it follows from the maximum principle that $p_\beta$ is positive in $\overline \O$ and that we even have
\begin{equation}\label{Eq:CG}\inf_{\overline \O}p_\beta>0.\end{equation}
Multiplying \eqref{Eq:Adjoint} by $\dot u_\beta$ and \eqref{Eq:DotuBound} by $p_\beta$, an integration by parts yields 
\begin{align*}
0&=\int_\O \n \dot u_\beta\cdot \n p_\beta+\int_{\partial \O}\beta p_\beta \dot u_\beta-\int_{\partial \O}j'(u_\beta)\dot u_\beta
\\0&=\int_\O \n \dot u_\beta\cdot \n p_\beta+\int_{\partial \O}\beta p_\beta \dot u_\beta+\int_{\partial \O} hu_\beta p_\beta 
\end{align*}
Hence, we obtain 
$$
\int_{\partial \O} j'(u_\beta)\dot u_\beta=-\int_{\partial \O}hu_\beta p_\beta,
$$
by combining the two identities above.
In particular, according to \eqref{Eq:SV}, we get 
\begin{equation}\label{Eq:OptO1}
\dot{\mathcal J}(\beta)[h]=-\int_{\partial \O}h u_\beta p_\beta.
\end{equation}

 Multiplying \eqref{Eq:Adjoint} by $\ddot u_\beta$, and \eqref{Eq:DdotuBound} by $p_\beta$, and integrating by parts also gives the following expression for the second-order Gateaux derivative of $\mathcal J$:
 \begin{equation}\label{Eq:Opt2}\ddot{\mathcal J}(\beta)[h,h]=-2\int_{\partial \O}h\dot u_\beta p_\beta+\int_{\partial \O}j''(u_\beta)\left(\dot u_\beta\right)^2.
\end{equation}

\subsection{Proof of Theorem \ref{Th:BgbgBound}}\label{sec:proofTh:BgbgBound}
We shall argue by contradiction: let us fix a maximiser $\beta^*\in \mathcal B(\partial \O)$ (which exists thanks to Lemma \ref{Le:Exist}) such that the set \begin{equation}\label{Eq:Anormal}\omega^*:=\{0<\beta^*<1\}\end{equation} has positive measure:
$$\H(\omega^*)>0.$$ It follows that for any admissible perturbation $h$ at $\b$ supported in $\omega^*$ (in the sense of Footnote~\ref{footnotehAdm}), there holds 
\begin{equation}\dot{\mathcal J}(\b)[h]=0.\end{equation} To reach a contradiction, it suffices to prove that there exists an admissible perturbation $h$ supported in $\omega^*$ such that 
\begin{equation}\ddot{\mathcal J}(\b)[h,h]>0.\end{equation} We start by recalling that from \eqref{Eq:Opt2}, we have
 \begin{equation}\ddot{\mathcal J}(\b)[h,h]=-2\int_{\partial \O}h\dot u_\b p_\b+\int_{\partial \O}j''(u_\b)\left(\dot u_\b\right)^2.
\end{equation} 

The first step of this proof is to obtain an expression of $\ddot{\mathcal J}$ that is reminiscent of a Rayleigh quotient. This is the purpose of the following proposition:
\begin{proposition}\label{Pr:Rayleigh} There exist three constants $A\,, B\,, C$ with $A>0$ such that 
\begin{equation}\label{Eq:Vivaldi}
\ddot{\mathcal J} (\b)[h,h]\geq A\int_\O  |\n \dot u_\b|^2-B\Vert  \dot u_\b\Vert_{W^{1,2}(\O)}\Vert \dot u_\b\Vert_{L^2(\O)}  -C\int_{\partial \O}\dot u_\b^2.
\end{equation}
\end{proposition}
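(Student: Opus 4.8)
The plan is to massage the expression \eqref{Eq:Opt2} for $\ddot{\mathcal J}(\b)[h,h]$ into a Rayleigh-type form dominated by $\int_\O|\n\dot u_\b|^2$. Write $w:=\dot u_\b$ and $q:=p_\b/u_\b$; by Lemma~\ref{Le:RegRobin}, \eqref{Eq:CG} and the assumption $j'>0$ (this is precisely where monotonicity enters), both $u_\b$ and $p_\b$ are bounded above and below by positive constants, so $q\geq q_0>0$ on $\overline\O$. The term $\int_{\partial\O}j''(u_\b)w^2$ is harmless: since $j''$ is bounded on the range of $u_\b$, it is $\geq -\|j''\|_\infty\int_{\partial\O}w^2$, and thus contributes to the final $-C\int_{\partial\O}\dot u_\b^2$. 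Everything therefore reduces to understanding the term $-2\int_{\partial\O}hw p_\b$.

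For that term, first I would use the boundary condition in \eqref{Eq:DotuBound}, namely $hu_\b=-(\partial_\nu w+\b w)$, to eliminate $h$, and then use the weak formulation of \eqref{Eq:DotuBound} (recall that $w$ is harmonic) with the test function $wp_\b/u_\b\in W^{1,2}(\O)$. This turns the boundary integral into
\begin{equation*}
-2\int_{\partial\O}hw p_\b=2\int_\O q|\n w|^2+2\int_\O w\,\n w\cdot\n q+2\int_{\partial\O}q\b w^2.
\end{equation*}
The first term is bounded below by $2q_0\int_\O|\n w|^2$, which furnishes the leading coefficient $A=2q_0>0$; the boundary term $2\int_{\partial\O}q\b w^2$ is nonnegative and may simply be discarded. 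It then remains to control the cross term $2\int_\O w\,\n w\cdot\n q$.

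This cross term is the main obstacle. Since $\b$ is only $L^\infty$, elliptic regularity yields $u_\b,p_\b\in W^{1,p}(\O)$ for every finite $p$ but \emph{not} $\mathscr C^1$ up to the boundary, so $\n q$ is not in $L^\infty$ and the naive bound $\int|w|\,|\n w|\leq\|w\|_{L^2}\|\n w\|_{L^2}$ is unavailable. The device I would use is to write $w\,\n w=\tfrac12\n(w^2)$, expand $\n q=\frac{u_\b\n p_\b-p_\b\n u_\b}{u_\b^2}$, and integrate by parts once more in each resulting piece, transferring the gradient off $w^2$. The key point is that after this second integration by parts only the Laplacians $\Delta p_\b=0$ and $\Delta u_\b=-f$ (both bounded) and \emph{first-order} gradients of $u_\b,p_\b$ ever appear, never second derivatives. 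Concretely one is left with volume integrands that are finite products of the bounded functions $1/u_\b$, $p_\b$, $f$ with $\n u_\b\cdot\n p_\b$ and $|\n u_\b|^2$, each lying in $L^s(\O)$ for all finite $s$, together with boundary integrands controlled through $\partial_\nu u_\b=-\b u_\b$ and $\partial_\nu p_\b=j'(u_\b)-\b p_\b\in L^\infty(\partial\O)$ (from \eqref{Eq:Adjoint}).

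Finally I would estimate these remainders. Each volume term is bounded by $\|\,\cdot\,\|_{L^s(\O)}\|w\|_{L^{2s'}(\O)}^2$ via Hölder; choosing $s$ large forces $2s'$ close to $2$, and the Gagliardo--Nirenberg interpolation $\|w\|_{L^{2s'}}\leq C\|w\|_{L^2}^{1-\theta}\|w\|_{W^{1,2}}^{\theta}$ with $\theta\leq\tfrac12$ then gives $\|w\|_{L^{2s'}}^2\leq C\|w\|_{L^2}\|w\|_{W^{1,2}}$, which is exactly the structure of the $-B\|\dot u_\b\|_{W^{1,2}}\|\dot u_\b\|_{L^2}$ term. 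The boundary remainders are bounded by $C\int_{\partial\O}w^2$ and, together with the $j''$ contribution, absorbed into $-C\int_{\partial\O}\dot u_\b^2$. Collecting all the pieces yields \eqref{Eq:Vivaldi}.
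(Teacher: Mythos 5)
Your proposal is correct and is essentially the paper's own argument: the same auxiliary function $\Psi_{\beta^*}=p_{\beta^*}/u_{\beta^*}$ (your $q$), whose positivity comes from $j'>0$, the same elimination of $h$ through the boundary condition of \eqref{Eq:DotuBound} combined with the harmonicity of $\dot u_{\beta^*}$ to produce the leading term $2\int_\O \Psi_{\beta^*}|\nabla \dot u_{\beta^*}|^2$, and the same control of the remainder by showing it only involves $f$, $1/u_{\beta^*}$ and first-order gradients of $u_{\beta^*},p_{\beta^*}$, hence lies in $L^p(\O)$ for every finite $p$. The only cosmetic differences are that the paper obtains the key identity via the Green formula \eqref{Eq:B} applied to $z=\dot u_{\beta^*}^2$ rather than by testing the weak formulation of \eqref{Eq:DotuBound} against $\dot u_{\beta^*}\Psi_{\beta^*}$ and integrating by parts a second time, and that it closes the estimate with a three-exponent H\"older inequality together with the embedding $W^{1,2}(\O)\hookrightarrow L^r(\O)$, $r<2^*$, instead of your Gagliardo--Nirenberg interpolation.
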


\begin{proof}[Proof of Proposition \ref{Pr:Rayleigh}]
 We isolate the first part of \eqref{Eq:Opt2}, and define 
\begin{equation}{\mathcal W}(\b)[h,h]:=-2\int_{\partial \O} h\dot u_\b p_\b.\end{equation} Using the boundary condition of the equation \eqref{Eq:DotuBound} on $\dot u_\b$ this quantity rewrites
$${\mathcal W}(\b)[h,h]=2\int_{\partial \O}\left(\partial_\nu \dot u_\b+\b \dot u_\b\right)\frac{ p_\b}{u_\b}\dot u_\b.$$ Let us introduce the function $\Psi_\b$ given by
$$\Psi_\b:=\frac{p_\b}{u_\b},$$  so that
\begin{equation}\mathcal W(\b)[h,h]=\int_{\partial \O}\Psi_\b  \left(\partial_\nu\left({\dot u_\b}^2\right)+2\b \dot u_\b^2\right).\end{equation} Since $\inf_{\overline \O}u_\b, p_\b>0$ from \eqref{Eq:CG} and lemma \ref{Le:RegRobin}, and since  $\sup_{\overline \O}u_\b, p_\b<\infty$, one has
$$
\inf_{\overline \O}\Psi_\b>0.
$$
Furthermore, for every function $z\in W^{2,2}(\O)$, one has
\begin{equation}\label{Eq:B}-\int_\O \left(\Delta \Psi_\b\right) z+\int_\O \left(\Delta z\right)\Psi_\b=\int_{\partial\O}\left( \partial_\nu z\right)\Psi_\b -\int_{\partial\O}z\left(\partial_\nu \Psi_\b\right) .\end{equation}
We first compute
\begin{equation}\frac{\partial \Psi_\b}{\partial \nu}=\frac{\partial_\nu p_\b}{u_\b}-\Psi_\b \frac{\partial_\nu u_\b}{u_\b}=-\b \Psi_\b+\b\Psi_\b+\frac{j'(u_\b)}{u_\b}=\frac{j'(u_\b)}{u_\b}>0.\end{equation} 
We want to take $z=\dot u_\b^2$ in \eqref{Eq:B}. We have 
$$
\Delta z=2|\n \dot u_\b|^2+2\dot u_\b \Delta\dot u_\b =2|\n \dot u_\b|^2\text{ since $\Delta \dot u_\b=0$}.
$$ 
Hence, 
\begin{equation}
\int_{\partial \O}\Psi_\b \partial_\nu \left({\dot u_\b^2}\right)=2\int_\O |\n \dot u_\b|^2\Psi_\b-\int_\O {\dot u_\b^2}{}\Delta \Psi_\b+\int_{\partial \O}\frac{j'(u_\b)}{u_\b}\dot u_\b^2
\end{equation}
Thus, the second order derivative of ${\mathcal J}$ rewrites
\begin{equation}\ddot{\mathcal J} (\b)[h,h]=2\int_\O \Psi_\b |\n \dot u_\b|^2-\int_\O {\dot u_\b^2}\Delta \Psi_\b+\int_{\partial \O}\left(2\b\Psi_\b+\frac{j'(u_\b)}{u_\b}+j''(u)\right) \dot u_\b^2.
\end{equation}
Since $\b$ and $u_\b$ belong to $L^\infty(\partial \O)$ and since $j\in \mathscr C^2$, there exists a constant $C$ independent of $h$ such that, for any admissible perturbation $h$ 
\begin{equation}
\int_{\partial \O}\left(2\b\Psi_\b+\frac{j'(u_\b)}{u_\b}+j''(u)\right)\left(\dot u_\b\right)^2\geq -C \int_{\partial \O } \dot u_\b^2.
\end{equation}
Overall, we thus have the following estimate on $\ddot{\mathcal J}$:
\begin{equation}\label{Eq:Ete90}
\ddot{\mathcal J}(\b)[h,h]\geq 2\int_\O \Psi_\b |\n \dot u_\b|^2-\int_\O {\dot u_\b^2}\Delta \Psi_\b-C\int_{\partial \O } \dot u_\b^2.
\end{equation}
Since $\inf_\O \Psi_\b>0$, there exists $A>0$ such that
\begin{equation}\ddot{\mathcal J} (\b)[h,h]\geq A\int_\O  |\n \dot u_\b|^2-\int_\O {\dot u_\b^2}\Delta \Psi_\b-C\int_{\partial \O}\dot u_\b^2.
\end{equation}
Direct computations yield
\begin{eqnarray*}
\Delta \Psi_\b&=&\frac{\Delta p_\b}{u_\b}-\Psi_\b \frac{\Delta u_\b}{u_\b}-2\frac{\n u_\b\cdot \n p_\b}{u_\b^2}+2\Psi_\b \frac{|\n u_\b|^2}{u_\b^2}\\
&=& \frac{f\Psi_\b}{u_\b} -2\frac{\n u_\b\cdot  \n p_\b}{u_\b^2}+2\Psi_\b \frac{|\n u_\b|^2}{u_\b^2}.
\end{eqnarray*}
Besides, since $u_\b$ and $p_\b$ belong to $W^{1,p}(\O)$ for any $p\in[ 1;+\infty)$ and since $\inf_{\overline \O}u_\b>0$, it follows that, for any $p\geq 1$,
$$
-2\frac{\n u_\b\cdot \n p_\b}{u_\b^2}+2\Psi_\b \frac{|\n u_\b|^2}{u_\b^2}\in L^p(\O).$$ 
Therefore, $\Delta \Psi_\b$ belongs to $L^p(\O)$ for any $p\geq 1$. Now, let us apply  H\"{o}lder's inequality with three exponents: let $(q,r)\in (1;+\infty)^2$ be such that 
$$
\frac12+\frac1q+\frac1r=1\text{ and } r\in (2;2^*)\quad \text{with }2^*:=\frac{2n}{n-2}.$$ 
By H\"{o}lder's inequality, we obtain 
\begin{equation}\left|\int_\O {\dot u_\b^2}\Delta \Psi_\b\right|\leq \Vert \Delta \Psi_\b\Vert_{L^q(\O)}\Vert \dot u_\b\Vert_{L^r(\O)}\Vert \dot u_\b\Vert_{L^2(\O)}\end{equation} and so, from the Sobolev embedding $W^{1,2}(\O)\hookrightarrow L^r(\O)$, there exists a constant $B>0$ such that

\begin{equation}\left|\int_\O {\dot u_\b^2}\Delta \Psi_\b\right|\leq B\Vert \dot u_\b\Vert_{W^{1,2}(\O)}\Vert \dot u_\b\Vert_{L^2(\O)}.\end{equation}
We thus infer that
\begin{equation}\ddot{\mathcal J} (\b)[h,h]\geq A\int_\O  |\n \dot u_\b|^2-B\Vert  \dot u_\b\Vert_{W^{1,2}(\O)}\Vert \dot u_\b\Vert_{L^2(\O)}  -C\int_{\partial \O}\dot u_\b^2.
\end{equation}
As a consequence, \eqref{Eq:Vivaldi} is proved. 

This concludes the proof of the Proposition.
\end{proof}

We point that since the standard $W^{1,2}(\O)$ norm is equivalent to $W^{1,2}(\O)\ni u\mapsto \left(\int_\O |\n u|^2+\int_{\partial \O} u^2\right)^{1/2}$ (we refer to Lemma \ref{Le:Est} for instance), in what follows, we use 
$$\Vert u\Vert_{W^{1,2}(\O)}^2=\int_\O |\n u|^2+\int_{\partial \O} u^2.$$ Up to multiplying $B$ by a positive constant, \eqref{Eq:Vivaldi} remains unchanged and we thus keep the notation $B$. We turn back to the proof of the Theorem.

The key point is to construct an admissible perturbation $h$, supported in $\omega^*$ (introduced in \eqref{Eq:Anormal}) such that $\ddot{\mathcal J}(\b)[h,h]>0$.
To this aim, we use \eqref{Eq:Vivaldi} to say that it suffices to construct, for $\e>0$ and $\delta>0$ small enough, an admissible perturbation $h$ supported in $\omega^*$ such that $\Vert h\Vert_{L^2(\partial\O)}=1$ and that satisfies
\begin{equation}\label{Eq:C1}
\Vert \dot u_\b\Vert_{L^2(\O)} \leq \e \Vert \dot u_\b\Vert_{W^{1,2}(\O)}
\end{equation}
and
\begin{equation}\label{Eq:C2}
\Vert \dot u_\b\Vert_{L^2(\partial \O)} \leq \sqrt\delta \Vert \n\dot u_\b\Vert_{L^2(\O)}
\end{equation}
Indeed, if $h$ is non-zero and satisfies \eqref{Eq:C1}-\eqref{Eq:C2} then, according to \eqref{Eq:Vivaldi} we obtain
\begin{align*}
\ddot{\mathcal J}(\b)[h,h]&\geq A\Vert \n\dot u_\b\Vert_{L^2(\O)}^2-B\Vert \n\dot u_\b\Vert_{W^{1,2}(\O)}\Vert \dot u_\b\Vert_{L^2(\O)}-C\Vert \dot u_\b\Vert_{L^2(\partial\O)}^2
\\&\geq (A-C\delta)\Vert \n\dot u_\b\Vert_{L^2(\O)}^2-B\e\Vert \dot u_\b\Vert_{W^{1,2}(\O)}^2
\\&\geq(A-C\delta)\Vert \n\dot u_\b\Vert_{L^2(\O)}^2- B\e\left(\Vert\n \dot u_\b\Vert_{L^{2}(\O)}^2+\Vert \dot u_\b\Vert_{L^{2}(\partial\O)}^2\right)
\\&\geq(A-C\delta- B\e(1+\delta))\Vert \n\dot u_\b\Vert_{L^2(\O)}^2.
\end{align*}
In particular, to ensure that we have a positive right-hand side it suffices to pick $\e$ and $\delta$ small enough.

It thus remains to prove that such a perturbation $h$ exists. Let us highlight that we will obtain \eqref{Eq:C1} and \eqref{Eq:C2} by two different paths. We start with \eqref{Eq:C2}.

\paragraph{Regarding condition \eqref{Eq:C2}.} 
Let us fix an arbitrary $\delta>0$. To prove that we can choose $h$ supported in $\omega^*$ such that $\Vert h\Vert_{L^2(\partial\O)}=1$ and that satisfies \eqref{Eq:C2} we rely on eigenvalues and eigenfunctions of a Robin-Steklov type operator. More precisely, we introduce the Hilbert basis $\{\phi_k\}_{k\in \N}$ of $L^2(\partial\O)$ given by 
\begin{equation}\label{def:phik}
\begin{cases}
-\Delta \phi_k=0&\text{ in }\O, 
\\\frac{\partial \phi_k}{\partial \nu}+\beta_k\phi_k=\sigma_k\phi_k&\text{ on }\partial \O, 
\end{cases}\quad\text{ where }0<\sigma_0\leq \sigma_1\leq \dots\leq\sigma_k\underset{k\to \infty}\to \infty  \text{ and }\quad \int_{\partial \O}\phi_k\phi_{k'}=\delta_{k,k'}.
\end{equation}
 We prove in Appendix~\ref{sec:appendSteklov} how such eigenelements are defined; our arguments follow the classical \cite[Section 11]{MR2072072}. 

To see how this elements enable us to obtain \eqref{Eq:C2}, let us first observe that for any admissible $h$, the function $-hu_\b$ belongs to $L^2(\partial \O)$, and thus expands as
\begin{equation}-hu_\b=\sum_{k=0}^\infty \alpha_k(h)\phi_k\end{equation} where 
\begin{equation}
\forall k\in \N\,, 
\alpha_k(h)=\int_{\partial \O}(-hu_\b)\phi_k
\end{equation} 
Then, since $\dot u_\b$ solves
$$
\begin{cases}-\Delta u_\b=0&\text{ in }\O, \\
\frac{\partial u_\b}{\partial \nu}+\b u_\b=\sum_{k=0}^\infty \alpha_k(h)\phi_k&\text{ on }\partial\O,
\end{cases}
$$ 
we have 
\begin{equation} \dot u_\b=\sum_{k=0}^\infty\frac{\alpha_k(h)}{\sigma_k}\phi_k,\end{equation} which then allows us to compute
\begin{equation}
\Vert \dot u_\b\Vert_{L^2(\partial \O)}^2=\sum_{k=0}^\infty\frac{\alpha_k(h)^2}{\sigma_k^2}\quad \text{and} \quad 
\Vert \n \dot u_\beta\Vert_{L^2(\O)}^2+\int_{\partial \O}\b \dot u_\b^2=\sum_{k=0}^\infty \frac{\alpha_k(h)^2}{\sigma_k}.
\end{equation}
Therefore, since $0\leq \beta^*\leq 1$, one has
$$
\Vert \n \dot u_\beta\Vert_{L^2(\O)}^2\geq \sum_{k=0}^\infty \frac{\alpha_k(h)^2}{\sigma_k}-\int_{\partial \O} \dot u_\b^2\geq \sum_{k=0}^\infty \alpha_k(h)^2\left(\frac1{\sigma_k}-\frac1{\sigma_k^2}\right).
$$
Observe that if $K\in \N^*$ is chosen in such a way that
\begin{equation}\label{Eq:FL}
\forall k\in \llbracket 0, K-1\rrbracket,\quad \alpha_k(h)=0,
\end{equation} 
the previous estimates imply
\begin{equation}\Vert \n \dot u_\beta\Vert_{L^2(\O)}^2\geq \sum_{k=K}^\infty \alpha_k(h)^2\left(\frac1{\sigma_k}-\frac1{\sigma_k^2}\right)=\sum_{k=K}^\infty\frac{\alpha_k(h)^2}{\sigma_k^2}(\sigma_k-1)\geq \left(\sigma_K-1\right)\Vert \dot u_\b\Vert_{L^2(\partial\O)}^2.\end{equation}
Thus if we fix $K\in \N^*$ such that 
\begin{equation}\label{Eq:Patin} \frac{1}{{\sigma_K-1}}\leq {\delta}\end{equation} and if we pick $h$ such that \eqref{Eq:FL} holds, we reach condition \eqref{Eq:C2}. We now prove that for any $K\in \N$, there exists an admissible perturbation $h\neq 0$ supported in $\omega^*$ such that \eqref{Eq:FL} is satisfied.

Let $K\in \N^*$ such that \eqref{Eq:Patin} is satisfied be fixed. According to the discussion above, we want to prove that there exists $h\in L^2(\partial \O)$ supported in $\omega^*$ such that 
\begin{enumerate}
\item$ \Vert h\Vert_{L^2(\partial \O)}^2=1,$
\item $\int_{\partial \O}h=0$,
\item $\forall k\in\llbracket 0,K-1\rrbracket$, $\alpha_k(h)=0$.
\end{enumerate}
 $\H(\omega^*)>0$ so $L^2(\omega^*)$ is infinite dimensional. We introduce the following family of $(K+1)$ linear forms on $L^2(\omega^*)$: \begin{eqnarray*}
 R:L^2(\omega^*)\ni h&\mapsto& \int_{\omega^*}h\\
T_k:L^2(\omega^*)\ni h&\mapsto &\int_{\omega^*} hu_\b \phi_k
\end{eqnarray*}
for all $k\in \llbracket 0,K-1\rrbracket$. Since $u_\b$ belongs to $L^\infty(\partial\O)$, each $T_k$ defines a continuous linear form and $R$ is itself obviously continuous. As a consequence, the subspace
 \begin{equation}\label{Eq:DefE}
 E_\delta:=\ker(R)\bigcap\left(\underset{k=0}{\overset{K}\bigcap}\ker(T_k)\right)
 \end{equation}
 has finite co-dimension. Hence, there exists, in particular, $h\in E_\delta$ such that $\Vert h\Vert_{L^2(\omega^*)}=1$. It suffices to extend $h$ by 0 to $\O$  to obtain an admissible perturbation that satisfies all the required conditions.

\paragraph{Satisfying both conditions \eqref{Eq:C1} and \eqref{Eq:C2}.} 
Thus, for a fixed $\delta>0$ (not necessarily small), every $h\in E_\delta$ satisfies \eqref{Eq:C2}, where $E_\delta$ is defined in \eqref{Eq:DefE}. We have also fixed $\e>0$. Let us show that there exists $h\in E_\delta$ that satisfies \eqref{Eq:C1}, which suffices to conclude the proof. 
In other words, we will  prove that
\begin{equation}\label{Eq:G} 
\forall C>0, \quad \exists h \in E_\delta, \quad \Vert \dot u_{\b}\Vert_{W^{1,2}(\O)}> C\Vert \dot u_{\b}\Vert_{L^2(\O)}.
\end{equation} 

To prove \eqref{Eq:G}, let us argue by contradiction, assuming the existence of $C>0$ such that
\begin{equation}\label{Eq:G1}
\forall h \in E_\delta, \quad \Vert \dot u_{\b}\Vert_{W^{1,2}(\O)}\leq C\Vert \dot u_{\b}\Vert_{L^2(\O)}.
\end{equation}
In what follows, we will rather denote $\dot u_\b$ by $\dot u_\b[h]$ to emphasize the dependency of this function in $h$.
Let us introduce $$X_\delta:=\left\{\dot u_{\b}[h], h\in E_\delta \right\}.$$ $X_\delta$ is a subspace of $W^{1,2}(\O)$ and since the map $E_\delta\ni h\mapsto \dot u_{\b}[h]$ is an injection, $X_\delta$ is infinite dimensional. Consequently, there exists an $L^2(\O)$-orthonormal family $\{v_k\}_{k\in \N}$ in $X_\delta$. In particular, for any $k\in \N$, $\Vert v_k\Vert_{L^2(\O)}=1$. Furthermore, according to the Parseval inequality, one has 
$$v_k\underset{k\to \infty}\rightharpoonup 0\text{ weakly in $L^2(\O)$}.$$ However, should \eqref{Eq:G1} hold, the family $\{v_k\}_{k\in \N}$ would be uniformly bounded in $W^{1,2}(\O)$ and thus, by the Rellich-Kondrachov theorem, converge strongly in $L^2$ to a closure point $v_\infty$ (up to a subsequence). Since we already know it converges weakly to 0, one must have $v_\infty=0$ on the one hand and $\Vert v_\infty\Vert_{L^2(\O)}=1$ on the other hand, leading to a contradiction. The conclusion follows: there necessarily exists $h$ such that \eqref{Eq:C1}-\eqref{Eq:C2} holds. 

The proof of the Theorem is now complete.

\subsection{Proof of Theorem \ref{Th:RelaxBound}}
We prove each point of Theorem \ref{Th:RelaxBound} separately. Once again, notational convenience leads us tu dropping the $\partial \O$ subscript, and to just writing
$$\mathcal J(\beta)=\int_{\partial \O}j(u_\beta).$$
\paragraph{Proof of $\boldsymbol{(i)}$.}
Before we get to the core of the Theorem, let us point out a consequence of the expression of the first order derivative of the criterion given in \eqref{Eq:OptO1}. We set, for any $\beta\in \mathcal B(\partial \O)$, 
$$\Phi_\beta:=u_\beta p_\beta$$ where $p_\beta$ is the solution of \eqref{Eq:Adjoint}. Therefore, for any admissible perturbation $h$ at a given $\beta$, there holds
\begin{equation}\dot{\mathcal J}(\beta)[h]=-\int_{\partial \O} h \Phi_\beta.\end{equation} 
Let $\beta^*$ be  a solution of \eqref{Eq:MinBound}. Since we must have 
$$-\dot{\mathcal J}(\beta^*)[h]=\int_{\partial \O}\Phi_{\beta^*}h\leq 0$$ for any admissible perturbation $h$ at $\beta^*$, there exists a real number $\lambda$ (necessarily positive as $\Phi_\beta>0$) such that 
\begin{enumerate}
\item $\{0<\beta^*<1\}\subset\left\{\Phi_{\beta^*}=\lambda\right\}$,
\item $\{\beta^*=1\}\subset \{\Phi_{\beta^*}\geq \lambda\}$,
\item $\{\beta^*=0\}\subset \{\Phi_{\beta^*}\leq \lambda\}$.
\end{enumerate}
We shall now prove that for any solution $\beta^*$ of \eqref{Eq:MinBound}, such optimality conditions imply that 
\begin{equation}\label{Eq:Yellow}
\H\left(\{\beta^*=0\}\right)=0,
\end{equation} 
which necessarily yields  
\begin{equation}\label{Eq:Yellow2}
\H\left(\{0<\beta^*<1\}\right)>0,
\end{equation} the required conclusion. Indeed, this follows from the volume constraint $\int_{\partial\O}\b=V_0$.

To prove \eqref{Eq:Yellow}, we first compute the equation satisfied by the function $\Phi_{\beta^*}=u_\b p_\b$. By direct computation, we have
\begin{equation}\label{Eq:10}
\n\Phi_{\beta^*}=u_\b\n p_\b+p_\b\n u_\b, -\Delta \Phi_\b=-p_\b\Delta u_\b-u_\b \Delta p_\b-2 \n u_\b\cdot \n p_\b.
\end{equation}
Let us first set $$B:=-2\frac{\n p_\b}{p_\b}$$ and $$V:=\frac{f}{u_\b}+2\frac{|\n p_\b|^2}{p_\b^2}.$$ We then note that
$$
\n u_\b=\frac{\n \Phi_\b}{p_\b}-\frac{u_\b}{p_\b}\n p_\b=\frac{\n \Phi_\b}{p_\b}-\frac{\Phi_\b}{p_\b^2}\n p_\b.
$$ 
Plugging this expression into \eqref{Eq:10} yields
\begin{align*}
-\Delta \Phi_\b&=fp_\b-2\left(\frac{\n \Phi_\b}{p_\b}-\frac{\Phi_\b}{p_\b^2}\n p_\b\right)\cdot \n p_\b \\
&=\Phi_\b \frac{f}{u_\b}+2\Phi_\b\frac{|\n p_\b|^2}{p_\b^2}+ \langle\n \Phi_\b, B\rangle
\\&=V\Phi_\b +\langle \n \Phi_\b, B\rangle.
\end{align*}
Since $\inf_\O \min\{u_\b, p_\b\}>0$ it follows that $\inf_\O \Phi_\b\geq 0$. We also have $V\geq 0$. By the strong maximum principle we necessarily have 
\begin{equation}\label{Eq:Run}
\min_{\overline \O} \Phi_\b=\min_{\partial \O}\Phi_\b \text{ and this minimum is never reached inside $\O$.}
\end{equation} 
According to the maximum principle of Hopf, if we pick a minimum point $x^*\in \partial \O$ of $\Phi_\b$, there holds
\begin{equation}\label{Eq:Hopf}
\frac{\partial \Phi_\b}{\partial \nu}(x^*)<0.
\end{equation}We fix this point.

We now argue by contradiction and assume that $\H\left(\{\b=0\}\right)>0.$ Given that $\b$ satisfies first order optimality conditions, we can choose such a  minimum point $x^*$ that satisfies
$$x^*\in \{\beta^*=0\}.$$
We finally compute the boundary conditions on $\Phi_\b$. Since 
$$\frac{\partial \Phi_\b}{\partial \nu}=\frac{\partial u_\b}{\partial \nu}p_\b+\frac{\partial p_\b}{\partial \nu}u_\b=-2\beta^*\Phi_\b+j'(u_\b)u_\b,
$$
we get
\begin{equation}\label{CDV}
\frac{\partial \Phi_\b}{\partial \nu}+2\b\Phi_\b=j'(u_\b)u_\b.
\end{equation}
Going back to our minimum point $x^*\in \{\b=0\}$ and to \eqref{Eq:Hopf} we should have $\frac{\partial \Phi_\b}{\partial \nu}(x^*)\leq 0$ on the one hand, and $ \frac{\partial \Phi_\b}{\partial \nu}=-2\b\Phi_\b+j'(u_\b)u_\b=j'(u_\b)u_\b>0$ on the other hand. The last condition comes from the fact that $j$ satisfies \eqref{Eq:Hypj}. This is a contradiction, and the conclusion follows.

\paragraph{Proof of $\boldsymbol{(ii)}$.} Let us use the same notation as in the proof of Theorem~\ref{Th:BgbgBound} in section~\ref{sec:proofTh:BgbgBound}. Let $\beta^*$ denote a solution of Problem~\eqref{Eq:MinBound}, whose existence is guaranteed by Lemma~\ref{Le:Exist}.
We use the Hilbert basis $\{\phi_k\}_{k\in \N}$ of $L^2(\partial\O)$ given by \eqref{def:phik}, associated to the sequence of eigenvalues $\{\sigma_k\}_{k\in\N}$.
According to the first item of Theorem~\ref{Th:RelaxBound}, one has $\H\left(\{0<\beta^*<1\}\right)>0$.

Assume the existence of $C>0$ such that 
$$
j''(u)\leq -Cj'(0)=-C\sup_{u\in [0,U_0(f)]} j'(u).
$$

Let us argue by contradiction, assuming that $\H\left(\{\beta^*=1\}\right)=0$.
Let $h\in L^\infty(\O)$ be such that $\int_\O h=0$. It follows from Footnote~\ref{footnotehAdm}  that $h$ is an admissible perturbation of $\beta^*$ in $\mathcal{B}(\partial\O)$. 
The function $-hu_\b\in L^2(\partial \O)$ expands as
$$
-hu_\b=\sum_{k=0}^\infty \alpha_k(h)\phi_k
\quad\text{with}\quad  
\alpha_k(h)=\int_{\partial \O}(-hu_\b)\phi_k
$$
for every $k\in \N$. Then, we also have 
$$ 
\dot u_\b=\sum_{k=0}^\infty\frac{\alpha_k(h)}{\sigma_k}\phi_k.
$$
Recall that, according to \eqref{Eq:Opt2}, we have
\begin{eqnarray*}
\ddot{\mathcal J}(\b)[h,h]&=& -2\int_{\partial \O}h\dot u_\b p_\b+\int_{\partial \O}j''(u_\b)\left(\dot u_\b\right)^2\\
&=& -2\int_{\partial \O}\Psi_\b hu_\b\dot u_\b +\int_{\partial \O}j''(u_\b)\left(\dot u_\b\right)^2
\end{eqnarray*}
where $\Psi_\b:=\frac{p_\b}{u_\b}$. Observe first that $p_\b \leq  z_\b\sup_{u\in [0,U_0]}j'(u)$ where $z_\b$ is the unique solution to 
$$
\begin{cases}
-\Delta z_\b=0&\text{ in }\O, \\
\frac{\partial z_\b}{\partial \nu}+\b z_\b=1 &\text{ on }\partial\O.
\end{cases}
$$ 
Indeed, the function $P:= p_\b-z_\b  \sup_{u\in [0,U_0]}j'(u)$ is harmonic, and therefore reaches its maximal value on $\partial \O$. Furthermore, according to the Hopf maximum principle, $\partial_\nu P>0$ at this point, which yields easily that $P(\cdot)<0$ on $\partial\O$.

We isolate the following result, which follows from exactly the same arguments as in Lemma \ref{Le:RegRobin}. 
\begin{lemma}\label{lem:0823}
Let $\O$ be a bounded open set of $\R^n$ such that $\partial\O$ is $\mathscr{C}^2$. For $\beta\in\mathcal B( \partial\O)$, we define
$$
\mathcal K(\beta)=\frac{\max_{\overline{\O}} z_\beta}{\min_{\overline{\O}} u_\beta}.
$$
One has
$$
K:=\sup_{\beta\in\mathcal B(\partial\O)}\mathcal K(\beta) <+\infty.
$$
\end{lemma} 
Using the notations of this Lemma, we thus have 
\begin{equation}\left|\Psi_\b \right|\leq \frac{\max p_\b}{\min u_\b}\leq \sup_{[0;U_0(f)]}j'(u) K.\end{equation}

Let us introduce
$$
h_{\alpha_1,\alpha_2}=\frac{\alpha_1\phi_1+\alpha_2\phi_2}{u_\b}
\quad \text{with }\alpha_1=\int_{\partial\O}\frac{\phi_2}{u_\b}\text{ and }\alpha_2=-\int_{\partial\O}\frac{\phi_1}{u_\b},
$$
so that $\int_\O h_{\alpha_1,\alpha_2}=0$. Recall that $h_{\alpha_1,\alpha_2}$ is admissible since $\H\left(\{\beta^*=1\}\right)=\H\left(\{\beta^*=0\}\right)=0$.

From the assumption on $j$ and the Cauchy-Schwarz inequality,  we have
\begin{eqnarray*}
\ddot{\mathcal J}(\b)[h_{\alpha_1,\alpha_2},h_{\alpha_1,\alpha_2}]&\leq & \Vert \Psi_\b\Vert_{\infty}\int_{\partial\O}|h_{\alpha_1,\alpha_2}u_\b||\dot u_\b|-C\sup_{u\in [0,U_0]}j'(u)\int_{\partial\O}(\dot u_\b)^2\\
&\leq & \Vert \Psi_\b\Vert_{\infty}\left(\int_{\partial\O}h_{\alpha_1,\alpha_2}^2u_\b^2\right)^{1/2}\left(\int_{\partial\O} (\dot u_\b)^2\right)^{1/2}-C\sup_{u\in [0,U_0(f)]}j'(u)\int_{\partial\O}(\dot u_\b)^2\\
& \leq & \sup_{u\in [0,U_0(f)]}j'(u)\left[K\left(\alpha_1^2+\alpha_2^2\right)^{1/2}\left(\frac{\alpha_1^2}{\sigma_1^2}+\frac{\alpha_2^2}{\sigma_2^2}\right)^{1/2}-C\left(\frac{\alpha_1^2}{\sigma_1^2}+\frac{\alpha_2^2}{\sigma_2^2}\right)\right]\\
& \leq & \sup_{u\in [0,U_0(f)]}j'(u)\left(\frac{\alpha_1^2}{\sigma_1^2}+\frac{\alpha_2^2}{\sigma_2^2}\right)^{1/2}\left[K\left(\alpha_1^2+\alpha_2^2\right)^{1/2}-C\left(\frac{\alpha_1^2}{\sigma_1^2}+\frac{\alpha_2^2}{\sigma_2^2}\right)^{\frac12}\right]\\
& \leq & \sup_{u\in [0,U_0(f)]}j'(u)\left(\frac{\alpha_1^2}{\sigma_1^2}+\frac{\alpha_2^2}{\sigma_2^2}\right)^{1/2}\left[K\left(\alpha_1^2+\alpha_2^2\right)^{1/2}-C\left(\frac{\alpha_1^2+\alpha_2^2}{\sigma_2^2}\right)^{\frac12}\right]\\
&\leq & \sup_{u\in [0,U_0(f)]}j'(u)\left(\alpha_1^2+\alpha_2^2\right)^{1/2}\left(\frac{\alpha_1^2}{\sigma_1^2}+\frac{\alpha_2^2}{\sigma_2^2}\right)^{1/2}\left(K-\frac{C}{\sigma_2}\right).
\end{eqnarray*}
 Furthermore, according to the Courant-Fisher principle, one has
\begin{eqnarray*}
\sigma_2&=&\min_{\substack{E_2\subset W^{1,2}(\O)\\ \text{subspace of dim. 2}}}\max_{\substack{v\in E_2\\ v\neq 0}}\frac{\int_\O |\nabla v^2|+\int_{\partial\O}\b v^2}{\int_{\O}v^2}\\
&\leq &\min_{\substack{E_2\subset W^{1,2}(\O)\\ \text{subspace of dim. 2}}}\max_{\substack{v\in E_2\\ v\neq 0}}\frac{\int_\O |\nabla v^2|+\int_{\partial\O}v^2}{\int_{\O}v^2}=\Lambda_2(\O)>0,
\end{eqnarray*}
where $\Lambda_2(\O)$ denotes the second Steklov eigenvalue of the domain:
$$
\begin{cases}
-\Delta \varphi_k=0&\text{ in }\O, \\
\frac{\partial \varphi_k}{\partial \nu}+ \varphi_k=\Lambda_k(\O)\varphi_k&\text{ on }\partial\O.
\end{cases}
$$ 
Therefore, if $C$ is such that $\Lambda_2(\O)K<C$, one has $\ddot{\mathcal J}(\b)[h_{\alpha_1,\alpha_2},h_{\alpha_1,\alpha_2}]<0$ and thus
$$
{\mathcal J}(\b+\e h_{\alpha_1,\alpha_2})-{\mathcal J}(\b)=\frac{\e^2}{2} \ddot{\mathcal J}(\b)[h_{\alpha_1,\alpha_2},h_{\alpha_1,\alpha_2}]+\operatorname{o}(\e^2)<0,
$$
whenever $\e>0$ is chosen small enough. This is in contradiction with the optimality of $\b$ whence the result.

%
\section{Proof of Theorem~\ref{Theo:minmaxNRJ}}
We recall that we work with the energy functional
$$
\mathcal F(\beta):=\int_{\O} fu_\beta.
$$
\begin{itemize}
\item[(i)]\textbf{Proof of $\boldsymbol{(i)}$:} Let $\beta\in\mathcal{B}(\partial\O)$ and $h$ denote an admissible perturbation at $\beta$ (see Footnote~\ref{footnotehAdm} for the definition). By mimicking the computations of Section~\ref{Se:OptBound}, one computes
$$
\dot{\mathcal F}(\beta)[h]=-\int_{\partial \O}h u_\beta ^2\quad \text{and}\quad
\ddot{\mathcal F} (\b)[h,h]= -2\int_{\partial\O}hu_\b\dot u_\b,
$$
where $\dot u_\beta$ solves
$$
\begin{cases}
-\Delta \dot u_\beta=0&\text{ in }\O, 
\\ \frac{\partial \dot u_\beta}{\partial \nu}+\beta \dot u_\beta=-hu_\b &\text{ on }\partial \O,
\end{cases}
$$
Let us use the Hilbert basis $\{\phi_k\}_{k\in \N}$ of $L^2(\partial\O)$ given by \eqref{def:phik}, associated with the sequence of eigenvalues $\{\sigma_k\}_{k\in\N}$. The function $-hu_\beta\in L^2(\partial \O)$ expands as
$$
-hu_\beta=\sum_{k=0}^\infty \alpha_k(h)\phi_k
\quad\text{with}\quad   
\alpha_k(h)=\int_{\partial \O}(-hu_\beta)\phi_k
\text{ for every $k\in \N$ }$$and we also have 
$$ 
\dot u_\beta=\sum_{k=0}^\infty\frac{\alpha_k(h)}{\sigma_k}\phi_k.
$$
As a consequence$$
\ddot{\mathcal F} (\b)[h,h]=2\sum_{k=0}^\infty\frac{\alpha_k(h)^2}{\sigma_k}
$$
and we easily infer that $\mathcal J$ is strictly convex.  

Let $\b$ be a solution of Problem~\eqref{Eq:MaxNRJ}. Let us assume by contradiction that the set $\mathcal I:=\{0<\b<1\}$ has positive measure. Let $\tilde \beta$ denote any element of $\mathcal{B}(\partial\O)$ equal to $\beta$ on $\O\backslash \mathcal I$, such that $\tilde \beta\neq \beta$ a.e. on $\mathcal I$ and $\int_{\mathcal I}\b=\int_{\mathcal I}\tilde \beta$. Then, $\b+\e h$ where $h=\tilde \beta-\b$ is admissible and
$$
\mathcal{F}(\b+\e h)-\mathcal{F}(\b)=\frac{\e^2}{2}\ddot{\mathcal J} (\b)[h,h]+\operatorname{o}(\e^2)>0
$$
 whenever $\e$ is small enough. We have thus reached a contradiction and it follows that $\H (\mathcal I)=0$.

\item[(iii)]\textbf{Proof of $\boldsymbol{(ii)}$} According to the analysis above, the mapping $\mathcal{B}(\partial \O)\ni \beta \mapsto \mathcal{F}(\beta)$ is convex. Since we are dealing with a minimization problem, we get that $\b$ solves Problem~\eqref{Eq:MinNRJ} if, and only if $\dot{\mathcal J}(\beta)[h]\geq 0$ for every admissible perturbation $h$. By using the expression of $\dot{\mathcal J}(\beta)[h]$ obtained previously, it is standard that the first order optimality conditions read as follows: there exists a positive real number $\lambda$ such that 
\begin{enumerate}
\item $\{0<\beta^*<1\}\subset\left\{u_{\beta^*}^2=\lambda\right\}$,
\item $\{\beta^*=1\}\subset \{u_{\beta^*}^2\geq \lambda\}$,
\item $\{\beta^*=0\}\subset \{u_{\beta^*}^2\leq \lambda\}$.
\end{enumerate}
Since these conditions are sufficient and necessary, it follows that $\b$ solves Problem~\eqref{Eq:MinNRJ} if and only if $u_\b$ solves the overdetermined system given by \eqref{Eq:MainRobin} complemented by the optimality conditions above. Therefore, to conclude, it is enough to check that the particular function $\b$ given by \eqref{betaStarCasPart} satisfies the optimality system above.

Observe first that, according to the Hopf maximum principle, the function $v_\O$ reaches its minimal value a.e. on the boundary of $\O$ and therefore, one has $\partial_\nu v_\O<0$ on $\partial\O$, meaning that $\b>0$ on $\partial\O$.
Moreover, because of the assumptions on $V_0$, one has
$$
\b < V_0^\O \frac{\Vert \partial_\nu v_\O\Vert_{L^\infty(\partial\O)}}{\int_{\partial\O}\partial_\nu v_\O}=1\text{ a.e. in }\partial\O\quad \text{and}\quad \int_\O\b =V_0,
$$
so that $\b\in \mathcal{B}(\partial\O)$ and $\{0<\b<1\}=\partial\O$.

We set
$$
\lambda=\frac{-\int_{\partial\O}\partial_\nu v_\O}{V_0}, \quad\text{and}\quad u_\b^\lambda=\lambda+v_\O.
$$
Straightforward computations show that $u_\b^\lambda$ coincide with the solution $u_\b$ of \eqref{Eq:MainRobin} and that 
$$
\{0<\beta^*<1\}=\left\{u_{\beta^*}^2=\lambda\right\}.
$$
The expected conclusion follows.

\item[(iii)]\textbf{Proof of $\boldsymbol{(iii)}$:} Using the same arguments as in $(ii)$, since $\{0<\b_{V_0}<1\}=\partial\O$, one sees that $\b_{V_0}$ solves Problem~\eqref{Eq:MinNRJ} if, and only if there exists $\lambda\geq 0$ such that $u_{\b_{V_0}}$ solves the overdetermined system
$$
\begin{cases}
-\Delta u_{\b_{V_0}}=1 &\text{ in }\O, \\
u_{\b_{V_0}}=\lambda &\text{ on }\partial\O\\ 
\frac{\partial u_{\b_{V_0}}}{\partial \nu}=-\frac{V_0}{|\partial\O|} \lambda &\text{ on }\partial\O.
\end{cases}
$$
Setting $v_{\b_{V_0}}=u_{\b_{V_0}}-\lambda$, this overdetermined system is equivalent with 
$$
\begin{cases}
-\Delta v_{\b_{V_0}}=1 &\text{ in }\O, \\
v_{\b_{V_0}}=0 &\text{ on }\partial\O\\ 
\frac{\partial v_{\b_{V_0}}}{\partial \nu}=c_{V_0} &\text{ on }\partial\O.
\end{cases}\quad \text{with}\quad c_{V_0}=-\frac{V_0}{|\partial\O|} \lambda<0.
$$
We can now apply Serrin's theorem (see \cite{MR333220} for the original proof, \cite{Weinberger1971} for a simpler proof that holds in the case $f\equiv 1$, and \cite{Nitsch2017} for a survey of the proofs of this theorem), which fully characterises such overdetermined elliptic problems: this system has a solution if, and only if, $\O$ is a ball. 
\end{itemize}
This concludes the proof of the Theorem.

\medskip

%
%
%


\appendix
\begin{center}
\fbox{\textsf{\Large Appendix}}
\end{center}
\addcontentsline{toc}{part}{Appendices}
\section{Convergence of $u^\Gamma_\alpha$ towards $v^\Gamma$ as $\alpha\to+\infty$}\label{append_vGamma}
We investigate in this section the asymptotic behaviour of $u^\Gamma_\alpha$ as $\alpha\to+\infty$, in the simple case where $g_0=g_0(x,u)$ does not depend on $u$. With a slight abuse of notation, we write $g_0=g_0(x)$.
\begin{proposition}
Let $\Omega$ be a connected bounded open set of class $\mathscr{C}^1$. Let $\Gamma\subset \partial\Omega$, with $\H(\Gamma)>0$, and let $g_0\in L^2(\Omega)$. Assume there exists $\sigma_0>0$ such that $A \geq \sigma_0 \operatorname{Id}$ a.e. in $\Omega$ in the sense of bilinear forms. The family $(u^\Gamma_{\alpha})_{\alpha>0}$ converges to $v^\Gamma$, weakly in $W^{1,2}(\Omega)$ and strongly in $L^2(\Omega)$.
\end{proposition}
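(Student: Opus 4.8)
The plan is to proceed by the direct method of the calculus of variations, exploiting the variational characterisations of both $u^\Gamma_\alpha$ and $v^\Gamma$ and treating the large Robin parameter $\alpha$ as a penalisation that enforces the Dirichlet condition on $\Gamma$ in the limit. Throughout I work with the weak formulation: $u^\Gamma_\alpha\in W^{1,2}(\Omega)$ is characterised by
\[
\int_\Omega A\nabla u^\Gamma_\alpha\cdot\nabla\varphi+\alpha\int_\Gamma u^\Gamma_\alpha\varphi=\int_\Omega g_0\varphi\qquad\forall\varphi\in W^{1,2}(\Omega),
\]
while $v^\Gamma$ is the unique element of the closed subspace $V_\Gamma:=\{\varphi\in W^{1,2}(\Omega):\varphi=0\text{ on }\Gamma\}$ satisfying the same identity with the $\Gamma$-term dropped. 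Well-posedness of both problems is guaranteed by $\H(\Gamma)>0$, the ellipticity $A\geq\sigma_0\operatorname{Id}$, and a Poincar\'e-type inequality on $V_\Gamma$ via Lax--Milgram.

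First I would derive uniform bounds. Testing the equation against $u^\Gamma_\alpha$ itself gives the energy identity
\[
\int_\Omega A\nabla u^\Gamma_\alpha\cdot\nabla u^\Gamma_\alpha+\alpha\int_\Gamma(u^\Gamma_\alpha)^2=\int_\Omega g_0u^\Gamma_\alpha.
\]
For $\alpha\geq1$ one has $\int_\Omega A\nabla u\cdot\nabla u+\alpha\int_\Gamma u^2\geq\min(\sigma_0,1)\bigl(\int_\Omega|\nabla u|^2+\int_\Gamma u^2\bigr)$, and since $u\mapsto(\int_\Omega|\nabla u|^2+\int_\Gamma u^2)^{1/2}$ is equivalent to the $W^{1,2}$-norm, the coercivity constant is independent of $\alpha$. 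Cauchy--Schwarz on the right-hand side then yields a uniform bound $\Vert u^\Gamma_\alpha\Vert_{W^{1,2}(\Omega)}\leq C$, and the same identity gives $\int_\Gamma(u^\Gamma_\alpha)^2\leq C/\alpha\to0$.

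Next I would extract a limit and identify it. By the uniform bound, any sequence $\alpha_n\to+\infty$ admits a subsequence along which $u^\Gamma_{\alpha_n}\rightharpoonup u_*$ weakly in $W^{1,2}(\Omega)$, strongly in $L^2(\Omega)$ by Rellich--Kondrachov, and with traces converging strongly in $L^2(\partial\Omega)$ by compactness of the trace operator. The bound $\int_\Gamma(u^\Gamma_{\alpha_n})^2\to0$ then forces $\int_\Gamma u_*^2=0$, i.e. $u_*\in V_\Gamma$. Passing to the limit in the weak formulation restricted to test functions $\varphi\in V_\Gamma$, for which the penalisation term $\alpha\int_\Gamma u^\Gamma_\alpha\varphi$ vanishes identically, yields $\int_\Omega A\nabla u_*\cdot\nabla\varphi=\int_\Omega g_0\varphi$ for every $\varphi\in V_\Gamma$, which is exactly the variational characterisation of $v^\Gamma$. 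By uniqueness $u_*=v^\Gamma$, and since the limit does not depend on the subsequence, the whole family converges, weakly in $W^{1,2}(\Omega)$ and strongly in $L^2(\Omega)$.

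The main obstacle I anticipate is the careful handling of traces rather than any deep analytic difficulty: one must check that the coercivity constant genuinely does not degenerate as $\alpha\to+\infty$ (achieved by retaining only the $\int_\Gamma u^2$ contribution and discarding the spurious $(\alpha-1)$ factor), and one must invoke the \emph{compactness}, not merely the continuity, of the trace map $W^{1,2}(\Omega)\to L^2(\partial\Omega)$ in order to upgrade the weak convergence into the strong boundary convergence needed to conclude that $u_*=0$ on $\Gamma$. Everything else reduces to routine applications of the direct method and Lax--Milgram.
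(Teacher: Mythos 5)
Your proof is correct. Its first half --- the energy identity, coercivity with an $\alpha$-independent constant via the equivalence of $\bigl(\int_\Omega|\nabla u|^2+\int_\Gamma u^2\bigr)^{1/2}$ with the $W^{1,2}$-norm (valid since $\H(\Gamma)>0$ and $\Omega$ is connected), the resulting uniform bound together with $\int_\Gamma (u^\Gamma_\alpha)^2=O(1/\alpha)$, and the extraction by Rellich--Kondrachov and trace compactness of a limit $u_*$ vanishing on $\Gamma$ --- is exactly the paper's argument. Where you genuinely differ is in identifying $u_*$: the paper uses the variational characterisation, comparing $\mathcal F_{\alpha_n}(u^\Gamma_{\alpha_n})=\min_{W^{1,2}(\Omega)}\mathcal F_{\alpha_n}\leq\min_{W^{1,2}_\Gamma(\Omega)}\mathcal F_0=\mathcal F_0(v^\Gamma)$ with the weak lower semicontinuity bound $\liminf_n\mathcal F_{\alpha_n}(u^\Gamma_{\alpha_n})\geq\mathcal F_0(u_*)$, and concluding by uniqueness of the minimiser of $\mathcal F_0$ over $W^{1,2}_\Gamma(\Omega)$; you instead pass to the limit directly in the weak formulation tested against $\varphi\in V_\Gamma=W^{1,2}_\Gamma(\Omega)$, for which the penalisation term $\alpha_n\int_\Gamma u^\Gamma_{\alpha_n}\varphi$ is identically zero, and conclude by Lax--Milgram uniqueness. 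Your route is somewhat more elementary (no energy comparison or lower semicontinuity is needed) and does not use the symmetry of $A$ that is implicit in identifying $u^\Gamma_\alpha$ with the minimiser of $\mathcal F_\alpha$; the paper's energy route gives in exchange the convergence of the energies $\mathcal F_{\alpha_n}(u^\Gamma_{\alpha_n})\to\mathcal F_0(v^\Gamma)$, which is the natural starting point for upgrading to strong $W^{1,2}$ convergence and matches the $\Gamma$-convergence viewpoint evoked in the introduction. One remark on your closing comment: compactness of the trace operator is sufficient but not actually necessary to get $u_*\in V_\Gamma$, since the trace is weakly continuous and weak lower semicontinuity of the $L^2(\Gamma)$ seminorm already yields $\int_\Gamma u_*^2\leq\liminf_n\int_\Gamma(u^\Gamma_{\alpha_n})^2=0$.
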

 \begin{proof}
 It should be noted that the regularity assumptions on $\O$ is central as it guarantees the compactness of the trace operator $\operatorname{Tr}:W^{1,2}(\O)\to L^2(\partial \O)$.  We write $\operatorname{Tr}_\Gamma$ for the operator that maps $u$ to $\mathds 1_\Gamma\operatorname{Tr}u.$
 Multiplying the first equation of \eqref{Eq:MainIntro} by $u^\Gamma_\alpha$ and integrating by parts gives
 $$\sigma_0\int_\O \left|\n u_\alpha^\Gamma\right|^2+\alpha\int_\Gamma \left(u_\alpha^\Gamma\right)^2\leq \Vert g\Vert_{L^2(\O)}\Vert u_\alpha^\Gamma\Vert_{L^2(\O)}.$$ By continuity of the trace operator and by Lemma \ref{Le:Est}, there exists $C_0=C_0(\sigma_0,\Omega)>0$ such that
 \begin{equation}\label{Eq:Arg} C_0\Vert u^\Gamma_\alpha\Vert_{W^{1,2}(\Omega)}^2\leq \sigma_0\int_{\Omega} |\nabla u^\Gamma_\alpha|^2+\alpha \int_\Gamma (u^\Gamma_\alpha)^2\leq \Vert g\Vert_{L^2(\Omega)}\Vert u^\Gamma_\alpha\Vert_{W^{1,2}(\Omega)}
\end{equation}
Let $(\alpha_n)_{n\in \N}$ be an increasing sequence of positive number such that $\lim_{n\to +\infty}\alpha_n=+\infty$.
As the family $\left\{u^\Gamma_{\alpha_n}\right\}_{n\in\N}$ is bounded in $W^{1,2}(\Omega)$ by \eqref{Eq:Arg}, the Rellich-Kondrachov theorem, ensures that it converges, up to a subsequence, to a certain $\bar u\in W^{1,2}(\Omega)$ weakly in $W^{1,2}(\Omega)$ and strongly in $L^2(\Omega)$. With a slight abuse of notation, this subsequence is still written $\left\{u^\Gamma_{\alpha_n}\right\}_{n\in\N}$.
Since the trace operator is compact, the sequence $\left\{\operatorname{Tr}_\Gamma u^\Gamma_{\alpha_n}\right\}_{n\in\N}$ converges to $\operatorname{Tr}_\Gamma \bar u$ in $L^2(\Gamma)$. As the sequence $\left\{\alpha_n \int_\Gamma (u^\Gamma_{\alpha_n})^2\right\}_{n\in\N}$ is bounded a we must have $\operatorname{Tr}_\Gamma \bar u=0$ in $L^2(\Gamma)$.

Let us introduce the space $W^{1,2}_\Gamma(\Omega)$ as the subspace of functions $\varphi$ in $W^{1,2}(\Omega)$ whose trace vanishes on $\Gamma$. Recall that $u^\Gamma_\alpha$ solves the minimization problem 
$$
\min_{u\in W^{1,2}(\Omega)}\mathcal{F}_\alpha(u)\quad \text{where }\quad \mathcal{F}_\alpha(u)=\frac12  \int_{\Omega} \langle A\nabla u,\nabla u\rangle+\frac{\alpha}{2} \int_\Gamma u^2-\int_\Omega fu.
$$  
By minimality, one has, for any $n\in \N$,
$$
\min_{u\in W^{1,2}(\Omega)}\mathcal{F}_{\alpha_n}(u)=\mathcal{F}_{\alpha_n}(u^\Gamma_{\alpha_n})\leq \min_{u\in W^{1,2}_\Gamma(\Omega)}\mathcal{F}_{\alpha_n}(u)=\min_{u\in W^{1,2}_\Gamma(\Omega)}\mathcal{F}_{0}(u)=\mathcal{F}_{0}(v^\Gamma),
$$  
where $v^\Gamma$ solves Problem~\eqref{Eq:MainIntro2} with mixed Dirichlet-Neumann boundary conditions.
Furthermore, as $A$ is uniformly positive in the sense of bilinear forms, the map  $W^{1,2}(\Omega)\ni u\mapsto \int_{\Omega}\langle A\n u\,, \n u\rangle$ is convex, and, so, weakly lower semi-continuous. Hence, we have
\begin{eqnarray*}
\liminf_{n\to +\infty}\frac12  \int_{\Omega}\langle A\n u_{\alpha_n}^\Gamma\,, \n  u_{\alpha_n}^\Gamma\rangle+\frac{\alpha_n}{2} \int_\Gamma (u_{\alpha_n}^\Gamma)^2-\int_\Omega fu_{\alpha_n}^\Gamma & \geq & \frac12  \int_{\Omega}\langle A\nabla \bar u\,, \n \bar u\rangle-\int_\Omega f\bar u\\
&\geq & \min_{u\in W^{1,2}_\Gamma(\Omega)}\mathcal{F}_{0}(u).
\end{eqnarray*}
Combining both inequalities above, it follows that
$$
\min_{u\in W^{1,2}_\Gamma (\Omega)}\frac12  \int_{\Omega}\sigma |\nabla u|^2-\int_\Omega fu=\frac12  \int_{\Omega}\sigma |\nabla \bar u|^2-\int_\Omega f\bar u,
$$
and by uniqueness of the minimiser of this last problem, we obtain that $\bar u=v^\Gamma$. Thus,  the sequence $\left\{u^\Gamma_{\alpha_n}\right\}_{n\in\N}$ has a unique closure point $v^\Gamma$. It follows that the entire sequence $\left\{u^\Gamma_{\alpha_n}\right\}_{n\in\N}$ converges to $v^\Gamma$, weakly in $W^{1,2}(\Omega)$ and strongly in $L^2(\Omega)$.
 \end{proof}
 \section{Proof of results stated in section \ref{Se:Prelim}}\label{Ap:Prelim}
 
\begin{proof}[Proof of Lemma \ref{Le:Est}]
Of course, it suffices to prove that there exists $C>0$ such that

\begin{equation}\label{Eq:Est2}\forall \beta\in \mathcal B(\partial \O)\,, \forall v\in W^{1,2}(\O)\,,  C\Vert v\Vert_{L^2(\O)}^2\leq \int_\O |\nabla v|^2 +\int_{\partial\O}\beta v^2.
\end{equation}
To prove \eqref{Eq:Est2} we argue by contradiction: should no such constant $C$ exist, there exists a sequence $\{v_n\,, \beta_n\}_{n\in \N}\in \left(W^{1,2}(\O)\times \mathcal B(\partial \O)\right)^\N$ such that$$
\Vert v_n\Vert_{L^2(\O)}^2=1\quad \text{and}\quad \int_\O |\nabla v_n|^2 +\int_{\partial\O} \beta_nv_n^2<\frac{1}{n}
$$
By the Rellich-Kondrachov theorem, there exists $\bar v\in W^{1,2}(\O)$ such that $\{v_n\}_{n\in\N}$ converges, up to a subsequence, to $\bar v$, weakly in $W^{1,2}(\O)$ and strongly in $L^2(\O)$. Denoting this subsequence by $\{v_n\}_{n\in\N}$ with a slight abuse of notation, it follows that
$$
\Vert \bar v\Vert_{L^{2}(\O)}^2= 1\quad \text{and}\quad \int_\O |\nabla \bar v|^2\leq \liminf_{n\to +\infty}\int_\O |\nabla v_n|^2=0.
$$ Thus $\overline v$ is a positive constant, say $\overline v_0$. On the other hand, since $\mathcal B(\partial \O)$ is compact for the weak $L^\infty-*$ topology, there exists $\beta \in \mathcal B(\O)$  such that, still up to a subsequence, 
$$\beta_n \underset{n\to \infty}\rightharpoonup \beta.$$
By compactness of the trace operator, $\{v_n\}_{n\in\N}$ converges strongly, in $L^2(\partial \O)$, to $\overline v$. Thus, passing to the limit in 
$$ \int_\O |\nabla v_n|^2 +\int_{\partial\O} \beta_nv_n^2<\frac{1}{n}$$ yields
$$ \int_\O |\nabla \overline v|^2 +\int_{\partial\O} \beta \overline v_n^2=0.$$Since $\int_{\partial \O}\beta=V_0$, this is impossible as $\overline v$ is a positive constant.

\end{proof}
\begin{proof}[Proof of Lemma \ref{Le:RegRobin}]
The proof of this result relies results for Neumann boundary conditions: consider the problem
\begin{equation}\label{Eq:RegNeuLp}
\begin{cases}
-\Delta u_{f,g}^N=f&\text{ in } \O\,,
\\ \frac{\partial u_{f,g}^N}{\partial \nu}=g&\text{ on }\partial \O.
\end{cases}
\end{equation}
The following regularity holds from, for example,  \cite[Theorem 4.4]{Simader1992}:
assume $\O$ has a $\mathscr C^2$ boundary.
Let $f\in L^q(\O)\,, g\in W^{-\frac1q,q}(\partial \O)$ satisfy the compatibility condition 
$$\int_{\O} f=\int_{\partial \O}g.$$ Then, there exists a $W^{1,q}(\O)$ solution $u_{f,g}$ of \eqref{Eq:RegNeuLp}. Furthermore, for any such solution, 
\begin{equation}\Vert \n u_{f,g}\Vert_{L^q(\O)}\leq C \left(\Vert f\Vert_{L^q(\O)}+\Vert g\Vert_{W^{-\frac1q,q}(\partial \O)}\right).\end{equation}

We turn back to the proof of Lemma \ref{Le:RegRobin}: let us consider, for any $\beta\in \mathcal B(\partial \O)$, the energy functional
\begin{equation}\mathcal E_{\beta,f}:W^{1,2}(\O)\ni u\mapsto \frac12\int_\O |\n u|^2-\int_\O fu+\int_{\partial \O}\beta u^2.\end{equation}
By Lemma \ref{Le:Est} this energy functional is coercive. As a consequence, it admits a minimiser. It is immediate to see that uniqueness holds for Equation \eqref{Eq:MainRobin}. Thus, we have obtained a unique solution $u_\beta\in W^{1,2}(\O)$.

From the Sobolev embeddings $W^{1,2}(\O)\hookrightarrow W^{\frac12,2}(\partial \O)\hookrightarrow L^{2^*_{\partial \O}}(\partial \O)$ where $2^*_{\partial \O}=\frac{2(n-1)}{n-2}$ (if $n\geq 2$, the case $n=1$ being trivial) and the fact that $\beta \in L^\infty(\partial \O)$ we obtain that $u_{\beta}$ solves a Neumann problem with Neumann data $g:=-\beta u_{\beta}\in L^{2^*_{\partial \O}}(\partial \O)$. Furthermore, for the constant $C$ given by Lemma \ref{Le:Est},
$$\Vert g\Vert_{L^{2^*_{\partial \O}}(\partial \O)}\leq \frac1C \Vert f\Vert_{L^2(\O)}.$$
Indeed, we have
\[C\Vert u_\beta\Vert_{W^{1,2}(\O)}^2\leq\int_\O|\n u_\beta|^2+\int_{\partial \O} \beta u_\beta^2\leq \Vert f\Vert_{L^2(\O)}\Vert u_\beta\Vert_{W^{1,2}(\O)},\] and it suffices to invoke the continuity of the trace application, and of the bound $|\beta|\leq 1$.
 Since $\O$ is $\mathscr C^2$, it follows from the regularity for Neumann problems that 
$$u_{\beta}\in W^{1,2^*_{\partial \O}}(\O).$$ We can then bootstrap this argument and obtain successively that 
\begin{equation}\forall k \in \N\,, u_{\beta}\in W^{1,q_k}(\O)\end{equation} where the sequence $\{q_k\}_{k\in \N}$ is defined, by recurrence, as
$$q_{k+1}:=\frac{(n-1)q_k}{n-1-\frac{q_k}2} \text{ if }\frac{q_k}2<n-1\,, q_k+1\text{ else.}$$
The conclusion follows: for any $\beta \in \mathcal B(\partial \O)$, for any $p\in [1;\infty)$, 
$$u_\beta \in W^{1,p}(\O)$$ and furthermore
$$\sup_{\beta \in \mathcal B(\partial \O)}\Vert u_\beta\Vert_{W^{1,p}(\O)}<\infty.$$
To obtain the uniform estimate $$\sup_{\beta \in \mathcal B(\partial \O)}\sup_\O u_\beta<\infty$$ (the symmetric estimate $\inf_{\beta \in \mathcal B(\partial \O)}\inf_\O u_\b>0$ is obtained in the same way) it suffices to take a maximising sequence $\{\beta_k\}_{k\in \N}$ for $\Vert u_\beta\Vert_{L^\infty(\O)}$. Up to a subsequence, $\{\beta_k\}_{k\in \N}$ weakly converges to $\beta \in \mathcal B(\partial \O)$. By Sobolev embeddings, $\{u_{\beta_k}\}_{k\in \N}$ is uniformly bounded in $\mathscr C^{0,\alpha}(\O)$ for some $\alpha>0$. Hence, it converges, strongly in $\mathscr C^0(\O)$, to $u_\beta$, which concludes the proof.

\end{proof}

\begin{proof}[Proof of Lemma \ref{Le:Exist}]
Let us first underline that the set $\mathcal B(\partial \O)$ defined in \eqref{Eq:AdmRobin} endowed with the weak-star topology of $L^\infty(\O)$ is compact. To apply the direct method in the calculus of variations, it suffices to show that all the functionals that define problems \eqref{Eq:MaxBound}-\eqref{Eq:MinBound}-\eqref{Eq:MaxDist}-\eqref{Eq:MinDist} are continuous under this weak $L^\infty-*$ topology. All these maps write as $\int_\O j(u_\beta)$ or $\int_{\partial \O}j(u_\beta)$. Since the functions $u_\beta$ are uniformly bounded from above by Lemma \ref{Le:RegRobin}, the dominated convergence theorem  implies that these functionals are continuous if the map 
$$\mathcal B(\partial \O)\ni\beta\mapsto u_\beta\in W^{1,2}(\O)$$ is continuous for the weak $L^\infty-*$ topology on $\mathcal B(\partial\O)$ and the weak $W^{1,2}$ topology on $W^{1,2}(\O)$. Indeed, it then suffices to invoke the compactness of the embeddings $W^{1,2}(\O)\hookrightarrow L^2(\O)$ and $W^{1,2}(\O)\hookrightarrow L^2(\partial \O)$.

Let us then prove the continuity of $\beta\mapsto u_\beta$ for these weak topologies. Let $\{\beta_n\}_{n\in\N}\in \mathcal{B}(\partial\O)^\N$ be a weakly converging sequence in $\mathcal B(\partial \O)$. Let $\beta \in \mathcal B(\O)$ be such that 
$$\beta_n \underset{n\to \infty}\rightharpoonup \beta.$$ In order to alleviate notations, we define, for any $n\in \N$,  $u_n$ as the solution of \eqref{Eq:MainRobin} associated with $\beta_n$. Our goal is to show
\begin{equation}\label{Eq:Jo}
u_n\underset{n\to \infty}\rightharpoonup u_\beta \text{ in }W^{1,2}(\O).\end{equation}
First of all, multiplying the main equation of \eqref{Eq:MainRobin} by $u_n$ and integrating by parts yields
$$
\int_\O |\nabla u_n|^2 +\int_{\partial\O}\beta_nu_n^2=\int_\O fu_n\leq \Vert f\Vert_{L^2(\partial \O)}\Vert u_n\Vert_{L^2(\O)}.
$$
From Lemma \ref{Le:Est}, it follows that $\{u_n\}_{n\in \N}$ is uniformly bounded in $L^2(\O)$ and, in turn, in $W^{1,2}(\O)$.
%
%
%
%
From the Rellich-Kondrachov theorem there exists $\bar u\in W^{1,2}(\O)$ such that $\{u_n\}_{n\in\N}$ converges, up to a subsequence, to $\bar u$ weakly in $W^{1,2}(\O)$ and strongly in $L^2(\O)$. By the compactness of the trace operator, $\{u_n\}_{n\in \N}$ converges to $\overline u$ strongly in $L^2(\partial \O)$. Passing to the limit in the weak formulation of \eqref{Eq:MainRobin}, we obtain that $\overline u$ is the solution of \eqref{Eq:MainRobin} associated with $\beta$. This concludes the proof.


\end{proof}

 \section{Definition of Steklov eigenvalues and eigenfunctions}\label{sec:appendSteklov}
 Namely, we consider the resolvent operator $T:L^2(\partial \O)\to L^2(\partial \O)$ defined for all $f \in L^2(\partial\O)$ by
$$
T(f)=\left.z_f\right|_{\partial \O}\text{ where $z_f$ is the unique solution of }\begin{cases}-\Delta z_f=0&\text{ in }\O, 
\\ \frac{\partial z_f}{\partial \nu}+\b z_f=f&\text{ on }\partial \O.\end{cases}
$$ 
By compactness of the trace operator and standard regularity estimates, $T$ is a compact operator. It is furthermore self-adjoint since, for any $f,g\in L^2(\partial \O)$ there holds
\begin{eqnarray*}
\int_{\partial \O}T(f)g&=&\int_{\partial \O}z_f\left(\frac{\partial z_g}{\partial \nu}+\b z_g\right)=\int_{\partial \O}\b z_fz_g+\int_\O z_f\Delta z_g-\int_\O z_g\Delta z_f+\int_{\partial \O}z_g\frac{\partial z_f}{\partial \nu}
\\&=&\int_{\partial \O}z_g\left(\frac{\partial z_f}{\partial \nu}+\b z_f\right)=\int_{\partial \O}T(g)f.
\end{eqnarray*}
Finally, $T$ is a positive operator: for any $f\in L^2(\partial \O)$ we have
$$
\int_{\partial \O}T(f)f=\int_{\partial \O}z_f\frac{\partial z_f}{\partial \nu}+\int_{\partial \O}\b z_f^2=\int_\O |\n z_f|^2+\int_{\partial \O}\b z_f^2.
$$
According to the spectral decomposition Theorem, there exists a non-increasing sequence of positive eigenvalues $\{r_k\}_{k\in \N}$ converging to zero and an associated family $\{\phi_k\}_{k\in \N}$ of eigenfunctions satisfying for all $k\in\N$, $T(\phi_k)=r_k \phi_k$. 
Furthermore, the family $\{\phi_k\}_{k\in \N}$ is a Hilbert basis of $L^2(\partial\O)$ and we have 
\begin{equation}
\begin{cases}
-\Delta \phi_k=0&\text{ in }\O, 
\\\frac{\partial \phi_k}{\partial \nu}+\beta_k\phi_k=\frac1{r_k}\phi_k&\text{ on }\partial \O, 
\end{cases}\quad \text{and}\quad \int_{\partial \O}\phi_k\phi_{k'}=\delta_{k,k'}.
\end{equation}
for all $(k,k')\in \N^2$. 
Let us set 
$\sigma_k:=\frac1{r_k}$, we have
\begin{equation}
\sigma_p\xrightarrow[p\to +\infty]{} +\infty\quad \text{and}\quad  \begin{cases}
-\Delta \phi_k=0&\text{ in }\O, 
\\\frac{\partial \phi_k}{\partial \nu}+\beta_k\phi_k=\sigma_k\phi_k&\text{ on }\partial \O, 
\\ \int_{\partial \O}\phi_k^2=1,
\end{cases}
\end{equation}
for all $k\in \N$. Moreover, 
\begin{equation}\forall k,k'\in \N, \int_{\partial \O}\phi_k\phi_{k'}=\delta_{k,k'}.\end{equation} 
Alternatively, we can define, for any $k\in \N$, $\sigma_k$ via the min-max formula
\begin{equation}
\sigma_k:=\min_{\substack{S\text{ subspace of dim }\\ k+1 \text{ of $W^{1,2}(\O)$}}}\max_{v\in S\backslash\{0\}}\frac{\int_\O |\n v|^2+\int_{\partial \O}\b v^2}{\int_{\partial \O}v^2}.
\end{equation}

 \section{Proof of Theorem \ref{Th:BgbgDist}}\label{Ap:Dist}
We omit the subscript in $\mathcal J_{\partial \O}$ and simply write 
$$\mathcal J(\beta):=\int_{ \O} j(u_\beta).$$ 
To prove Theorem \ref{Th:BgbgDist}, we simply need to obtain a lower estimate of the second order derivative of the type given in Proposition \ref{Pr:Rayleigh}. Indeed, the rest of the proof can be adapted verbatim.
We thus need  the  derivatives of the functional under consideration.The first and second order derivatives of $\beta\mapsto u_\beta$ are still denoted by $\dot u_\beta$ and $\ddot u_\beta$.  The equations on $\dot u_\beta$ and $\ddot u_\beta$ remain the same as in the proof of Theorem \ref{Th:BgbgBound}: $\dot u_\beta$ solves \eqref{Eq:DotuBound}, while $\ddot u_\beta$ satisfies \eqref{Eq:DdotuBound}. We now compute the derivatives of $\mathcal J$; they write:
\begin{equation}\label{Eq:SVDist}\dot{\mathcal J}(\beta)[h]=\int_\O \dot u_\beta j'(u_\beta)\text{ and } \ddot{\mathcal J}(\beta)[h,h]=\int_{ \O}\ddot u_\beta j'(u_\beta)+\int_{ \O}\left(\dot u_\beta\right)^2 j''(u_\beta).
\end{equation}
We define the adjoint state $p_\beta$ as the unique  solution in $W^{1,2}(\O)$ of 
\begin{equation}\label{DistEq:Adjoint}
\begin{cases}
-\Delta p_\beta=j'(u_\beta)&\text{ in }\O, 
\\ \frac{\partial p_\beta}{\partial \nu}+\beta p_\beta=0&\text{ on }\partial \O.\end{cases}
\end{equation}
Since $\inf_\O u_\beta>0$ and since $j'>0$ on $\R_+^*$, the maximum principle entails \begin{equation}\label{DistEq:CG}\inf_{\overline \O}p_\beta>0.\end{equation}
If we multiply \eqref{DistEq:Adjoint} by $\dot u_\beta$ and \eqref{Eq:DotuBound} by $p_\beta$, integrating by parts leads to  
\begin{align*}
0&=\int_\O \n \dot u_\beta\cdot \n p_\beta+\int_{\partial \O}\beta p_\beta \dot u_\beta-\int_{ \O}j'(u_\beta)\dot u_\beta
\\0&=\int_\O \n \dot u_\beta\cdot \n p_\beta+\int_{\partial \O}\beta p_\beta \dot u_\beta+\int_{\partial \O} hu_\beta p_\beta 
\end{align*}
so that$$
\int_{ \O} j'(u_\beta)\dot u_\beta=-\int_{\partial \O}hu_\beta p_\beta.$$
This leads to
\begin{equation}\label{DistEq:OptO1}
\dot{\mathcal J}(\beta)[h]=-\int_{\partial \O}h u_\beta p_\beta.
\end{equation}

Similarly, we get
 \begin{equation}\label{DistEq:Opt2}\ddot{\mathcal J}(\beta)[h,h]=-2\int_{\partial \O}h\dot u_\beta p_\beta+\int_{ \O}j''(u_\beta)\left(\dot u_\beta\right)^2.
\end{equation}

We make a proof by contradiction: let  $\beta^*\in \mathcal B(\partial \O)$  be a maximiser such that the set \begin{equation}\label{DistEq:Anormal}\omega^*:=\{0<\beta^*<1\}\end{equation} has positive measure:
$$\H(\omega^*)>0.$$ Thus, for any admissible perturbation $h$ at $\b$ supported in $\omega^*$  we must have \begin{equation}\dot{\mathcal J}(\b)[h]=0.\end{equation}We now prove that there exists an admissible perturbation $h$ supported in $\omega^*$ such that 
\begin{equation}\ddot{\mathcal J}(\b)[h,h]>0.\end{equation}
 
 We aim at obtaining an expression of $\ddot{\mathcal J}$ that is similar to a Rayleigh quotient, since this is the main point of the proof.  We can show the following adaptation of Proposition \ref{Pr:Rayleigh} there exist three constants $A\,, B\,, C$ with $A>0$ such that 
\begin{equation}\label{DistEq:Vivaldi}
\ddot{\mathcal J} (\b)[h,h]\geq A\int_\O  |\n \dot u_\b|^2-B\Vert  \dot u_\b\Vert_{W^{1,2}(\O)}\Vert \dot u_\b\Vert_{L^2(\O)}  -C\int_{\partial \O}\dot u_\b^2.
\end{equation}

To that effect, we set
\begin{equation}{\mathcal W}(\b)[h,h]:=-2\int_{\partial \O} h\dot u_\b p_\b.\end{equation} Using the same computations as in the proof of Proposition \ref{Pr:Rayleigh}, if we set
$$\Psi_\b:=\frac{p_\b}{u_\b},$$ we get
\begin{equation}\mathcal W(\b)[h,h]=\int_{\partial \O}\Psi_\b  \left(\partial_\nu\left({\dot u_\b}^2\right)+2\b \dot u_\b^2\right).\end{equation}We note that $$
\inf_{\overline \O}\Psi_\b>0
$$ 
and that 

\begin{equation}\frac{\partial \Psi_\b}{\partial \nu}=\frac{\partial_\nu p_\b}{u_\b}-\Psi_\b \frac{\partial_\nu u_\b}{u_\b}=-\b \Psi_\b+\b\Psi_\b=0.\end{equation} 
We use identity \eqref{Eq:B} and get \begin{equation}\ddot{\mathcal J} (\b)[h,h]=2\int_\O \Psi_\b |\n \dot u_\b|^2-\int_\O {\dot u_\b^2}\Delta \Psi_\b+\int_{\partial \O}\left(2\b\Psi_\b+j''(u)\right) \dot u_\b^2.
\end{equation}

Since $\b$ and $u_\b$ belong to $L^\infty(\partial \O)$ and since $j\in \mathscr C^2$, there exists a constant $C$ independent of $h$ such that, for any admissible perturbation $h$ 
\begin{equation}
\int_{\partial \O}\left(2\b\Psi_\b+j''(u)\right)\left(\dot u_\b\right)^2\geq -C \int_{\partial \O } \dot u_\b^2.
\end{equation} The rest of the proof follows exactly the same lines: indeed, the rest of the proof of Theorem \ref{Th:BgbgBound} hinges upon the analysis of $\dot u_\b$, not on the fact that the criterion to optimise is distributed. The equation on $\dot u_\b$ remains unchanged, an so does the rest of the analysis.
 \section{Proof of Theorem \ref{Th:RelaxDist}}\label{Ap:RelaxDist}
We define
$$\mathcal J(\beta)=\int_{ \O}j(u_\beta).$$
We shall make use of the computations of Appendix \ref{Ap:Dist}.

We recall that the first order derivative of the criterion is given in \eqref{Eq:SVDist}. We set, for any $\beta\in \mathcal B(\partial \O)$, 
$$\Phi_\beta:=u_\beta p_\beta$$ where $p_\beta$ is the solution of \eqref{DistEq:Adjoint}. Therefore, for any admissible perturbation $h$ at a given $\beta$, there holds
\begin{equation}\dot{\mathcal J}(\beta)[h]=-\int_{\partial \O} h \Phi_\beta.\end{equation} 
Let $\beta^*$ be  a solution of \eqref{Eq:MinDist}. Since we must have 
$$-\dot{\mathcal J}(\beta^*)[h]=\int_{\partial \O}\Phi_{\beta^*}h\leq 0$$ for any admissible perturbation $h$ at $\beta^*$, there exists a real number $\lambda$ (necessarily positive as $\Phi_\beta>0$) such that 
\begin{enumerate}
\item $\{0<\beta^*<1\}\subset\left\{\Phi_{\beta^*}=\lambda\right\}$,
\item $\{\beta^*=1\}\subset \{\Phi_{\beta^*}\geq \lambda\}$,
\item $\{\beta^*=0\}\subset \{\Phi_{\beta^*}\leq \lambda\}$.
\end{enumerate}
As in the proof of Theorem \ref{Th:RelaxBound}, we show that these conditions imply

\begin{equation}\label{AnDist:Yellow}
\H\left(\{\beta^*=0\}\right)=0.
\end{equation} The required conclusion then follows.

To prove \eqref{AnDist:Yellow}, let us first observe that 
\begin{equation}\label{AnDist:10}
\n\Phi_{\beta^*}=u_\b\n p_\b+p_\b\n u_\b, -\Delta \Phi_\b=-p_\b\Delta u_\b-u_\b \Delta p_\b-2 \n u_\b\cdot \n p_\b.
\end{equation}
First, we set $$B:=-2\frac{\n p_\b}{p_\b}$$ and $$V:=\frac{f}{u_\b}+\frac{j'(u_\b)}{p_\b}+2\frac{|\n p_\b|^2}{p_\b^2}.$$ Second, observe that
$$
\n u_\b=\frac{\n \Phi_\b}{p_\b}-\frac{u_\b}{p_\b}\n p_\b=\frac{\n \Phi_\b}{p_\b}-\frac{\Phi_\b}{p_\b^2}\n p_\b.
$$ 
Plugging this expression into \eqref{AnDist:10} yields
\begin{align*}
-\Delta \Phi_\b&=fp_\b+j'(u_\b)u_\b-2\left(\frac{\n \Phi_\b}{p_\b}-\frac{\Phi_\b}{p_\b^2}\n p_\b\right)\cdot \n p_\b \\
&=\Phi_\b \left(\frac{f}{u_\b}+\frac{j'(u_\b)}{p_\b}\right)+2\Phi_\b\frac{|\n p_\b|^2}{p_\b^2}+ \langle\n \Phi_\b, B\rangle
\\&=V\Phi_\b +\langle \n \Phi_\b, B\rangle.
\end{align*}
Since $\inf_\O \min\{u_\b, p_\b\}>0$ it follows that $\inf_\O \Phi_\b\geq 0$. We also have $V\geq 0$. We ccan then follow the proof of Theorem \ref{Th:RelaxBound} verbatim.
 \section{Proof of Theorem \ref{Theo:NL}}\label{Ap:NL}
 We recall that
$$\mathcal R(\beta):=\int_{ \O} j(y_\beta).$$ 
To prove Theorem \ref{Theo:NL}, we  need a lower estimate of the second order derivative of the type given in Proposition \ref{Pr:Rayleigh}; in a second step, we will need a set of eigenfunctions different from the ones used in the proofs of Theorem \ref{Th:BgbgBound}. Although not straightforward, we show how this can be done.

We start by computing the derivatives of the criterion. The first and second order derivatives of $\beta\mapsto y_\beta$ are denoted by $\dot y_\beta$ and $\ddot y_\beta$. By Assumptions \eqref{Eq:Hg}-\eqref{Eq:HStab}, these derivatives exist. Furthermore, $\dot y_\beta$ satisfies
\begin{equation}\label{NLEq:DotyBound}\begin{cases}
-\Delta \dot y_\beta=\frac{\partial g}{\partial u}(x,y_\beta)\dot y_\beta&\text{ in }\O, 
\\ \frac{\partial \dot y_\beta}{\partial \nu}+\beta \dot y_\beta=-hy_\beta &\text{ on }\partial \O,
\end{cases}
\end{equation}
while $\ddot y_\beta$ satisfies
\begin{equation}\label{NLEq:DdotyBound}\begin{cases}
-\Delta \ddot y_\beta=\frac{\partial^2g}{\partial u^2}(x,y_\beta)\left(\dot y_\beta\right)^2+\frac{\partial g}{\partial u}(x,y_\beta)\ddot y_\beta&\text{ in }\O, 
\\ \frac{\partial \ddot y_\beta}{\partial \nu}+\beta \ddot y_\beta=-2h\dot y_\beta &\text{ on }\partial \O.
\end{cases}
\end{equation}

 We now compute the derivatives of $\mathcal R$; they write:
\begin{equation}\label{NLEq:SVDist}\dot{\mathcal R}(\beta)[h]=\int_\O \dot y_\beta j'(y_\beta)\text{ and } \ddot{\mathcal R}(\beta)[h,h]=\int_{ \O}\ddot y_\beta j'(y_\beta)+\int_{ \O}\left(\dot y_\beta\right)^2 j''(y_\beta).
\end{equation}
We define the adjoint state $p_\beta$ as the unique  solution in $W^{1,2}(\O)$ of 
\begin{equation}\label{NLEq:Adjoint}
\begin{cases}
-\Delta p_\beta=\frac{\partial g}{\partial u}(x,y_\beta)p_\beta+j'(y_\beta)&\text{ in }\O, 
\\ \frac{\partial p_\beta}{\partial \nu}+\beta p_\beta=0&\text{ on }\partial \O.\end{cases}
\end{equation}

We need to check that a solution to this equation indeed exists.
\begin{lemma}\label{NLLe:Exist}
There exists a unique solution $p_\beta\in W^{1,2}(\O)$ of \eqref{NLEq:Adjoint}. For any $p\in [1;+\infty)\,, p_\beta\in W^{1,p}(\O)$ and 
$$\inf_{\overline \O}p_\beta>0.$$
\end{lemma}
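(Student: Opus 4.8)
The plan is to obtain existence and uniqueness through the Lax--Milgram theorem applied to the symmetric bilinear form naturally attached to \eqref{NLEq:Adjoint}, namely
\[
a(u,v):=\int_\O \n u\cdot \n v-\int_\O \frac{\partial g}{\partial u}(\cdot,y_\beta)uv+\int_{\partial \O}\beta uv,
\]
the weak formulation being $a(p_\beta,v)=\int_\O j'(y_\beta)v$ for all $v\in W^{1,2}(\O)$. Continuity of $a$ is immediate once we observe that $\frac{\partial g}{\partial u}(\cdot,y_\beta)\in L^\infty(\O)$: indeed \eqref{Eq:HReg} and Sobolev embeddings give $y_\beta\in L^\infty(\O)$, and $g$ being $\mathscr C^2$ in its second variable makes the potential bounded. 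The linear form $v\mapsto \int_\O j'(y_\beta)v$ is continuous on $W^{1,2}(\O)$ since $j'(y_\beta)\in L^\infty(\O)\subset L^2(\O)$.

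The only genuine difficulty is coercivity, because the potential $\frac{\partial g}{\partial u}(\cdot,y_\beta)$ has no definite sign; this is exactly where the stability assumption \eqref{Eq:HStab} must be exploited. By the definition \eqref{Eq:DefMu} of $\mu_\beta$ we have $a(\p,\p)\geq \mu_\beta\int_\O \p^2$ for every $\p\in W^{1,2}(\O)$, with $\mu_\beta>0$. Writing $M:=\|\frac{\partial g}{\partial u}(\cdot,y_\beta)\|_{L^\infty(\O)}$ and using $\int_{\partial\O}\beta\p^2\geq 0$, we also have the crude bound $a(\p,\p)\geq \int_\O|\n\p|^2-M\int_\O\p^2$. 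For $\theta\in(0,1)$, combining $\theta$ times the first estimate with $(1-\theta)$ times the second yields
\[
a(\p,\p)\geq (1-\theta)\int_\O |\n\p|^2+\bigl(\theta\mu_\beta-(1-\theta)M\bigr)\int_\O\p^2.
\]
Choosing $\theta\in\bigl(\tfrac{M}{M+\mu_\beta},1\bigr)$ makes both coefficients positive, so $a$ is coercive on $W^{1,2}(\O)$ and Lax--Milgram provides a unique $p_\beta\in W^{1,2}(\O)$. For the $W^{1,p}$ regularity I would argue exactly as in Lemma \ref{Le:RegRobin}: viewing \eqref{NLEq:Adjoint} as a Neumann problem with interior source $\frac{\partial g}{\partial u}(\cdot,y_\beta)p_\beta+j'(y_\beta)\in L^2(\O)$ and boundary data $\partial_\nu p_\beta=-\beta p_\beta$, the trace improves through $W^{1,2}(\O)\hookrightarrow W^{1/2,2}(\partial\O)\hookrightarrow L^{2^*_{\partial\O}}(\partial\O)$; since the potential is bounded, multiplying by it loses no integrability, so the very same bootstrap sequence $\{q_k\}$ as in Lemma \ref{Le:RegRobin} runs to completion and gives $p_\beta\in W^{1,p}(\O)$ for every $p\in[1;+\infty)$, hence $p_\beta\in \mathscr C^0(\overline\O)$ by Sobolev embedding.

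Finally, for positivity I would first show $p_\beta\geq 0$ by testing the weak formulation against $p_\beta^-:=\max(-p_\beta,0)$. Since $\n p_\beta^+$ and $\n p_\beta^-$ have disjoint supports (and likewise for the zeroth-order and boundary products), one gets $a(p_\beta,p_\beta^-)=-a(p_\beta^-,p_\beta^-)$, whereas the right-hand side $\int_\O j'(y_\beta)p_\beta^-\geq 0$ because $j'(y_\beta)\geq 0$. Coercivity then forces $a(p_\beta^-,p_\beta^-)\leq 0$, hence $p_\beta^-=0$. Moreover $j'(y_\beta)\not\equiv 0$ (as $y_\beta\geq 0$, $y_\beta\not\equiv 0$ and $j'>0$ on $\R_+^*$), so $p_\beta\not\equiv 0$. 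To upgrade to a strict bound I would rewrite the equation as $-\Delta p_\beta+Mp_\beta=\bigl(M+\frac{\partial g}{\partial u}(\cdot,y_\beta)\bigr)p_\beta+j'(y_\beta)\geq 0$; the strong maximum principle for $-\Delta+M$ gives $p_\beta>0$ in $\O$, and Hopf's lemma combined with the Robin condition rules out a boundary zero (a boundary minimum with value $0$ would force $\partial_\nu p_\beta(x_0)<0$ while the boundary condition imposes $\partial_\nu p_\beta(x_0)=-\beta(x_0)p_\beta(x_0)=0$). Continuity on the compact set $\overline\O$ then yields $\inf_{\overline\O}p_\beta>0$. I expect the coercivity step to be the main obstacle: every other part is a routine transcription of the linear arguments, but coercivity genuinely requires the linear stability hypothesis \eqref{Eq:HStab}.
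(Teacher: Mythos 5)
Your proof is correct, and most of it follows the paper's own route: the $W^{1,p}$ regularity is obtained by the same bootstrap as in Lemma \ref{Le:RegRobin}, and the positivity is proved exactly as in the paper --- first $p_\beta^-=0$ by testing with the negative part (the paper phrases this via the variational characterisation \eqref{Eq:DefMu} of $\mu_\beta$, you phrase it via coercivity of $a$; these are the same inequality), then strict positivity on $\overline{\O}$ by the strong maximum principle and Hopf's lemma, which the paper compresses into ``it suffices to apply the classical maximum principle''. The genuine divergence is the existence--uniqueness step. The paper runs the direct method in the calculus of variations: using \eqref{Eq:HStab} it bounds the natural energy functional associated with \eqref{NLEq:Adjoint} from below and extracts a minimiser, and then proves uniqueness \emph{separately}, by observing that the difference of two solutions would be a nontrivial eigenfunction of $-\Delta-\frac{\partial g}{\partial u}(\cdot,y_\beta)$ (with Robin condition) associated with the eigenvalue $0$, contradicting $\mu_\beta>0$. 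You instead apply Lax--Milgram, and your convex-combination trick --- interpolating the stability bound $a(\p,\p)\geq\mu_\beta\Vert \p\Vert_{L^2(\O)}^2$ against the crude bound $a(\p,\p)\geq\Vert\n \p\Vert_{L^2(\O)}^2-M\Vert \p\Vert_{L^2(\O)}^2$ with $\theta\in\bigl(\tfrac{M}{M+\mu_\beta},1\bigr)$ --- upgrades the purely $L^2$ lower bound to genuine $W^{1,2}$-coercivity with an explicit constant. This buys existence \emph{and} uniqueness in one stroke, and it in fact makes rigorous a point the paper leaves implicit: the paper's stated lower bound on the energy only controls the $L^2$ norm, and passing from that to coercivity in $W^{1,2}$ is precisely your interpolation. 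What the paper's route buys in exchange is the spectral uniqueness argument, which is conceptually transparent and reusable in the rest of its nonlinear analysis. Both arguments lean on \eqref{Eq:HStab} in the same essential way, and both (yours explicitly, the paper's tacitly) use that $\frac{\partial g}{\partial u}(\cdot,y_\beta)\in L^\infty(\O)$, which follows as you say from \eqref{Eq:HReg}, Sobolev embeddings and the $\mathscr C^2$ regularity of $g$ in its second variable.
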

\begin{proof}[Proof of Lemma \ref{NLLe:Exist}]
By Assumption \eqref{Eq:HStab}, the energy functional 
$$E:W^{1,2}(\O)\ni p\mapsto \frac12\int_\O |\n p|^2-\frac12\int_\O\frac{\partial g}{\partial u}(x,y_\beta) p^2-\int_\O j'(y_\beta)p+\int_{\partial \O}\beta p^2$$ can be bounded from below as 
$$E(p)\geq \mu_\beta \int_\O p^2-\int_\O \frac{\partial g}{\partial u}(x,y_\beta)p.$$ Hence, $E$ is coercive, and so a solution $p_\b$ to \eqref{NLEq:Adjoint} exists. To prove the uniqueness of this solution, we argue by contradiction: if two solutions $p_\beta\,, q_\beta$ exist, then $z_\beta:=p_\beta-q_\beta$ solves
\begin{equation}\begin{cases}
-\Delta z_\beta=\frac{\partial g}{\partial u}(x,y_\beta)z_\beta&\text{ in }\O\,, 
\\ \frac{\partial z_\beta}{\partial \nu}+\beta z_\beta=0&\text{ on }\partial \O.\end{cases}\end{equation} Hence, if $p_\beta\neq q_\beta$, $z_\beta\neq 0$ is an eigenfunction of the operator $-\Delta-\frac{\partial g}{\partial u}(x,y_\beta)$, associated with the eigenvalue 0. However, the lowest eigenvalue of this operator is $\mu_\beta>0$, a contradiction. Uniqueness follows.

The $W^{1,p}$-regularity of $p_\beta$ is a consequence of the same arguments as in Lemma \ref{Le:RegRobin}.

Finally, the positivity of $p_\beta$ is a consequence of the following version of the maximum principle: as $\mu_\beta>0$, should $p_\beta$ not be positive, the negative part $p_\beta^-:=-p_\beta\mathds 1_{p_\beta}$ satisfies
$$\int_\O |\n p_\beta^-|^2-\int_\O\frac{\partial g}{\partial u}(x,y_\beta)(p_\beta^-)^2\leq -\int_\O j'(y_\beta)p_\beta^-\leq 0.$$ By the variational formulation \eqref{Eq:DefMu} of $\mu_\beta>0$, we necessarily have $p_\beta^-=0$. So we first have $p_\beta^-\geq 0$. It then suffices to apply the classical maximum principle to conclude.
\end{proof}

If we multiply \eqref{NLEq:Adjoint} by $\dot y_\beta$ and \eqref{NLEq:DotyBound} by $p_\beta$, integrating by parts leads to  
\begin{align*}
0&=\int_\O \n \dot y_\beta\cdot \n p_\beta-\int_\O\frac{\partial g}{\partial u}(x,y_\beta) p_\beta\dot y_\beta+ \int_{\partial \O}\beta p_\beta \dot y_\beta-\int_{ \O}j'(y_\beta)\dot y_\beta
\\0&=\int_\O \n \dot y_\beta\cdot \n p_\beta-\int_\O \frac{\partial g}{\partial u}(x,y_\beta)p_\beta\dot y_\beta+\int_{\partial \O}\beta p_\beta \dot y_\beta+\int_{\partial \O} hy_\beta p_\beta 
\end{align*}
so that$$
\int_{ \O} j'(y_\beta)\dot y_\beta=-\int_{\partial \O}hy_\beta p_\beta.$$
This leads to
\begin{equation}\label{NLEq:OptO1}
\dot{\mathcal R}(\beta)[h]=-\int_{\partial \O}h y_\beta p_\beta.
\end{equation}
Similarly, we get
 \begin{equation}\label{NLEq:Opt2}\ddot{\mathcal R}(\beta)[h,h]=-2\int_{\partial \O}h\dot y_\beta p_\beta+\int_\O \frac{\partial^2 g}{\partial u^2}(x,y_\beta)p_\beta\left(\dot y_\beta\right)^2+ \int_{ \O}j''(y_\beta)\left(\dot y_\beta\right)^2.
\end{equation}

We argue by contradiction: let  $\beta^*\in \mathcal B(\partial \O)$  be a maximiser such that the set \begin{equation}\label{NLEq:Anormal}\omega^*:=\{0<\beta^*<1\}\end{equation} has positive measure:
$$\H(\omega^*)>0.$$ Thus, for any admissible perturbation $h$ at $\b$ supported in $\omega^*$  we must have \begin{equation}\dot{\mathcal R}(\b)[h]=0.\end{equation}We now prove that there exists an admissible perturbation $h$ supported in $\omega^*$ such that 
\begin{equation}\ddot{\mathcal R}(\b)[h,h]>0.\end{equation}
 
 We aim at obtaining an expression of $\ddot{\mathcal R}$ that is similar to a Rayleigh quotient, since this is the main point of the proof.   We show the following adaptation of Proposition \ref{Pr:Rayleigh}: there exist $A>0$ and two constants $B\,, C>0$ such that \begin{equation}\label{NLEq:Vivaldi}
\ddot{\mathcal R} (\b)[h,h]\geq A\int_\O  |\n \dot y_\b|^2-B\Vert  \dot y_\b\Vert_{W^{1,2}(\O)}\Vert \dot y_\b\Vert_{L^2(\O)}  -C\int_{\partial \O}\dot y_\b^2.
\end{equation}

To that effect, we set
\begin{equation}{\mathcal W}(\b)[h,h]:=-2\int_{\partial \O} h\dot y_\b p_\b.\end{equation} As before, if we set $$\Psi_\b:=\frac{p_\b}{u_\b},$$ we obtain
\begin{equation}\mathcal W(\b)[h,h]=\int_{\partial \O}\Psi_\b  \left(\partial_\nu\left({\dot y_\b}^2\right)+2\b \dot y_\b^2\right).\end{equation}We note that $$
\inf_{\overline \O}\Psi_\b>0
$$ 
and that 

\begin{equation}\frac{\partial \Psi_\b}{\partial \nu}=\frac{\partial_\nu p_\b}{u_\b}-\Psi_\b \frac{\partial_\nu u_\b}{u_\b}=-\b \Psi_\b+\b\Psi_\b=0.\end{equation} 
From \eqref{Eq:B} we are led to  \begin{equation}\ddot{\mathcal R} (\b)[h,h]=2\int_\O \Psi_\b |\n \dot y_\b|^2-\int_\O {\dot y_\b^2}\Delta \Psi_\b+\int_{\partial \O}\left(2\b\Psi_\b+j''(u)\right) \dot y_\b^2+\int_\O \frac{\partial^2 g}{\partial u^2}(x,y_\beta)p_\beta\left(\dot y_\beta\right)^2.
\end{equation}

Since $\b$ and $u_\b$ belong to $L^\infty(\partial \O)$, since $g\in \mathscr C^2$ in its second variable and since $j\in \mathscr C^2$, there exists a constant $C$ independent of $h$ such that, for any admissible perturbation $h$ 
\begin{align*}
\int_{\partial \O}\left(2\b\Psi_\b+j''(u)\right)\left(\dot y_\b\right)^2+\int_\O \frac{\partial^2 g}{\partial u^2}(x,y_\beta)p_\beta\left(\dot y_\beta\right)^2&\geq -C \left(\int_{\partial \O } \dot y_\b^2+\int_\O\dot y_\beta^2\right)
\\&\geq -C\int_{\partial \O}\dot y_\beta^2-C\Vert \dot y_\beta\Vert_{L^2(\O)}\Vert \dot y_\beta\Vert_{W^{1,2}(\O)}.
\end{align*} The rest of the proof follows exactly the same lines, except that, instead of considering the sequence of eigenelements defined in \eqref{def:phik}, we rather need the following: we consider the eigenvalues $$
0\leq \sigma_0\leq \sigma_1\leq \dots\leq \sigma_k\underset{k\to \infty}\rightarrow +\infty$$ where, for each $k$, the eigenvalue $\sigma_k$ is associated with the eigenfunction $\phi_k$ solution of 
\begin{equation}\label{NLEq:Phik}
\forall k \in \N\,, \begin{cases}-\Delta \phi_k=\frac{\partial g}{\partial u}(x,y_\beta)p_\beta&\text{ in }\O\,, 
\\ \frac{\partial \phi_k}{\partial \nu}+\beta \phi_k=\sigma_k\phi_k&\text{ on }\partial \O
\end{cases}
\text{ and, for any $k\,, k'\in \N\,, $} \int_{\partial \O}\phi_k\phi_{k'}=\delta_{k,k'}.\end{equation}
The one thing that needs to be checked is that these eigenelements are well defined. This is once again a consequence of the stability Assumption \eqref{Eq:HStab}: proceeding as in Appendix \ref{sec:appendSteklov}, it suffices to show that the operator $T:L^2(\partial \O)\ni f\mapsto T(f)$ defined, for any $f\in L^2(\partial \O)$, as

$$
\left.z_f\right|_{(\partial \O)}\text{ where $z_f$ is the unique solution of }\begin{cases}-\Delta z_f-\frac{\partial g}{\partial u}(x,y_\beta)z_f=0&\text{ in }\O, 
\\ \frac{\partial z_f}{\partial \nu}+\beta z_f=f&\text{ on }\partial \O\end{cases}
$$ 
is compact. First, we need to check that $T$ is well-defined. However, this follows from the same arguments as in Lemma \ref{NLLe:Exist}, considering this time the energy functional 
$$E:W^{1,2}(\O)\ni z\mapsto  \frac12\int_\O |\n z|^2-\frac12\int_\O\frac{\partial g}{\partial u}(x,y_\beta) z^2+\int_{\partial \O}\beta z^2-\int_{\partial \O}zf.$$ Second, the compactness is a consequence of standard $W^{1,2}$-estimates. 

We can hence define the eigenelements described in \eqref{NLEq:Phik}. The rest of the proof is adapted verbatim.

\section*{Acknowledgments}
The authors would like to warmly thank Dorin Bucur for fruitful discussions on shape optimization problems involving solutions of PDEs with Robin boundary conditions.

\nocite{}
\bibliographystyle{abbrv}
\bibliography{Biblio-Robin}

\textsc{Idriss Mazari}
\\CEREMADE, UMR CNRS 7534, Universit\'e Paris-Dauphine, Universit\'e PSL, Place du Mar\'echal De Lattre De Tassigny, 75775 Paris cedex 16, France, \\\texttt{mazari@ceremade.dauphine.fr}.

\textsc{Yannick Privat}
\\IRMA, Universit\'e de Strasbourg, CNRS UMR 7501, Inria, 7 rue Ren\'e Descartes, 67084 Strasbourg, France, 
\\{Institut Universitaire de France (IUF),} 
\\ {\tt yannick.privat@unistra.fr}.

\end{document}